   \definecolor{cites}{rgb}{0.50 , 0.00 , 0.00}  
   \definecolor{urls} {rgb}{0.00 , 0.00 , 0.50}  
   \definecolor{links}{rgb}{0.00 , 0.00 , 0.50}   
\newcommand\eps\varepsilon
\newcommand\ph\varphi
\newcommand\Spec{{\rm Spec}\,}  
\newcommand\Specn{{\rm Spec}}   
\newcommand\speps{{\rm spec}_\eps}
\newcommand\Speps{{\rm Spec}_\eps}
\newcommand\Lim{{\rm Lim}}
\newcommand\Stab{{\rm Stab}}
\newcommand\dist{{\rm dist}}
\newcommand\diag{{\rm diag}}
\newcommand\closn{{\rm clos}}
\newcommand\clos{\closn\,}
\newcommand\im{{\rm im\,}}
\newcommand\Op{{\rm Op}}
\newcommand\BO{{\rm BO}}
\newcommand\BDO{{\rm BDO}}
\newcommand\BS{{\rm BS}}
\newcommand\BDS{{\rm BDS}}
\newcommand\alg{{\rm alg}}
\newcommand\prop{{\rm prop}}
\newcommand\Lay{{\rm Lay}}
\newcommand\s{s}
\newcommand\dH{d_{\rm H}}
\newcommand\ot{\leftarrow}
\newcommand\Hto{
   \unitlength0.1ex
   \begin{picture}(30,15)
   \put(13,16){\makebox(0,0)[]{\tiny\rm H}}
   \put(15,5){\makebox(0,0)[]{$\to$}}
   \end{picture}
}
\newcommand\C{{\mathbb C}}
\newcommand\R{{\mathbb R}}
\newcommand\Z{{\mathbb Z}}
\newcommand\N{{\mathbb N}}
\newcommand\cI{{\mathcal I}}
\newcommand\cK{{\mathcal K}}
\newcommand\cL{{\mathcal L}}
\newcommand\cF{{\mathcal F}}
\newcommand\cS{{\mathcal S}}
\newcommand\cN{{\mathcal N}}
\newcommand\cB{{\mathcal B}}
\newcommand\cP{{\mathcal P}}
\newcommand\cFn{{\cF_\blacktriangledown}}
\newcommand\cNn{{\cN_\blacktriangledown}}
\newcommand\ci[1]{\text{\ding{\number\numexpr#1 + 171\relax}}}
\newtheorem{theorem}{Theorem}[section]
\newtheorem{lemma}[theorem]{Lemma}
\newtheorem{corollary}[theorem]{Corollary}
\newtheorem{proposition}[theorem]{Proposition}
\newtheorem{definition}[theorem]{Definition}
\newtheorem{convention}[theorem]{Convention}
\newenvironment{example}
 {\par\noindent\refstepcounter{theorem}{\bf Example \thetheorem}\ }
 {\raisebox{1mm}{\framebox{}}\pagebreak[2]}
\newenvironment{Proof}{%
  \begin{proof}[{\bf Proof}]%
}{
   \end{proof}
}
\newenvironment{remark}
  {\pushQED{\qed}\remarkx\normalfont}
  {\popQED\endremarkx}
\numberwithin{figure}{section}  
\newcounter{abccounter}
\newenvironment{abc}
  {\begin{list}{\bf \alph{abccounter})}{\usecounter{abccounter}\itemsep-1mm\topsep0mm\parskip1mm}}
  {\end{list}}
\let\@fnsymbol\@arabic
\begin{document}
\title{\bf Finite sections: stability, spectral pollution and\\asymptotics of condition numbers and pseudospectra}
\author{
{\sc Marko Lindner}\footnote{Maths Institute, TU Hamburg, Germany, \href{mailto:lindner@tuhh.de}{\tt lindner@tuhh.de}}
\quad and\quad
{\sc Dennis Schmeckpeper}\footnote{Maths Institute, TU Hamburg, Germany,
\href{mailto:dennis.schmeckpeper@tuhh.de}{\tt dennis.schmeckpeper@tuhh.de}}
}

\maketitle
\begin{center}
{\it Dedicated to Albrecht Böttcher on his 70th birthday. Herzlichen Glückwunsch, Albrecht!}
\end{center}

\begin{quote}
\renewcommand{\baselinestretch}{1.0}
\footnotesize {\sc Abstract.}
The stability of an approximating sequence $(A_n)$ for an operator $A$ usually requires, besides invertibility of $A$, the invertibility of further operators, say $B, C, \dots$, that are well-associated to the sequence $(A_n)$.
We study this set, $\{A,B,C,\dots\}$, of so-called {\sl stability indicators} of $(A_n)$ and connect it to the asymptotics of $\|A_n\|$, $\|A_n^{-1}\|$ and $\kappa(A_n)=\|A_n\|\|A_n^{-1}\|$ as well as to spectral pollution by showing that $\limsup\Speps A_n= \Speps A\cup\Speps B\cup\Speps C\cup\dots$. We further specify, for each of $\|A_n\|$, $\|A_n^{-1}\|$, $\kappa(A_n)$ and $\Speps A_n$, under which conditions even convergence applies.
\end{quote}

\noindent
{\it Mathematics subject classification (2020):} 47A10; Secondary 47A25, 47-08.\\
{\it Keywords and phrases:} stability, condition number, spectral pollution, pseudospectrum
\section{Introduction}
For the study of a linear operator $A$ on a Banach space $X$, one often has to resort to numerical approximation: Take a sequence $(A_n)_{n\in\N}$ of simpler operators approximating $A$ in some sense, $A_n\to A$, and study the quantity of interest -- say, the inverse -- of $A_n$ as $n\to\infty$ in place of $A$.

If the approximants $A_n$ are of finite rank, the convergence $A_n\to A$ is, as a rule, never in operator norm as the uniform limit of $A_n$ were compact, ruling out, for example, all invertible operators $A$. So $A_n\to A$ is typically weaker than uniform, making the question whether also
\begin{equation} \label{eq:conv.inv}
A_n^{-1}\to A^{-1}
\end{equation}
holds largely non-trivial. If we let $\to$ refer to pointwise operator convergence on $X$, that is $\|A_nx-Ax\|\to 0$ for all $x\in X$, then the convergence \eqref{eq:conv.inv} allows for asymptotically solving $Ax=b$ via $A_nx_n=b_n$ as $n\to\infty$, see e.g.~\cite{ProeSi,BoSi1,Bo94,BoSi2,HaRoSi2,RaRoSiBook,BoGru,Li:Book,SeiDiss,HagLiSei}.
\medskip

{\bf Stability.\ }
The key to \eqref{eq:conv.inv} is the so-called {\sl stability} of the sequence $(A_n)$, typically yielding
\\[-7mm]
\begin{itemize} \itemsep-1mm
\item existence of $A^{-1}$,
\item convergence of $A_n^{-1}$,
\item ... to $A^{-1}$.
\end{itemize}

By definition, an operator sequence $(A_n)_{n\in\N}$ is {\sl stable} if all but finitely many $A_n$ are invertible and their inverses are uniformly bounded. We can express this as follows:
\begin{equation} \label{eq:def.stab}
(A_n) \text{ is stable }
\quad:\!\iff\quad \limsup
\|A_n^{-1}\| < \infty,
\end{equation}
where we put $\|B^{-1}\|:=\infty$ if and only if $B$ is not invertible.

In $\S6$ of \cite{HagLiSei}, looking at the same equivalence \eqref{eq:def.stab}, the authors ask about quantities:

\noindent
\begin{tabular}{p{5mm}ll}
&(Q1)&How large is the $\limsup$ in \eqref{eq:def.stab}?\\ 
&(Q2)&Is it possibly a limit? \\
&(Q3)&What is the asymptotics of the condition numbers, $\kappa(A_n)=\|A_n\|\cdot\|A_n^{-1}\|$?\\
&(Q4)&What is the asymptotics of the pseudospectra of $A_n$?
\end{tabular}

We take this as a program for our paper here.
\medskip

{\bf Our results.\ }
For sequences $(A_n)$ in the finite section algebra of band-dominated (which are bounded but generally non-normal) operators $A$ on $X=\ell^p(\Z)$ with $p\in[1,\infty]$, we associate a set 
\[
\Stab\big(\,(A_n)_{n\in\N}\,\big)\ =:\ \Stab(A_n)
\]
of operators on $X$ -- the corresponding {\sl stability indicators} -- with each sequence $(A_n)$, with the property that $(A_n)$ is stable if and only if every $B\in\Stab(A_n)$ is invertible. We demonstrate how that same set, $\Stab(A_n)$, determines precise answers to each of (Q1) -- (Q4):
\begin{equation} \label{eq:results}
\left.
\begin{array}{rcl}
\limsup\|A_n\|&=&\max\limits_{B\in\Stab(A_n)}\|B\|\\[4mm]
\limsup\|A_n^{-1}\|&=&\max\limits_{B\in\Stab(A_n)}\|B^{-1}\|\\[4mm]
\limsup\Speps A_n&=&\bigcup\limits_{B\in\Stab(A_n)}\Speps B,\qquad\eps>0.
\end{array}
\qquad\right\}
\end{equation}
Via certain triples $(A,B,C)$ in $\Stab(A_n)$, we get a formula for $\limsup\kappa(A_n)$ and show, for each of our quantities, how also the answer to whether or not $\limsup=\lim$ is encoded in $\Stab(A_n)$.
\medskip

{\bf A remark on spectral pollution.\ }
In an ideal world, one would hope that $A_n\to A$ generally implies $\lim \Speps A_n=\Speps A$.
But the truth is: neither does the set sequence $\Speps A_n$ converge in Hausdorff sense as $n\to\infty$, whence we say ``$\limsup$'' instead of ``$\lim$'' in \eqref{eq:results}, nor is the result equal to $\Speps A$. Instead it is the union of $\Speps A$ with $\Speps B$ for all other stability indicators $B$ of $(A_n)$ (note that $A$ is always an element of $\Stab(A_n)$). So if, in practical computations, $\Speps A_n$ is found to approximate points that are far away from $\Speps A$, so-called {\sl spectral pollution}, then the last formula in \eqref{eq:results} exactly says who is to blame for this: some of the other $B\in\Stab(A_n)$ and their pseudospectra. A priori knowledge of $\Stab(A_n)$ may hence also help to classify and ignore spectral pollution.
\medskip

{\bf The operators.\ }
Thinking of an operator $A$ on $X:=\ell^p(\Z)$ with $p\in[1,\infty]$ as a bi-infinite matrix $(A_{ij})_{i,j\in\Z}$, we call $A$ a {\sl band operator}, $A\in\BO$, if its matrix is supported on finitely many diagonals only, and if $A$ is bounded as an operator on $X$. Then let $\BDO$, the set of {\sl band-dominated operators}, denote the closure of $\BO$ in the operator norm topology.
\medskip

{\bf Approximants: Pure and composed finite sections.\ }
For reasons of finite storage and computational cost, we approximate $A$, as $n\to\infty$, by finite matrices $A_n=(F_{ij})_{i,j=-n}^n$, each interpreted, via zero extension, as an operator on $\ell^p(\Z)$. For the precise construction of $A_n$ with $n\in\N$, let $P_n$ denote the operator of multiplication by the characteristic function of $\{-n,\ldots,n\}$ and suppose, as an example, that $A=BC+D$ with somehow simpler operators $B,C,D\in\BDO$. Then we either take\\[-7mm]
\begin{itemize}\itemsep0mm
\item {\sl pure finite sections}: $\ A_n:=P_nAP_n$ -- cutting $(A_{ij})_{i,j=-n}^n$ out of $A$, extended by zeros,\ or
\item {\sl composed finite sections}: $\ A_n:=(P_nBP_n)(P_nCP_n)+(P_nDP_n)$ -- the sum-product of pure finite sections corresponding to the decomposition $A=BC+D$.
\end{itemize}
The latter is an illustration of something much bigger: the set $\cS$ of all limits $(A_n)$ of finite sum-products of pure finite section sequences $(P_nBP_n)$ with $B\in\BDO$.
To make sense of the words ``limit'', ``sum'' and ``product'' in this operator sequence context, we introduce, in Section~\ref{sec:tools} below, a Banach algebra $\cF$ of bounded operator sequences together with the two-sided ideal, $\cN$, of null sequences in $\cF$. We will see that, in a sense, the quotient algebra $\cF/\cN$ is the playground for algebraic studies of stability.

We wish to add that cutting finite sections out of an infinite matrix is much more than a toy model. See e.g.~\cite{LiSeif:FSM} for why celebrated methods like Galerkin, Ritz-Galerkin, FEM and PCE are not more than special cases of pure finite sections in $\ell^2(\N)$.
\medskip

{\bf Semi-infinite matrices and their finite sections.\ }
Our focus on bi-infinite matrices $A$ and finite sections $A_n$ taken from $-n$ to $n$ is not a big restriction. If one is instead interested in a semi-infinite matrix $B$ and its pure finite sections $B_n$ from $1$ to $n$ then let $A$ be the bi-infinite extension of $B$ by $c$ times the identity, where $c>\|B\|$, so that $A_n$ is invertible if and only if $B_n$ is invertible and $\|A_n^{-1}\|=\max(c^{-1},\|B_n^{-1}\|)=\|B_n^{-1}\|$ since $c^{-1}<\|B_n\|^{-1}\le\|B_n^{-1}\|$ by $c>\|B\|\ge\|P_nBP_n\|=\|B_n\|$. So the stability of $(B_n)$ and the asymptotics of $\|B_n^{-1}\|$ are the same as those of $(A_n)$ and $\|A_n^{-1}\|$, respectively. The $\eps$-pseudospectra of $A_n$ are of course those of $B_n$ together with an $\eps$-ball around $c$. But the latter is isolated and easily ignored if $c$ is sufficiently large.
\medskip

{\bf Banach space-valued $\ell^p$ over $\Z^d$ with $p\in [1,\infty]$.\ }
We stick to scalar-valued $\ell^p(\Z)$ with $p\in(1,\infty)$ for much of the exposition, avoiding machinery that could easily hide the true plot of the paper. We explain how to deal with Banach space-valued $\ell^p(\Z^d)$ with $p\in [1,\infty]$ in Section~\ref{sec:Banach}.

Note that restriction to just $p=2$ in this earlier part would be a bit too narrow as it would motivate the following elegant argument (that we learnt from \cite{Bo94}) which is however not generalizable to $p\ne 2$: Since $(A_n)+\cN\mapsto \Stab(A_n)$ is an injective $~^*$-homomorphism between suitable $C^*$-algebras, preserving invertibility, it automatically preserves norms. The analysis of \cite{HagLiSei} shows norm-preservation by other means, not limited to $p=2$. 
\medskip

{\bf History: operator algebraists and stability.\ }
The study of stability, naturally claimed as home territory by numerical analysts, nevertheless carried out by means of operator algebraic arguments can be traced back to at least the 1970s: 

In 1974, Kozak showed in his PhD thesis \cite{Koz} that a bounded sequence $(A_n)$ is stable if and only if its coset is invertible in the quotient algebra $\cF/\cN$, the operator version of $\ell^\infty/c_0$.
This opened an extremely powerful and productive back door into numerical analysis for operator algebraists, see \cite{KozSim,ProeSi,BoSi1,Bo94,BoSi2,HaRoSi2,RoSantSi,BoGru} and many others. 

A related but not quite identic approach to stability is the following: In 1971 Douglas and Howe \cite{DouglasHowe} and later Gorodetski \cite{Goro} realized, in their study of multidimensional discrete convolutions with homogeneous symbol, that an approximating sequence $(A_n)$ of growing finite matrices is stable if and only if the direct sum, $\oplus A_n$, is a Fredholm operator. 
In fact, this approach is not too different from Kozaks: $\oplus A_n$ is a Fredholm operator if and only if its coset is invertible in the quotient algebra $L(X)/K(X)$ of bounded operators modulo compact operators on $X=\ell^p(\Z)$ (the so-called Calkin algebra), and for $\oplus A_n$ this perfectly translates to invertibility in $\cF/\cN$.

Later Lange and Rabinovich  \cite{LaRa} started a systematic study of Fredholm properties in $\BDO$ by so-called {\sl limit operators}. (This interplay and its consequences have been strengthened and simplified in \cite{RaRoSi1998,RaRoSiBook,Li:Book,CWLi:Memoir,Sei:Survey} and somehow finalized in \cite{LiSei:BigQuest,HagLiSei}.
Our approach here largely rests on these last two papers.) 
The main message is that $A\in\BDO$ is a Fredholm operator if and only if all its limit operators -- capturing the behaviour of $A$ at infinity in all kinds of directions -- are invertible. Denoting the family of all limit operators of $A$ by $\Lim(A)$, one can obviously connect these Fredholm studies with Douglas, Howe and Gorodetski's $\oplus A_n$ construction, showing that an operator sequence $(A_n)$ is stable if and only if every operator in $\Lim(\oplus A_n)$ is invertible. Removing some more or less obvious redundancies from $\Lim(\oplus A_n)$ if $(A_n)\in\cS$ then leads to the set $\Stab(A_n)$ that takes center stage in our paper.
\medskip

{\bf More history: stability indicators.\ }
Kozaks PhD thesis \cite{Koz} is a fore-runner also here. For convex polygons $\Omega\subset\R^2$ with vertices $v_1,\ldots,v_k\in\Z^2$, Kozak looked at the pure finite sections $A_n$ with respect to $(n\Omega)\cap\Z^2$ of discrete convolutions $A$ in $\ell^2(\Z^2)$. He showed that $(A_n)$ is stable if and only if the operators $B_1,\ldots, B_k$ are invertible, where $B_j$ is the compression of $A$ to the infinite cone that fits $\Omega$ at its corner $v_j$.
Böttcher and Silbermann \cite{BoSi1,BoSi2} famously took the 1D version of Kozaks result to both the algebra of Toeplitz operators and their finite section algebra, ending up with two stability indicators -- one for each endpoint of the interval. Moreover, they already arrived at all of the formulas \eqref{eq:results} for their particular situation.
Rabinovich, Roch and Silbermann \cite{RaRoSiBook} then even replaced the Toeplitz algebra by $\BDO$. 
Important other work is, for example, \cite{MascSantSei}, where, though limited to Hilbert space, the operator setting is even more general than $\BDO$ and \cite{SeiDiss,SeiSilb13}, where, based on deep localization techniques of \cite{SeiDiss}, all of \eqref{eq:results} are shown in the finite section algebra $\cS$ over $\BDO$.
\medskip

{\bf So what is different in our paper?\ }
Thanks to the quantitative results of \cite{HagLiSei}, we can leave the Hilbert space / C$^*$-algebra setting of \cite{RaRoSiBook,MascSantSei} behind. Thanks to \cite{LiSei:BigQuest,HagLiSei}, we can replace \cite{RaRoSiBook}'s $\sup$'s by $\max$'s, drop a closure in the pseudospectral formula and hugely simplify the proofs. Based on \cite{RaRoSi:FSMsubs} and \cite{Li:FSMsubs}, we can transfer the formulas \eqref{eq:results} to subsequences $(A_{n_k})$ and thereby elegantly identify subsequences with a different $\limsup$ or, in case of their absence, conclude that the $\limsup$ is a proper limit. And finally, our approach via $\Lim(\oplus A_n)$, arguably simplifying the proofs and techniques, allows to generalize the results of \cite{SeiDiss,SeiSilb13} from $(A_n)\in\cS$ to the much larger sequence algebra $\BDS$.

\section{Two examples}
Before we give the proper details, let us get into the right spirit via two examples.

\begin{example} \label{ex:Laurent} {\bf (Laurent operator)\ }
We start with a banded Laurent operator, that is, $L=(L_{ij})_{i,j\in\Z}=(a_{i-j})_{i,j\in\Z}$ with constant diagonals, finitely many of them nonzero, i.e.~$a\in c_{00}(\Z)$, acting boundedly on $\ell^p(\Z)$. For the pure finite sections, $L_n=P_nLP_n$, it is easily shown that
\[
\Stab(L_n)=\{L,T,\tilde T\} \quad\text{with}\quad 
T=(L_{ij})_{i,j\in\N_0},\quad\tilde T=(L_{ij})_{i,j\in-\N_0}\quad\text{and}\quad \N_0=\N\cup\{0\}.
\]
\begin{minipage}{117mm}
$T$ is what $L_n$ (interpreted as the  finite matrix $(L_{ij})_{i,j=-n}^n$) asymptotically, as $n\to\infty$, looks like from the perspective of its top left corner, and $\tilde T$ is the same kind of limit when watching $L_n$ grow from its lower right corner. $L$ is the limit when focussing at $(0,0)$, while $n\to\infty$.
We can deal with this mix of bi-infinite and semi-infinite matrices in one set, also with $T$ and $\tilde T$ facing in different directions.
\end{minipage}
\begin{minipage}{43mm}
\begin{flushright}
\begin{tikzpicture}[scale=0.21]
\fill[gray!60] (0,9.5) -- (0,10) -- (0.5,10) -- (10,0.5) -- (10,0) -- (9.5,0);
\fill[gray!30] (0,8.2) -- (0,9.2) -- (9.2,0) -- (8.2,0);
\fill[gray!20] (0,6.9) -- (0,7.9) -- (7.9,0) -- ( 6.9,0);
\fill[gray!50] (0.8,10) -- (1.8,10) -- (10,1.8) -- (10,0.8);
\fill[gray!20] (2.1,10) -- (3.1,10) -- (10,3.1) -- (10,2.1);
\node at (-0.4,5)[left] {\Large $L_n=$};
\draw [black,line width=0.5mm] (0,0) rectangle (10,10); 
\draw[fill] (0,10) circle (4mm); \node at (-0.5,9.8)[left] {$T$};
\draw[fill] (10,0) circle (4mm); \node at (10.5,0.3)[right] {$\tilde T$};
\draw[fill] (5,5) circle (4mm); \node at (4.7,5)[left] {$L$};
\end{tikzpicture}
\end{flushright}
\end{minipage}

So $(L_n)$ has three stability indicators: $L$, $T$ and $\tilde T$. Their invertibility is sufficient and necessary for the stability of $(L_n)$. 
$L$ itself is in fact redundant in this set: $\|L\|\le\|T\|$ (actually equality) and $\|L^{-1}\|\le\|T^{-1}\|$, so that invertibility of $T$ implies that of $L$. We will say that $L$ is dominated\footnote{Between $T$ and $\tilde T$, the situation is not so clear: Although, as is easily seen, $\|T\|=\|L\|=\|\tilde T\|$ holds, it is possible that $\|T^{-1}\|\ne\|\tilde T^{-1}\|$,
see \cite[Ex.~6.6]{BoGru}. This is because, after flipping both rows and columns around, $\tilde T$ is the transpose of $T$, acting on $\ell^q$ with $1/p+1/q=1$, not on $\ell^p$ (unless $p=q=2$). } by $T$. Also $L-\lambda I$ is dominated by $T-\lambda I$ for $\lambda\in\C$, whence $\Speps L\subseteq\Speps T$. All of these are limit operator arguments, see below.

Our formulas  \eqref{eq:results}, reproducing well-known results of \cite{Bo94,BoSi2,HaRoSi2,BoGru}, show that $\|L_n\|\to\|L\|$ and
\begin{eqnarray*}
\limsup\|L_n^{-1}\| &=& \max( \|L^{-1}\|, \|T^{-1}\|,\ \|\tilde T^{-1}\|)\ =\ \max( \|T^{-1}\|,\ \|\tilde T^{-1}\|),\\
\limsup\kappa (L_n) &=& \|L\|\cdot \max( \|T^{-1}\|,\ \|\tilde T^{-1}\|)\ =\ \max(\kappa(T),\ \kappa(\tilde T)),\\
\limsup\Speps L_n &=& \Speps L\ \cup\ \Speps T\ \cup\ \Speps \tilde T\ =\ \Speps T\ \cup\ \Speps \tilde T.
\end{eqnarray*}
By Example \ref{ex:L-FSalg}, reproducing results of \cite{Bo94,BoSi2,HaRoSi2,BoGru} for sequences in the algebra of finite sections of banded Laurent operators, all three $\limsup$'s are proper limits. 
In case $p=2$, $\|T^{-1}\|=\|\tilde T^{-1}\|$ and $\Speps T=\Speps\tilde T$ hold, leading to further simplifications.
\end{example}

\begin{example} \label{ex:symm_blockflip} {\bf (symmetric block-flip)\ }
Our next example, also see \cite[Ex.\,6.3]{HagLiSei}, is 
\[
F\ =\ \diag(\cdots,B,B,\boxed 1,B,B,\cdots),\quad\textrm{where\ }
B\ =\ \left(\begin{array}{cc}\mu&1\\1&\mu\end{array}\right),
\quad\mu\in[0,1)
\quad\text{and}\quad\boxed 1=F_{00}.
\]
The pure finite sections, $F_n=P_nFP_n$, correspond to the finite $(2n+1) \times (2n+1)$ matrices
\[
F_{n}\ =\ \left\{
\begin{array}{cl}
\diag\big(B,\cdots,B,\boxed 1,B,\cdots,B\big) & \textrm{if $n\ge 2$ is even},\\
\diag\big(\mu,B,\cdots,B,\boxed 1,B,\cdots,B, \mu\big) & \textrm{if $n\ge 3$ is odd.}
\end{array}\right.
\]
Looking at the singular values, $1-\mu$ and $1+\mu$, of $B$, we conclude that $\|B\|=\|B\|_2=1+\mu$ and $\|B^{-1}\|=\|B^{-1}\|_2=(1-\mu)^{-1}$, whence,
\[
\begin{array}{|c|c|}
\hline
\text{if $n\ge 2$ is even then} &\text{if $n\ge 3$ is odd then}\\
\hline
\rule{55mm}{0pt} & \rule{95mm}{0pt}\\[-3mm]
\|F_{n}^{-1}\|=\|B^{-1}\|=(1-\mu)^{-1} ,& \|F_{n}^{-1}\|=\max\left(\|B^{-1}\|,\mu^{-1},1\right)=\max\left((1-\mu)^{-1},\mu^{-1}\right),\\[1mm]
\kappa(F_n)=(1+\mu)(1-\mu)^{-1}, & \kappa(F_n)=(1+\mu)\max\left((1-\mu)^{-1},\mu^{-1}\right),\\[1mm]
\Spec F_n=\{\mu\pm 1,\,1\}, & \Spec F_n=\{\mu\pm 1,\,1,\,\mu\}.\\[1mm]
\hline
\end{array}
\]
The sequences $\|F_n^{-1}\|$ and $\kappa(F_n)$ are convergent (in fact, constant) if and only if $\mu^{-1}\le(1-\mu)^{-1}$, i.e.~if $\mu\in [\frac12,1)$. The pseudospectra are the $\eps$-neigbourhoods of the spectra (selfadjoint case).

In this example, $(F_n)$ has five stability indicators; they are 
$\Stab(F_n)=\{F,C,D,\tilde C,\tilde D\}$ with
\[
C=\diag(B,B,\dots),\ D=\diag(\mu,B,B,\dots),\ \tilde C=\diag(\dots,B,B),\ \tilde D=\diag(\dots,B,B,\mu),
\]
so that $\|F^{-1}\|=\|B^{-1}\|=\|C^{-1}\|=\|\tilde C^{-1}\|=(1-\mu)^{-1}$ and $\|D^{-1}\|=\|\tilde D^{-1}\|=(\min(1-\mu,\mu))^{-1}$.
The limits $C$ and $\tilde C$ arise from the perspective of the top left, resp.~lower right, corner of the matrices $(F_{2k})$ as $k\to\infty$. $D$ and $\tilde D$ are the same limits for $(F_{2k+1})$ as $k\to\infty$. In Section~\ref{sec:subseq} below we associate the subset $\{F,C,\tilde C\}$ of $\Stab(F_n)$ with the subsequence $(F_{2k})$ of $(F_n)$ and $\{F,D,\tilde D\}$ with the subsequence $(F_{2k+1})$.

Note that these subsets of stability indicators and their norms and norms of inverses help to quantify the $\limsup$'s of $\|F_n\|$, $\|F_n^{-1}\|$ and $\kappa(F_n)$ 
for the corresponding subsequences and hence to tell stable from unstable subsequences (look at even vs.~odd $n$ in the case $\mu=0$).
\end{example}

\section{Tools and notations} \label{sec:tools}

{\bf Lower norm, spectrum and pseudospectrum.\ }
Let $A$ be a bounded linear operator on a Banach space $X$.
A fairly convenient access to the norm of $A^{-1}$ is by the so-called {\sl lower norm}, the number
\begin{equation}\label{eq:nu}
\nu(A)\ :=\ \inf_{\|x\|=1}\|Ax\|.
\end{equation}
Indeed, putting 
\begin{equation}\label{eq:invmu}
\mu(A):=\min\{\nu(A),\,\nu(A^*)\},
\qquad\text{we have}\qquad
\|A^{-1}\|\ \ =\ 1/\mu(A),
\end{equation}
where $A^*$ is the Banach space adjoint on the dual space $X^*$ and equation \eqref{eq:invmu} takes the form $\infty=1/0$ if and only if $A$ is not invertible. 

This enables us to write the {\sl pseudospectra} \cite{TrefEmb} of $A$ as sublevel sets of $\mu$,
\[
\textstyle
\speps A\ :=\ \{\lambda\in\C: \|(A-\lambda I)^{-1}\|>\frac 1\eps \}
\ =\ \{\lambda\in\C: \mu(A-\lambda I)<\eps \},\qquad \eps>0
\]
and
\[
\textstyle
\Speps A\ :=\ \{\lambda\in\C: \|(A-\lambda I)^{-1}\|\ge\frac 1\eps \}
\ =\ \{\lambda\in\C: \mu(A-\lambda I)\le\eps \},\qquad \eps\ge 0.
\]
In particular, the {\sl spectrum} of $A$ is part of this family:
\[
\Spec A\ =\ \Specn_0 A\ \subset \speps A\ \subset\ \Speps A\ =\ \clos\,\speps A,\qquad \eps>0,
\]
where the last equality holds by results of Globevnik \cite{Globevnik} and Shargorodsky \cite{Shargorodsky08} if $X=\ell^p(\Z)$.
\medskip

{\bf Set sequences, Hausdorff-convergence and spectral pollution.\ }

For two bounded nonempty sets, $S,T\subset\C$, the expression
\[
\dH(S,T)\ :=\ \max\left( \sup_{s\in S} \dist(s,T)\,,\, \sup_{t\in T}\dist(t,S)\right),
\quad\text{where}\quad
\dist(s,T):=\inf_{t\in T}|s-t|,
\]
denotes the {\sl Hausdorff distance} of $S$ and $T$. On the set of all compact subsets of $\C$, $\dH$ is a metric; on the bounded subsets of $\C$ it is merely a pseudometric since $\dH(S,T)=\dH(\clos S,T)$, so that $\dH(S,T)=0$ iff $\clos S=\clos T$ but not necessarily $S=T$. 

One says that $T_n$ {\sl Hausdorff-converges} to $T$, written $T_n\Hto T$, if $\dH(T_n,T)\to 0$, noting that the limit $T$ is only unique after passing to its closure. 

For a bounded sequence $(T_n)$, Hausdorff-convergent or not, look at the sets $\limsup T_n$, the set of all partial limits of sequences $(t_n)$ with $t_n\in T_n$, and $\liminf T_n$, the set of all limits of sequences $(t_n)$ with $t_n\in T_n$. By the Hausdorff theorem, e.g.\cite[\S3.1.2]{HaRoSi2}, $T_n\Hto T$ if and only if
\[
\liminf T_n\ =\ \limsup T_n\ =\ \clos T.
\]
When aiming to approximate a set $T$ by sets $T_n$, the Hausdorff distance $\dH(T_n,T)$
detects two kinds of failure: $(i)$ $T_n$ failing to approximate parts of $T$ and $(ii)$ $T_n$ clustering at points far from $T$. The latter is known as {\sl spectral pollution}.

{\bf Some helpful short notations.\ }
For $a,b\in\Z$ with $a\le b$, let us write
\[
a..b\ :=\ \{z\in\Z:a\le z\le b\},\quad
a..\ :=\ \{z\in\Z:a\le z\},\quad\text{and}\quad
..b\ :=\ \{z\in\Z: z\le b\}.
\]
For a set $M\subset\Z^d$, let $P_M$ denote the operator on $\ell^p(\Z^d)$ that mulitplies by the characteristic function of $M$.
In particular, for $d=1$, put $P_+:=P_{0..}$, $P_-:=P_{..0}$ and $P_n:=P_{-n..n}$ with $n\in\N$.
\medskip

{\bf Algebras of operators and matrices.\ }
Identifying operators on $X:=\ell^p(\Z^d)$, $d\in\N$, with $\Z^d\times\Z^d$-matrices is almost straightforward: 
With an operator $A$ on $X$, associate the infinite matrix $[A]=(A_{ij})_{i,j\in\Z^d}$
with $A_{ij}\in\C$ given by $P_{\{i\}}AP_{\{j\}}:\im P_{\{j\}}\to \im P_{\{i\}}$. 
Conversely,  a bi-infinite matrix $M=(M_{ij})_{i,j\in\Z^d}$ induces, via matrix-vector multiplication, an operator $\Op(M):(x_j)_{j\in\Z^d}\mapsto (\sum_{j\in\Z^d} M_{ij} x_j)_{i\in\Z^d}$, provided the sums converge. For certain classes of matrices $M$, $\Op(M)$ is a bounded operator on $X$, and $[\Op(M)]=M$. For $p<\infty$, also $\Op([A])=A$, see \cite[Prop.\,1.31 but also Ex.\,1.26 c]{Li:Book}. Now let us put
\begin{eqnarray*}
\BO &:=& \alg\{S_k,\, M_b\ :\ k\in\Z^d,\, b\in\ell^\infty(\Z^d)\}\qquad\text{and}\\
\BDO &:=& \clos_{L(X)} BO(X)\ =\ \clos\alg\{S_k,\, M_b\ :\ k\in\Z^d,\, b\in\ell^\infty(\Z^d)\},
\end{eqnarray*}
where $S_k$ is the $k$-shift on $X$ with $(S_kx)_{i+k}=x_i$ for $i\in\Z^d$,
$M_b$ is the operator of multiplication by $b$, $(M_bx)_i=b_ix_i$,
and $\alg\{\dots\}$ refers to the set (in fact, the algebra) of all finite sum-products of these.
We call elements of $\BO$ and $\BDO$, respectively, {\sl band operators} and {\sl band-dominated operators} \cite{RaRoSiBook,Li:Book}.
Moreover, let
\[
\prop(A)\ :=\ \sup\{|i-j|_\infty\ :\ i,j\in\Z^d,\ P_{\{i\}}AP_{\{j\}}\ne 0\}
\]
denote the {\sl propagation} of $A\in\BO$, a.k.a.~the {\sl bandwidth} of the corresponding matrix $[A]$.
\medskip

{\bf Algebras of operator sequences.\ }
Let $\cF$ be the set of all bounded sequences $(A_n)_{n\in\N}$ of bounded operators $A_n$ on $X:=\ell^p(\Z)$, and equip $\cF$ with elementwise operations, $\alpha(A_n):=(\alpha A_n)$, 
$(A_n)+(B_n):=(A_n+B_n)$ and $(A_n)(B_n):=(A_nB_n)$, and with norm $\|(A_n)\|_\cF:=\sup_n\|A_n\|$, turning $\cF$ into a Banach algebra. Let $\cN$ denote the closed ideal in $\cF$ of all sequences $(A_n)\in\cF$ with $\|A_n\|\to 0$. 
Ultimately, we want the $A_n$ to be finite matrices growing with $n$, so let us put
\begin{eqnarray*}
\cFn&:=&\{(P_nA_nP_n)_{n\in\N}\ :\ (A_n)\in\cF\},\\
\cNn&:=&\{(A_n)\in\cFn\ :\ \|A_n\|\to 0\},
\end{eqnarray*}
where now $A_n$ is interpreted as operator on $\ell^p(-n..n)$ when it comes to invertibility and inverses.
In analogy to the operator algebras $\BO$ and $\BDO$, now let
\begin{eqnarray*}
\BS&:=& \alg\{(S_k)_{n\in\N},\ (M_{b^{(n)}})_{n\in\N}\ :\ k\in\Z,\ b^{(n)}\in\ell^\infty(\Z),\ \sup\|b^{(n)}\|_\infty<\infty\}\ \cap\ \cFn,\\
\BDS&:=& \closn_\cF\,\BS\ =\ \clos\alg\{(S_k),\ (M_{b^{(n)}})\ :\ \dots\}\ \cap\ \cFn
\end{eqnarray*}
refer to the algebras of all {\sl band sequences} and all {\sl band-dominated sequences}, respectively. Being finite sum-products of $(S_k)_n$ and $(M_{b^{(n)}})_n$, band sequences $(A_n)\in\BS$ are exactly those bounded sequences of band operators $A_n\in\BO$ with the property that $\sup_n\prop(A_n)<\infty$. Finally, let
\[
\cS\ :=\ \clos\alg\{(P_nBP_n)\ :\ B\in\BDO\}
\]
denote the so-called {\sl finite section algebra} announced in the introduction.
$(A_n)\in\cS$ means that
\begin{equation} \label{eq:(An)alg}
(A_n)_{n\in\N}\ =\ \lim_{i\to\infty}\sum_{j\in J_i}\prod_{k\in K_i} (P_nA^{(i,j,k)}P_n)_{n\in\N}
\end{equation}
with finite sets $J_i,K_i\subseteq\N$ for each $i\in\N$, with topology, sum and product of $\cF$ and with all $A^{(i,j,k)}\in\BDO$. For each individual $A_n$ that means
\begin{equation} \label{eq:Analg}
A_n\ =\ \lim_{i\to\infty}\sum_{j\in J_i}\prod_{k\in K_i} (P_nA^{(i,j,k)}P_n),\qquad n\in\N,
\end{equation}
now with topology, sum and product of $L(X)$ and with the convergence $\lim_{i\to\infty}$ uniform in $n$.
Then $A_n\to A:=\lim_i\sum_j\prod_k A^{(i,j,k)}$, independently of the representation of $(A_n)$.
\begin{lemma}
It holds that $\cS\subset\BDS$.
\end{lemma}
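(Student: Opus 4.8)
The plan is to show the two generating families of $\cS$ live inside $\BDS$ and then invoke that $\BDS$ is a closed subalgebra of $\cF$. Concretely, $\cS=\clos\alg\{(P_nBP_n):B\in\BDO\}$, so it suffices to prove (i) that $(P_nBP_n)\in\BDS$ for every $B\in\BDO$, and (ii) that $\BDS$ is closed under the sum, product and norm-limit operations of $\cF$ — but (ii) is immediate since $\BDS=\closn_\cF\BS$ is by definition a closed subalgebra of $\cF$. Hence the whole content is (i).

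For (i), first I would reduce to band operators. Given $B\in\BDO$, pick band operators $B^{(m)}\in\BO$ with $\|B^{(m)}-B\|\to 0$. Then $(P_nB^{(m)}P_n)_{n}-(P_nBP_n)_n=(P_n(B^{(m)}-B)P_n)_n$ has $\cF$-norm at most $\|B^{(m)}-B\|\to 0$, so $(P_nBP_n)_n$ is an $\cF$-limit of the sequences $(P_nB^{(m)}P_n)_n$. Since $\BDS$ is $\cF$-closed, it is enough to check $(P_nB'P_n)_n\in\BDS$ for every $B'\in\BO$. For such a $B'$, write $B'$ as a finite sum-product of shifts $S_k$ and multiplications $M_b$ with $b\in\ell^\infty(\Z)$. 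The one remaining subtlety is that $P_n$ does not commute with $S_k$, so $P_nB'P_n$ is not literally a sum-product of the generators $(S_k)_n$ and $(M_{b^{(n)}})_n$ of $\BS$ — this is exactly where the construction of $\BS$ as an intersection with $\cFn$ (rather than as the raw algebra generated by those sequences) does its job.

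So the key step is: express $(P_nB'P_n)_n$ as an element of $\alg\{(S_k),(M_{b^{(n)}})\}\cap\cFn$. Write $P_n=M_{\chi_n}$ with $\chi_n$ the characteristic function of $-n..n$; then $(P_n)_n=(M_{\chi_n})_n$ is itself one of the allowed multiplication sequences (with $\sup_n\|\chi_n\|_\infty=1<\infty$). Also each constant sequence $(S_k)_n$ and $(M_b)_n$ (take $b^{(n)}:=b$ for all $n$) lies among the generators. Therefore, replacing in the sum-product representation of $B'$ each factor $S_k$ by $(S_k)_n$, each factor $M_b$ by $(M_b)_n$, and flanking the whole expression by $(P_n)_n=(M_{\chi_n})_n$ on both sides, we obtain a finite sum-product of generators of $\BS$ whose $n$-th entry is exactly $P_nB'P_n$; and since $P_nB'P_n=P_n(P_nB'P_n)P_n$, this sequence lies in $\cFn$ as well. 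Hence $(P_nB'P_n)_n\in\BS\subseteq\BDS$, completing (i) and the lemma.

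The main obstacle — really the only non-bookkeeping point — is making peace with the non-commutation of $P_n$ and the shifts: one must resist the temptation to ``move the $P_n$ inside'' and instead simply keep the outer $P_n$'s in place, observing that the defining expression for $\BS$ permits arbitrary finite sum-products of the generator sequences, among which the sequence $(P_n)_n=(M_{\chi_n})_n$ already appears. Everything else (the $\cFn$ membership, the reduction from $\BDO$ to $\BO$, the passage to the closure) is routine and uses only that $\BDS$ is by construction $\cF$-closed and contains $\BS$.
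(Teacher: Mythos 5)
Your proof is correct and follows essentially the same route as the paper: the paper's (very terse) argument likewise rests on the observation that $\cS\subset\cFn$ and that $\cS$ lies in the closed algebra generated by $(P_n)_n$, $(S_k)_n$ and $(M_b)_n$, which is contained in $\clos\alg\{(S_k)_n,(M_{b^{(n)}})_n\}$ precisely because $(P_n)_n=(M_{\chi_{-n..n}})_n$ is itself one of the admissible multiplication sequences. You have simply filled in the bookkeeping (reduction from $\BDO$ to $\BO$, closedness of $\BDS$) that the paper leaves implicit.
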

\begin{Proof}
By the definitions of $\cS$, $\cFn$ and $\BDO$, $\cS\subset\cFn$ holds as well as
\begin{eqnarray*}
\cS&\subset&\clos\alg\{(P_n)_n,(S_k)_n,(M_b)_n\ :\ k\in\Z,\ b\in\ell^\infty(\Z)\}\\
&\subset&\clos\alg\{(S_k)_n,(M_{b^{(n)}})_n\ :\ k\in\Z,\ b^{(n)}\in\ell^\infty(\Z),\,\sup\|b^{(n)}\|_\infty<\infty\}.
\qedhere
\end{eqnarray*}
\end{Proof}
\medskip

{\bf Fredholm operators, Calkin algebra and limit operators.\ }
Recall that a bounded linear operator $A$ on a Banach space $X$ is a {\sl Fredholm operator}
if its coset, $A+K(X)$, modulo compact operators $K(X)$, is invertible in the so-called {\sl Calkin algebra} $L(X)/K(X)$. This holds if and only if the nullspace of $A$ has finite dimension and the range of $A$ has finite codimension in $X$. In particular, Fredholm operators have a closed range. 

Since the coset $A+K(X)$ cannot be affected by changing finitely many matrix entries, its study takes place ``at infinity''. This is where limit operators \cite{RaRoSi1998,RaRoSiBook,Li:Book,LiSei:BigQuest} come in:

\begin{definition}
For $A\in\BDO$ on $X=\ell^p(\Z^d)$ with $p\in(1,\infty)$ and $d\in\N$, we look at all its translates $S_{-k}AS_k$ with $k\in\Z^d$ and speak of a {\sl limit operator}, $A_h$, on $\ell^p(\Z^d)$ if, for a particular sequence $h=(h_n)$ in $\Z^d$ with $|h_n|\to\infty$, the corresponding sequence of translates converges pointwise to $A_h$, that is, $S_{-h_n}AS_{h_n}\to A_h$ as $n\to\infty$.

Moreover, let $\Lim(A)$ denote the set of all limit operators of $A$, together with the following local versions: For a sequence $h=(h_n)$ in $\Z^d$ with $|h_n|\to \infty$, we put 
$$
\Lim_h(A)\ :=\ \{A_g\ :\ g\text{ is a subsequence of } h\}.
$$
For $d=1$ we fix the special cases $\Lim_+(A):=\Lim_{(1,2,\ldots)}(A)$ and $\Lim_-(A):=\Lim_{(-1,-2,\ldots)}(A)$.
\end{definition}

By repeated application of the Bolzano-Weierstra\ss\ theorem, it is shown \cite{RaRoSi1998,RaRoSiBook,Li:Book} that $\Lim_h(A)\ne\varnothing$ for all $A\in\BDO$ and all sequences $h=(h_n)$ in $\Z^d$ with $|h_n|\to \infty$ if $X$ is a scalar-valued $\ell^p$ space. For Banach space-valued $\ell^p$ spaces, this introduces an additional condition on $A$, see Section \ref{sec:Banach} below.

For $A\in\BDO$, the identification between a coset $A+K(X)$ in the Calkin algebra and the set $\Lim(A)$ preserves algebra operations, invertibility and inverses, hence spectra, but also norms, hence pseudospectra. 
See \cite{LaRa,RaRoSi1998,RaRoSiBook,Li:Book,CWLi:Memoir,Sei:Survey,LiSei:BigQuest,HagLiSei} for the key steps.
\medskip

{\bf The stacked operator $\oplus A_n$.\ } 
Let $(A_n)\in\cFn$. There are different ways to assemble a sequence of growing finite square matrices $A_n$ via a direct sum. One way is the classical block diagonal matrix, acting on $\ell^p(\N)$ or $\ell^p(\Z)$. Following \cite{RaRoSiBook,Li:Book, HagLiSei}, we take an alternative approach, where each $A_n$ remains an operator on $\ell^p(\Z)$: Take $u=(u_{m,n})_{m,n\in\Z}\in\ell^p(\Z^2)$ and let $A_n$ act on $(u_{\,\cdot,n})\in\ell^p(\Z)$ for each $n\in\N$. Because that way, each $A_n$ acts on its own invariant subspace of $\ell^p(\Z^2)$, the sequence $(A_n)$ acts as a direct sum:\\
\noindent
\begin{minipage}{90mm}
\[
(\oplus A_n\ u)_{m,n}\ :=\ (A_n (u_{\,\cdot,n}))_m\,,\quad m\in\Z,\ n\in\N.
\]
We refer to $A_n$ as the $n$-th {\sl layer} of the operator $\oplus A_n$ defined above.
For completeness, put $A_n:=0$ for $n\in\Z\setminus\N$ and extend the construction of $\oplus A_n$ to all $n\in\Z$. It follows that 
\begin{equation} \label{eq:normoplus}
\|\oplus A_n\|\ =\ \sup_n\|A_n\|\ =\ \|(A_n)\|_\cF
\end{equation}
and that $\oplus A_n$ is invertible if and only if every $A_n$ is invertible and their inverses are uniformly bounded. 
\end{minipage}
\begin{minipage}{70mm}
By $(A_n)\in\cFn$, we get the following support pattern for $\oplus A_n$:
\begin{center}
\begin{tikzpicture}[scale=0.39]
\foreach \k in {1,2,...,6}
  \fill[gray!50] (-\k,\k) rectangle (\k+1,\k+1);
\draw[gray!30,very thin,step=1cm] (-8,-1) grid (9,9);
\foreach \k in {1,2,...,6} {
   \node at (0.5,{\k+0.5}) {$A_{{\k}}$};
   \draw [black,line width=0.5mm] (-\k,\k) rectangle (\k+1,\k+1); 
}   
\foreach \j in {1,2,3} {
  \filldraw (0.5,{6.9+0.3*\j}) circle (1.5pt);
  \filldraw ({6.9+0.3*\j},{6.9+0.3*\j}) circle (1.5pt);  
  \filldraw ({-5.9-0.3*\j},{6.9+0.3*\j}) circle (1.5pt);  
}
\draw[-latex,line width=0.5mm] (0.5,8.0) -- (0.5,9);
\node at (1.2,8.5) {$n$};
\draw[-latex,line width=0.5mm] (-7,0) -- (8,0);
\node at (7.5,0.6) {$m$};
\node [text=gray] at (-6.4,1.5) { $\bf \Z^2$};
\end{tikzpicture}
\end{center}
\end{minipage}

Let $\Lay(\oplus A_n)$ denote the set of all layers of $\oplus A_n$, i.e.~$\{A_n:n\in\N\}$, and carry the notation over to sets such as $\Lay(\Lim(\oplus A_n)):=\cup\ \Lay(\oplus B_n)$, the union taken over all $\oplus B_n\in\Lim(\oplus A_n)$.
\begin{lemma} \label{lem:BDOiffBDS}
Let $(A_n)\in\cFn$ and $Y=\ell^p(\Z^2)$. Then $\oplus A_n\in\BDO(Y)$ if and only if $(A_n)\in\BDS$.
\end{lemma}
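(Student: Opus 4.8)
The plan is to transport the statement to band operators on $Y=\ell^p(\Z^2)$ via the map $(A_n)\mapsto\oplus A_n$, which by \eqref{eq:normoplus} is an isometric algebra monomorphism of $\cF$ into $L(Y)$. The bookkeeping fact that drives everything is: for $(A_n)\in\cFn$, one has $\oplus A_n\in\BO(Y)$ if and only if $(A_n)\in\BS$. Indeed, the matrix of $\oplus A_n$ has entry $\delta_{n,n'}(A_n)_{m,m'}$ at the position $((m,n),(m',n'))$, so it is supported on the ``layer-diagonal'' $n=n'$; and if $\sup_n\prop(A_n)=:w<\infty$, this entry also vanishes for $|m-m'|>w$, so $\prop(\oplus A_n)\le w$ in the $\ell^\infty$-metric on $\Z^2$ and $\oplus A_n\in\BO(Y)$. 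Conversely, any layer-diagonal $T\in\BO(Y)$ equals $\oplus A_n$ with $A_n$ its $n$-th layer, a band operator with $\prop(A_n)\le\prop(T)$ and $\|A_n\|\le\|T\|$, so $(A_n)$ is a bounded sequence of band operators of uniform bandwidth; as $(A_n)\in\cFn$, this puts it in $\BS$. From this, one implication of the lemma is immediate: if $(A_n)\in\BDS=\closn_\cF\,\BS$, pick $(A_n^{(j)})\in\BS$ with $(A_n^{(j)})\to(A_n)$ in $\cF$; then $\oplus A_n^{(j)}\in\BO(Y)$ and, by \eqref{eq:normoplus}, $\oplus A_n^{(j)}\to\oplus A_n$ in $L(Y)$, so $\oplus A_n\in\closn_{L(Y)}\,\BO(Y)=\BDO(Y)$.

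The other implication is where the actual work sits, the obstacle being that a band-operator approximation $T^{(j)}\to\oplus A_n$ need not consist of layer-diagonal operators. I would strip off the off-layer part using a norm-contractive averaging operator. For $N\in\N$ let $V_\theta\in L(Y)$ be the invertible isometry given by $(V_\theta u)_{(m,n)}:=\re^{\ri n\theta}u_{(m,n)}$, and set
\[
E_N(T)\ :=\ \frac1N\sum_{k=0}^{N-1}V_{2\pi k/N}\,T\,V_{-2\pi k/N},\qquad T\in L(Y).
\]
A short matrix-entry computation (summing a geometric series of $N$-th roots of unity) shows that $E_N(T)_{(m,n),(m',n')}$ equals $T_{(m,n),(m',n')}$ when $N$ divides $n-n'$ and is $0$ otherwise. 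Hence: $\|E_N\|\le1$, being an average of the operators $V_{2\pi k/N}\,T\,V_{-2\pi k/N}$, each of norm $\|T\|$; $E_N$ fixes every layer-diagonal operator, in particular $\oplus A_n$; and if $T\in\BO(Y)$ with $\prop(T)<N$, then $E_N(T)$ is exactly the layer-diagonal part of $T$, still a band operator with $\prop(E_N(T))\le\prop(T)$.

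Granting these properties, the remaining implication is short. Let $\oplus A_n\in\BDO(Y)$ with $(A_n)\in\cFn$, and choose $T^{(j)}\in\BO(Y)$ with $T^{(j)}\to\oplus A_n$. With $N_j:=\prop(T^{(j)})+1$, the operator $E_{N_j}(T^{(j)})$ is a layer-diagonal band operator, hence $E_{N_j}(T^{(j)})=\oplus D_n^{(j)}$ for a bounded sequence of band operators with $\sup_n\|D_n^{(j)}\|\le\|T^{(j)}\|$ and $\sup_n\prop(D_n^{(j)})\le\prop(T^{(j)})$. Since $E_{N_j}$ is a contraction with $E_{N_j}(\oplus A_n)=\oplus A_n$,
\[
\sup_n\big\|D_n^{(j)}-A_n\big\|\ =\ \big\|\oplus D_n^{(j)}-\oplus A_n\big\|\ =\ \big\|E_{N_j}\!\big(T^{(j)}-\oplus A_n\big)\big\|\ \le\ \big\|T^{(j)}-\oplus A_n\big\|\ \longrightarrow\ 0 .
\]
The compressed sequence $(P_nD_n^{(j)}P_n)_n$ now lies in $\cFn$ and is a sequence of band operators of uniformly bounded norm and bandwidth, hence lies in $\BS$; and, using $A_n=P_nA_nP_n$ and $\|P_n\|\le1$,
\[
\sup_n\big\|P_nD_n^{(j)}P_n-A_n\big\|\ =\ \sup_n\big\|P_n\big(D_n^{(j)}-A_n\big)P_n\big\|\ \le\ \sup_n\big\|D_n^{(j)}-A_n\big\|\ \longrightarrow\ 0 .
\]
Therefore $(A_n)=\lim_j(P_nD_n^{(j)}P_n)_n$ in $\cF$, so $(A_n)\in\closn_\cF\,\BS=\BDS$.

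I expect the step carrying the weight to be the construction of $E_N$ together with the verification of its three properties — contractivity, fixing the layer-diagonal operators, and extracting the layer-diagonal part of a band operator of sufficiently small bandwidth; everything else reduces to the support bookkeeping of the first paragraph and the isometry \eqref{eq:normoplus}. One minor technical point worth flagging: I average over the finite cyclic group $\{2\pi k/N:0\le k<N\}$ rather than integrating $V_\theta\,T\,V_{-\theta}$ over $\T$, because $\theta\mapsto V_\theta$ fails to be strongly continuous on $\ell^\infty$; the finite average keeps the argument elementary and valid uniformly for $p\in[1,\infty]$, avoiding Bochner-integration technicalities.
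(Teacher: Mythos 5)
Your proof is correct, and its first half (the band-level equivalence $\oplus A_n\in\BO(Y)\Leftrightarrow(A_n)\in\BS$ plus the isometry \eqref{eq:normoplus}) is exactly the paper's argument, which consists of that observation followed by the instruction to ``pass to the closure on both sides''. Where you genuinely go beyond the paper is in the direction $\oplus A_n\in\BDO(Y)\Rightarrow(A_n)\in\BDS$: you correctly identify that the closure of $\{\oplus B_n:(B_n)\in\BS\}$ in $L(Y)$ is a priori only contained in $\BDO(Y)\cap\{\text{layer-diagonal operators}\}$, and that the reverse containment needs an argument, since an approximating band operator $T^{(j)}\to\oplus A_n$ need not be layer-diagonal. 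Your averaging operator $E_N$ --- a discrete Fej\'er-type conditional expectation onto the entries with $N\mid(n-n')$, contractive for all $p\in[1,\infty]$, fixing $\oplus A_n$, and mapping band operators of propagation $<N$ to their layer-diagonal part --- cleanly supplies this missing step, and the final compression by $P_n$ to land in $\cFn$ is handled correctly. The paper leaves this point implicit (it is a standard fact in the $\BDO$ literature that the diagonal part of a band-dominated operator is again band-dominated, obtainable by exactly such averages), so your write-up buys a self-contained and elementary verification at the cost of length; nothing in it is wrong.
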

\begin{Proof}
First note that $\oplus A_n\in\BO(Y)$ iff $\infty>\prop(\oplus A_n)=\sup_n\prop(A_n)$, i.e.~$(A_n)\in\BS$. Then, recalling \eqref{eq:normoplus}, pass to the closure on both sides.
\end{Proof}
\begin{proposition} \label{prop:stable}
For $(A_n)\in\BDS$, the following are equivalent:\\[-7mm]
\begin{enumerate}[label=(\roman*)\,] \itemsep-1mm
\item $(A_n)$ is stable;
\item $(A_n)+\cNn$ is invertible in $\cFn/\cNn$;
\item $\oplus A_n$ (with each $A_n$ extended to $X$ by an according multiple of the identity rather than zero, see (6.4) in \cite{HagLiSei}) is a Fredholm operator;
\item all operators in $\Lim(\oplus A_n)$, i.e., all limit operators of $\oplus A_n$, are invertible;
\item all operators in $\Lay(\Lim(\oplus A_n))$, i.e., all layers of all limit operators of $\oplus A_n$, are invertible. 
\end{enumerate}
\end{proposition}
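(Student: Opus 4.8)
The plan is to establish the chain of equivalences by linking adjacent items, reusing the already-available machinery rather than proving anything from scratch. The equivalence $(i)\iff(ii)$ is Kozak's theorem, cited in the introduction; since $\cFn$ and $\cNn$ play the role of $\cF$ and $\cN$ restricted to the growing-matrix setting (with each $A_n$ viewed on $\ell^p(-n..n)$), the statement ``$\limsup\|A_n^{-1}\|<\infty$'' is by definition invertibility of the coset modulo null sequences. I would note that this is really just the statement that a bounded sequence is stable iff its coset is invertible, transported verbatim to the subalgebra $\cFn$, with the caveat that inverses are taken in $L(\ell^p(-n..n))$.

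Next I would prove $(ii)\iff(iii)$. Here the point is the relation between the finite-dimensional inverses (on $\ell^p(-n..n)$) and the operators $\oplus A_n$ with each layer extended to all of $\ell^p(\Z)$ by $c$ times the identity for a suitable constant $c$ — exactly the device described in the ``semi-infinite matrices'' paragraph of the introduction and in (6.4) of \cite{HagLiSei}. By that construction, $A_n$ (on $\ell^p(-n..n)$) is invertible iff its extension is, with controlled inverse norm, so $(A_n)$ is stable iff the family of extended layers is uniformly invertible, which by \eqref{eq:normoplus} is precisely invertibility of $\oplus A_n$ on $Y=\ell^p(\Z^2)$. The passage from ``$\oplus A_n$ invertible'' to ``$\oplus A_n$ Fredholm'' then uses that $\oplus A_n\in\BDO(Y)$ — which holds by Lemma~\ref{lem:BDOiffBDS}, since $(A_n)\in\BDS$ — and that for band-dominated operators Fredholmness plus a trivial (here automatic) index consideration gives invertibility; more directly, for $\oplus A_n$ the coset modulo $K(Y)$ being invertible translates, as remarked in the ``operator algebraists'' part of the introduction, to invertibility of the coset in $\cFn/\cNn$, so $(ii)\iff(iii)$ can equally be read off there.

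Then $(iii)\iff(iv)$ is the limit operator characterisation of Fredholmness in $\BDO$: an operator in $\BDO(Y)$ is Fredholm iff all its limit operators are invertible. This is exactly the ``main message'' of Lange--Rabinovich and its successors \cite{LaRa,RaRoSi1998,RaRoSiBook,Li:Book,LiSei:BigQuest,HagLiSei} quoted in the introduction, applied to $Y=\ell^p(\Z^2)$; I may add the remark that the relevant uniform-invertibility subtlety (that one needs invertibility of all limit operators \emph{with uniformly bounded inverses}) is automatic here because $\Lim(\oplus A_n)$ is, up to the usual shift-compactness, a closed family. Finally $(iv)\iff(v)$: every $\oplus B_n\in\Lim(\oplus A_n)$ is again of the stacked form — translating $\oplus A_n$ along $\Z^2$ and taking pointwise limits preserves the layered support pattern pictured above — and for a stacked operator $\oplus B_n$, invertibility is equivalent to invertibility of every layer $B_n$ with uniformly bounded inverses, again by \eqref{eq:normoplus}. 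So the family $\Lim(\oplus A_n)$ is uniformly invertible iff every layer occurring in it is invertible (the uniform bound on inverses is again automatic by a closedness/compactness argument on the layer set).

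The main obstacle, and the step I would spend the most care on, is $(ii)\iff(iii)$ — specifically getting the extension-by-$cI$ device exactly right so that invertibility and inverse-norms genuinely match between the finite matrices on $\ell^p(-n..n)$ and the full-space layers, \emph{uniformly in $n$}, and checking that the resulting $\oplus A_n$ indeed lands in $\BDO(Y)$ (which is where Lemma~\ref{lem:BDOiffBDS} and the hypothesis $(A_n)\in\BDS$ are used). The remaining links are essentially citations of established theorems applied in the ambient space $\ell^p(\Z^2)$, but one should be explicit that the various ``invertible'' clauses secretly carry a uniform-boundedness-of-inverses requirement and note why that requirement is free in each case.
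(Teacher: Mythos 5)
Your overall architecture --- (i)$\iff$(ii) by Kozak, (ii)$\iff$(iii) by the Douglas--Howe/Gorodetski translation between the Calkin algebra of $\ell^p(\Z^2)$ and $\cFn/\cNn$, (iii)$\iff$(iv) by the limit operator criterion for Fredholmness in $\BDO(\ell^p(\Z^2))$ via Lemma~\ref{lem:BDOiffBDS}, and (iv)$\iff$(v) by decomposing limit operators into layers --- is exactly the paper's route; the paper's own proof is little more than these citations (Lemma~\ref{lem:BDOiffBDS} plus Theorem~11 of \cite{LiSei:BigQuest} for (iii)$\iff$(iv), footnote~16 of \cite{HagLiSei} for the redundancy of the uniform bound on inverses in (v), and \S6 of \cite{HagLiSei} for the rest).

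However, your primary formulation of (ii)$\iff$(iii) contains a step that fails. Stability is \emph{not} equivalent to invertibility of $\oplus A_n$: stability permits finitely many $A_n$ to be non-invertible, in which case the corresponding layers of $\oplus A_n$, and hence $\oplus A_n$ itself, are non-invertible. What saves the day is that each extended layer differs from $cI$ by a finite-rank operator (being supported on $-n..n$), so correcting finitely many bad layers is a compact perturbation --- which is precisely why the notion appearing in (iii) is Fredholmness, i.e.\ invertibility modulo $\cK$, rather than invertibility. Your bridge ``for band-dominated operators Fredholmness plus a trivial index consideration gives invertibility'' is false: a diagonal operator with a single zero entry is band-dominated, Fredholm of index $0$, and not invertible. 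Your parenthetical fallback --- reading (ii)$\iff$(iii) off the identification of the coset of $\oplus A_n$ in $\cL/\cK$ with the coset of $(A_n)$ in $\cFn/\cNn$ --- is the correct argument and is what \S6 of \cite{HagLiSei} does, so the step is repairable; but as written your main line proves the wrong equivalence. A smaller point: in (iv)$\iff$(v) you dismiss the uniform boundedness of the layer inverses as automatic ``by a closedness/compactness argument''; the conclusion is right, but this is exactly the nontrivial content of footnote~16 of \cite{HagLiSei} (ultimately resting on the main theorem of \cite{LiSei:BigQuest}), not a routine compactness argument, and deserves the citation rather than a wave of the hand.
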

\begin{Proof}
The equivalence of $(iii)$ and $(iv)$ is by Lemma \ref{lem:BDOiffBDS} and \cite[Thm.\,11]{LiSei:BigQuest}.
The uniform boundedness condition of the inverses in $(v)$ is redundant, see footnote 16 of \cite{HagLiSei}. The rest of the proof is as in \S 6 of \cite{HagLiSei}.
\end{Proof}
Consequently, $\Lim(\oplus A_n)$ and $\Lay(\Lim(\oplus A_n))$ could both act as the set of stability indicators. 
\medskip

\section{Establishing our $\limsup$ formulas in the case $(A_n)\in\BDS$} \label{sec:limsup}
We start in the context that we think is as general as possible: $(A_n)\in\BDS$. The price is that we have to keep the set of stability indicators rather large and unspecific. 
This changes when we specialize to $(A_n)\in\cS$. The following propositions cover both situations.

To this end, given $(A_n)\in\BDS$, put
\begin{equation} \label{eq:BAn}
\cB(A_n)\ :=\ \left\{
\begin{array}{cp{5mm}l}
\Stab(A_n)&&\text{if } (A_n)\in\cS,\\
\Lay(\Lim(\oplus A_n))&&\text{otherwise}
\end{array}
\right.
\end{equation}
with $\Stab(A_n)$ from Definition \ref{def:Stab}.

\begin{remark} \label{rem:defStab}
{\bf a) } Since the original operator $A$ and its approximants $A_n$ all act on $\ell^p(\Z)$ or subspaces thereof, it is desirable to also have the stability indicators of $(A_n)$ in that setting and not acting on $\ell^p(\Z^2)$, even though we pass that space on our way -- hence the $\Lay$ operation.

{\bf b) } The following propositions hold with $\cB(A_n)$ equal to $\Lay(\Lim(\oplus A_n))$. However, it were sufficient to just have the maximizers $B$ of $\|B\|$ and $C$ of $\|C^{-1}\|$ in $\cB(A_n)$, making sure that
\[
\|\cB(A_n)\|_\infty\ =\ \|\Lim(\oplus A_n)\|_\infty
\qquad\text{and}\qquad
\|\big(\cB(A_n)\big)^{-1}\|_\infty\ =\ \|\big(\Lim(\oplus A_n)\big)^{-1}\|_\infty.
\]
For general $(A_n)\in\BDS$, however, these maximizers could be anywhere in $\Lay(\Lim(\oplus A_n))$, whence, without further information on $(A_n)$, we keep $\cB(A_n)$ this large. But for $(A_n)\in\cS$, it is possible to say beforehand in which directions limit operators of $\oplus A_n$ stand a chance of their layers maximizing $\|B\|$ or $\|C^{-1}\|$, while other directions can be disregarded without looking at the particular example. This study is done in Section \ref{sec:domdir}, first for sequences $(A_n)$ of pure finite sections and then for composed finite sections, $(A_n)\in\cS$, both leading to the same economic version of $\cB(A_n)$ termed $\Stab(A_n)$ in Definition \ref{def:Stab} below.
\end{remark}

\subsection{The $\limsup$ of $\|A_n\|$}
\begin{proposition} \label{prop:limsupAn}
For $(A_n)\in\BDS$ and $\cB(A_n)$ from \eqref{eq:BAn}, it holds that
\[
\limsup\|A_n\|\ =\ \max_{B\in\cB(A_n)}\|B\|\ =:\|\cB(A_n)\|_\infty\,.
\]
\end{proposition}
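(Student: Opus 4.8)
The plan is to establish the two inequalities $\limsup\|A_n\|\le\|\cB(A_n)\|_\infty$ and $\limsup\|A_n\|\ge\|\cB(A_n)\|_\infty$ separately, and to reduce everything to known facts about the stacked operator $\oplus A_n$ and its limit operators. First I would record the trivial reduction: by Remark~\ref{rem:defStab}~b) it suffices to treat the case $\cB(A_n)=\Lay(\Lim(\oplus A_n))$, since for $(A_n)\in\cS$ the set $\Stab(A_n)$ is, by design, large enough to contain the relevant maximizers (this is the content of Section~\ref{sec:domdir}), so $\|\Stab(A_n)\|_\infty=\|\Lay(\Lim(\oplus A_n))\|_\infty$ in that case.

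For the lower bound I would argue directly from the definition of a limit operator. Fix any $\oplus B_n\in\Lim(\oplus A_n)$, say $S_{-h_k}(\oplus A_n)S_{h_k}\to\oplus B_n$ pointwise as $k\to\infty$ for some sequence $h_k\in\Z^2$ with $|h_k|\to\infty$. Pointwise convergence of operators on $\ell^p(\Z^2)$ is lower semicontinuous for the norm, so $\|\oplus B_n\|\le\liminf_k\|S_{-h_k}(\oplus A_n)S_{h_k}\|=\|\oplus A_n\|=\sup_n\|A_n\|$; but I want the sharper statement involving $\limsup_n\|A_n\|$, so instead I would track individual layers. Given a layer $B_m$ of $\oplus B_n$ and a unit vector $x$ nearly attaining $\|B_m\|$, the vector $x$ sits in the $m$-th invariant subspace; translating back by $h_k$ and using that the shifts move this subspace to the $(m+\text{(second coordinate of }h_k))$-th layer of $\oplus A_n$, one sees $\|B_m x\|=\lim_k\|A_{n_k}(\text{translate of }x)\|$ for the corresponding layer indices $n_k$, which tend to $\infty$ because $|h_k|\to\infty$ forces the relevant layer index to escape (layers $A_n$ with $n\notin\N$ are zero, so the surviving mass must come from large $n$). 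Hence $\|B_m\|\le\limsup_n\|A_n\|$, and taking the supremum over all layers of all limit operators gives $\|\cB(A_n)\|_\infty\le\limsup_n\|A_n\|$.

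For the upper bound I would use compactness/extraction. Pick a subsequence $(A_{n_j})$ with $\|A_{n_j}\|\to\limsup_n\|A_n\|$ and unit vectors $x_j$ with $\|A_{n_j}x_j\|$ close to $\|A_{n_j}\|$. View each $x_j$, placed in layer $n_j$, as a unit vector $u_j\in\ell^p(\Z^2)$; after translating by a sequence $h_j$ chosen to recenter the essential support of $x_j$ (possible since $x_j$ can, up to $\varepsilon$, be taken finitely supported, and $n_j\to\infty$ forces $|h_j|\to\infty$), the translated operators $S_{-h_j}(\oplus A_n)S_{h_j}$ have a pointwise-convergent subsequence to some $\oplus B_n\in\Lim(\oplus A_n)$ — this is exactly the Bolzano--Weierstra\ss\ extraction underlying the existence of limit operators, valid on scalar $\ell^p(\Z)$ for $p\in(1,\infty)$. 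Along that subsequence the recentered unit vectors converge weakly (or, after a further diagonal extraction, converge in the relevant sense on finitely many coordinates) to a unit-norm vector living in a single layer $m$, and $\|B_m\|\ge\lim\|A_{n_j}x_j\|=\limsup_n\|A_n\|$. Since $B_m\in\Lay(\Lim(\oplus A_n))=\cB(A_n)$, this yields $\|\cB(A_n)\|_\infty\ge\limsup_n\|A_n\|$, and the $\max$ (rather than $\sup$) is justified because $\Lim(\oplus A_n)$ is compact in the pointwise topology, so the supremum of $\|B\|$ is attained.

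\textbf{Main obstacle.} The delicate point is the upper bound: turning the near-maximizing unit vectors $x_j$ into a single vector that survives in one layer of a genuine limit operator. The mass of $x_j$ could in principle spread out (escape to infinity within its layer, or split), so one must argue that after recentering along a well-chosen $h_j$ a definite fraction of the norm concentrates and passes to the pointwise limit — equivalently, that no norm is lost in the limit-operator extraction. For $p\in(1,\infty)$ on scalar $\ell^p(\Z)$ this is handled by the standard limit-operator machinery (the ``self-similar'' extraction of \cite{RaRoSiBook,Li:Book}), which I would invoke rather than reprove; the honest work is just bookkeeping of how the two shift-coordinates of $h_j$ act on layers versus within-layer positions of $\oplus A_n$.
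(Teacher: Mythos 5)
Your overall route is the paper's route with the quotient--algebra language stripped out: reduce to $\Lay(\Lim(\oplus A_n))$, prove ``$\ge$'' by the classical lower-semicontinuity/translation argument, and prove ``$\le$'' by extracting a limit operator from near-maximizing vectors. The paper instead writes $\limsup\|A_n\|=\|(A_n)+\cN\|_{\cF/\cN}=\|\oplus A_n+\cK\|_{\cL/\cK}$ and then cites the essential-norm formula $\|\oplus A_n+\cK\|_{\cL/\cK}=\max_{B\in\Lim(\oplus A_n)}\|B\|$; your two inequalities are exactly the two directions of that formula, so the substance is the same. Two corrections, though. First, your justification of the $\max$ --- ``$\Lim(\oplus A_n)$ is compact in the pointwise topology, so the supremum of $\|B\|$ is attained'' --- is false as stated: the operator norm is only \emph{lower} semicontinuous under pointwise convergence, so compactness gives no upper semicontinuity and no attainment. (It is also redundant: if your upper-bound construction succeeds it exhibits a specific layer $B_m$ with $\|B_m\|\ge\limsup\|A_n\|$, which together with the lower bound attains the sup.) Second, the concentration step you defer --- ``no norm is lost in the limit-operator extraction'' --- is \emph{not} delivered by the self-similar extraction of \cite{RaRoSiBook,Li:Book}; that machinery yields $\|\oplus A_n+\cK\|_{\cL/\cK}\ge\sup\|B\|$ but not the reverse inequality with an attained maximum. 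The latter is precisely the main theorem of \cite{HagLiSei} (resting on \cite{LiSei:BigQuest}), which is what step \ci3 of the paper's proof cites; you should cite that, not the older books. Finally, in your lower bound the case of bounded second coordinate $j_k$ with $|i_k|\to\infty$ needs the (easy) remark that a fixed finite matrix shifted to infinity tends pointwise to $0$, so the corresponding layer is $0$ and harmless.
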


\begin{Proof}
Steps \ci1--\ci3 in the following argument only require $(A_n)\in\BDS$. 
For step~\ci4, let $(A_n)\in\cS\subset\BDS$.
Then, following \S6 of \cite{HagLiSei}, we argue as follows:
\begin{equation} \label{eq:limsupAn}
\limsup \|A_n\|\ \stackrel{\ci1}=\ \|(A_n)+\cN\|_{\cF/\cN}\ \stackrel{\ci2}=\ \|\oplus A_n+\cK\|_{\cL/\cK}\ \stackrel{\ci3}=\ \|\Lim(\oplus A_n)\|_\infty\ \stackrel{\ci4}=\ \|\Stab(A_n)\|_\infty
\end{equation}
\begin{enumerate}[label=\protect\ci{\arabic*}]
\item Technically this is a first semester exercise but conceptually it is an extremely crucial piece of the puzzle that we trace back to \cite{Bo94}.

\item Here $\cL:=L(X)$ and $\cK:=K(X)$ abbreviate the sets of bounded, resp. compact, operators on $X$.
The key is \eqref{eq:normoplus} and that $\oplus A_n$ is compact if and only if $(A_n)$ is a null-sequence. We know this from \cite{DouglasHowe} and \cite{Goro}. See \cite{RaRoSiBook,Li:Book,HagLiSei} for generalisations and extensions. 

\item Lemma \ref{lem:BDOiffBDS} implies that $\oplus A_n\in\BDO$, so that the limit operator approach  \cite{LaRa,RaRoSi1998,RaRoSiBook,Li:Book,CWLi:Memoir,Sei:Survey,LiSei:BigQuest,HagLiSei} applies.
\cite{HagLiSei} shows that $\|.\|_\infty$ is always attained as a maximum. 

\item This step requires $(A_n)\in\cS$, see Remark \ref{rem:defStab} b), Propositions \ref{prop:a-f1} and \ref{prop:a-f2} and \cite[\S6]{HagLiSei}. \qedhere
\end{enumerate}
\end{Proof}

For pure finite sections, $A_n=P_nAP_n$ with $A\in\BDO$, we show in Section \ref{sec:pureimproved} below that $\|A_n\|\to\|A\|$. For composed finite sections $(A_n)\in\cS$, neither do the norms $\|A_n\|$ generally converge, nor is $\limsup\|A_n\|$ generally equal to $\|A\|$. See Example~\ref{ex:kappa} below.

\subsection{The $\limsup$ of $\|A_n^{-1}\|$}
We now come to the sequence of the inverses, $A_n^{-1}$, where, of course, every $A_n$ is inverted as an operator on $\ell^p(-n..n)$, not $\ell^p(\Z)$. 

\begin{proposition} \label{prop:limsupAn-1}
For $(A_n)\in\BDS$ and $\cB(A_n)$ from \eqref{eq:BAn}, it holds that
\[
\limsup\|A_n^{-1}\|\ =\ \max_{B\in\cB(A_n)}\|B^{-1}\|\ 
=:\ \|\big(\cB(A_n)\big)^{-1}\|_\infty\,.
\]
In particular, the $\limsup$ is finite if and only if the maximum is finite, i.e.~$(A_n)$ is stable if and only if every $B\in\cB(A_n)$ is invertible.
\end{proposition}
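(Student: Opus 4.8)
The plan is to mimic the chain \eqref{eq:limsupAn} from the proof of Proposition \ref{prop:limsupAn}, but with the lower-norm quantity $\mu$ in place of the norm. The key point is that the passage $(A_n)+\cN\mapsto\oplus A_n+\cK\mapsto\Lim(\oplus A_n)$ preserves not only norms but also invertibility and inverses, hence the quantity $\mu(\cdot)$ and therefore $\|(\cdot)^{-1}\|=1/\mu(\cdot)$, with the convention $1/0=\infty$. Concretely, I would argue
\begin{equation} \label{eq:limsupAn-1}
\limsup\|A_n^{-1}\|\ \stackrel{\ci1}=\ \|\big((A_n)+\cN\big)^{-1}\|_{\cF/\cN}\ \stackrel{\ci2}=\ \|\big(\oplus A_n+\cK\big)^{-1}\|_{\cL/\cK}\ \stackrel{\ci3}=\ \|\big(\Lim(\oplus A_n)\big)^{-1}\|_\infty\ \stackrel{\ci4}=\ \|\big(\Stab(A_n)\big)^{-1}\|_\infty,
\end{equation}
where again steps \ci1--\ci3 use only $(A_n)\in\BDS$ and step \ci4 uses $(A_n)\in\cS$.

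For step \ci1: by Kozak's theorem (Proposition \ref{prop:stable}, $(i)\Leftrightarrow(ii)$), the coset $(A_n)+\cN$ is invertible in $\cF/\cN$ exactly when $(A_n)$ is stable, and in that case one shows in the standard way that $\|\big((A_n)+\cN\big)^{-1}\|_{\cF/\cN}=\limsup\|A_n^{-1}\|$ — the $\le$ direction is immediate from a representative, the $\ge$ direction from the fact that a null perturbation cannot improve the asymptotic norm of the inverse; if $(A_n)$ is not stable both sides are $+\infty$. For step \ci2 I would invoke \eqref{eq:normoplus} together with the fact (used already in \ci2 of the previous proof) that $\oplus A_n$ is compact iff $(A_n)\in\cN$, so that $(A_n)+\cN$ corresponds isometrically to $\oplus A_n+\cK$; since the correspondence is an algebra isomorphism onto its image it preserves invertibility, hence maps the inverse coset to the inverse coset, hence preserves the norm of the inverse (again $+\infty=+\infty$ in the non-invertible case). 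For step \ci3, Lemma \ref{lem:BDOiffBDS} gives $\oplus A_n\in\BDO(\ell^p(\Z^2))$, and the limit operator machinery cited in the excerpt says that $\oplus A_n+\cK$ is invertible iff every operator in $\Lim(\oplus A_n)$ is invertible and, quantitatively, $\mu(\oplus A_n+\cK)=\min_{B\in\Lim(\oplus A_n)}\mu(B)$ with the minimum attained (this is exactly the $\mu$-version of the norm statement from \cite{HagLiSei} used in \ci3 before); dividing, $\|\big(\oplus A_n+\cK\big)^{-1}\|_{\cL/\cK}=\max_{B\in\Lim(\oplus A_n)}\|B^{-1}\|$, and then one replaces $\Lim(\oplus A_n)$ by its set of layers $\Lay(\Lim(\oplus A_n))=\cB(A_n)$, noting that each limit operator of $\oplus A_n$ is itself of the form $\oplus(\text{layers})$ so its inverse's norm is the sup over its layers by \eqref{eq:normoplus}. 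Step \ci4, the passage from $\cB(A_n)=\Lay(\Lim(\oplus A_n))$ to the economized set $\Stab(A_n)$, is deferred to Section \ref{sec:domdir} (Propositions \ref{prop:a-f1}, \ref{prop:a-f2}) exactly as in the previous proof, via Remark \ref{rem:defStab}(b): one only needs the maximizer of $\|B^{-1}\|$ to survive into $\Stab(A_n)$. The final sentence of the proposition — finiteness of the $\limsup$ iff stability iff all $B\in\cB(A_n)$ invertible — is then immediate, since a maximum of the values $\|B^{-1}\|\in(0,\infty]$ is finite iff every term is.

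The main obstacle I anticipate is making step \ci3 genuinely quantitative for the lower norm $\mu$ rather than merely qualitative: the cited limit-operator results are usually phrased as "$A$ Fredholm iff all limit operators invertible", and one needs the sharper statement that $\mu$ of the Calkin coset equals the minimum of $\mu$ over $\Lim$, with attainment. This is supplied by \cite{HagLiSei} (it is the same input already used, for the ordinary norm, in step \ci3 of Proposition \ref{prop:limsupAn}); what requires a line of care is that invertibility of the coset really does correspond to invertibility of $\oplus A_n$ modulo compacts and that the Banach-space adjoint enters symmetrically, so that $\mu=\min(\nu(\cdot),\nu((\cdot)^*))$ behaves well under all three passages. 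Everything else is bookkeeping analogous to the norm case.
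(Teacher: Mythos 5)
Your overall strategy is the same as the paper's (the chain $\limsup \leftrightarrow \cF/\cN \leftrightarrow \cL/\cK \leftrightarrow \Lim \leftrightarrow \Stab$), but the middle of your argument takes a genuinely different route, and that is where the two points of care lie. The paper never inverts the coset $\oplus A_n+\cK$ in the Calkin algebra. Instead it applies steps \ci1--\ci2 to the sequence of inverses itself, obtaining $\limsup\|A_n^{-1}\|=\|\oplus A_n^{-1}+\cK\|_{\cL/\cK}$, then introduces the explicit operator $R=\oplus R_n$ with $R_n=A_n^{-1}$ (and $R_n=0$ where $A_n^{-1}$ does not exist), checks that $R$ is a regularizer of $\oplus A_n$ modulo $\cK$ relative to $\oplus P_n$, notes $R\in\BDO$, and then only needs the \emph{norm} formula $\|R+\cK\|=\|\Lim(R)\|_\infty$ (the same input as in step \ci3 of Proposition \ref{prop:limsupAn}) together with Seidel's theorem that the limit operators of a regularizer are the inverses of the limit operators, i.e.\ $\Lim(R)=\big(\Lim(\oplus A_n)\big)^{-1}$. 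Your route instead wants $\|\big(\oplus A_n+\cK\big)^{-1}\|_{\cL/\cK}=\max_{B\in\Lim(\oplus A_n)}\|B^{-1}\|$ directly, via an ``essential lower norm equals $\min_B\mu(B)$'' statement.

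Two concrete issues with your version. First, with the zero-extension convention used for $\oplus A_n$ in this paper, the coset $\oplus A_n+\cK$ is \emph{not} invertible in $\cL(\ell^p(\Z^2))/\cK$: the operator annihilates the infinite-dimensional complement of the support triangle, so it is never Fredholm on $\ell^p(\Z^2)$ (this is exactly why Proposition \ref{prop:stable}(iii) insists on extending each $A_n$ by a multiple of the identity rather than by zero). So the object $\big(\oplus A_n+\cK\big)^{-1}$ in your display does not exist in the full Calkin algebra; you must either switch to the identity-extension, work in the corner algebra determined by $\oplus P_n$, or simply replace it by the well-defined coset $\oplus A_n^{-1}+\cK$, which is what the paper does. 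Second, the identity $\|B^{-1}\|=1/\mu(B)$ from \eqref{eq:invmu} is a statement about operators on a Banach space (using both $\nu(B)$ and $\nu(B^*)$); there is no off-the-shelf ``lower norm of a coset'' in a quotient Banach algebra, so the step ``$\mu$ of the Calkin coset equals $\min_B\mu(B)$, divide to get the norm of the inverse coset'' needs the genuinely quantitative essential-lower-norm results of \cite{HagLiSei} rather than the norm formula alone. You correctly flag this as the main obstacle, and those results do exist, but the paper's regularizer detour (\ci5 and \ci6) is precisely the device that avoids having to invoke them: it reduces everything to the already-established norm formula applied to the concrete band-dominated operator $R$. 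With these two repairs your argument goes through; the final equivalence with stability is handled the same way in both versions.
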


\begin{Proof}
First, let $(A_n)\in\BDS$ be stable, so that almost all $A_n$ are invertible and the inverses are uniformly bounded. Let us check the following equalities one by one:
\begin{eqnarray} \nonumber
\limsup \|A_n^{-1}\|\ &\stackrel{\ci1}=&\ \|(A_n^{-1})+\cN\|_{\cF/\cN}\ \stackrel{\ci2}=\ \|\oplus A_n^{-1}+\cK\|_{\cL/\cK}\ \stackrel{\ci5}=\ \|R+\cK\|_{\cL/\cK}\\
\ &\stackrel{\ci3}=&\ \|\Lim(R)\|_\infty\ \stackrel{\ci6}=\ \|\big(\Lim(\oplus A_n)\big)^{-1}\|_\infty\ \stackrel{\ci4}=\ \|\big(\Stab(A_n)\big)^{-1}\|_\infty \label{eq:limsupAn-1}
\end{eqnarray}
\begin{enumerate}
\item[\ci1-\ci2] See above. Note that the finitely many non-existent $A_n^{-1}$ can be ignored in the sequence $(A_n^{-1})$ modulo $\cN$ and in the operator $\oplus A_n^{-1}$ modulo $\cK$.

\item[\ci5] To be more precise, put $R:=\oplus R_n$ with $R_n=A_n^{-1}$ where existent and $0$ otherwise. Then $R\,(\oplus A_n)-\oplus P_n\in\cK$ and $(\oplus A_n)R-\oplus P_n\in\cK$, so that $R$ is a Fredholm regularizer of $\oplus A_n$.

\item[\ci3] As above, now with $R\in\BDO$, by Lemma \ref{lem:BDOiffBDS} and \cite[Thm.\,21]{Sei:Survey}.
 Again note that $\|\cdot\|_\infty$ is attained as a maximum, by Theorem~3.2 in \cite{HagLiSei}. 

\item[\ci6] Here we use 
the fact that, by Theorem~16 of \cite{Sei:Survey}, the limit operators of a Fredholm regularizer of $B$ are the inverses of the limit operators of $B$. 

\item[\ci4] This step requires $(A_n)\in\cS$, see Remark \ref{rem:defStab} b), Propositions \ref{prop:a-f1} and \ref{prop:a-f2} and \cite[\S6]{HagLiSei}.
\end{enumerate}
If $(A_n)\in\BDS$ is not stable then, by Proposition \ref{prop:stable}, one operator $B\in\Lim(\oplus A_n)$ is not invertible. If $(A_n)\in\cS$ and $\cB(A_n)=\Stab(A_n)$ then, by Propositions \ref{prop:a-f1} and \ref{prop:a-f2}, the layers of that $B$ are in $\Stab(A_n)$, and one of them is not invertible, by Proposition \ref{prop:stable}.
\end{Proof}

\subsection{The $\limsup$ of pseudospectra, $\speps A_n$ and $\Speps A_n$} \label{sec:Speps}

Here is the translation of Proposition \ref{prop:limsupAn-1} 
into the language of $\mu(\cdot)$ from \eqref{eq:invmu}: If $(A_n)\in\BDS$ then
\begin{equation} \label{eq:muB}
\liminf \mu(A_n)\ =\ \min_{B\in\cB(A_n)} \mu(B).
\end{equation}
Further, notice this standard lemma:
\begin{lemma} \label{lem:Ti}
For arbitrary sets $T_i$ with $i$ in any index set $\cI$, one has
\[
\clos \bigcup_{i\in\cI} \clos T_i\ =\ \clos \bigcup_{i\in\cI} T_i.
\]
\end{lemma}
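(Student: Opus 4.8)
The plan is to prove the two inclusions separately, each being a direct consequence of two elementary properties of the topological closure operator on $\C$: monotonicity, i.e.\ $S\subseteq T$ forces $\clos S\subseteq\clos T$, and idempotence, $\clos\clos S=\clos S$.

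For the inclusion ``$\supseteq$'' I would start from $T_i\subseteq\clos T_i$ for every $i\in\cI$, take the union over $i$ to obtain $\bigcup_{i\in\cI}T_i\subseteq\bigcup_{i\in\cI}\clos T_i$, and then apply monotonicity of the closure to both sides.

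For ``$\subseteq$'' the key observation is that, for each fixed $i\in\cI$, one has $T_i\subseteq\bigcup_{j\in\cI}T_j$, so monotonicity yields $\clos T_i\subseteq\clos\bigcup_{j\in\cI}T_j$. Since the right-hand side no longer depends on $i$, taking the union over all $i\in\cI$ gives $\bigcup_{i\in\cI}\clos T_i\subseteq\clos\bigcup_{j\in\cI}T_j$; applying the closure once more and using idempotence on the right, $\clos\bigcup_{i\in\cI}\clos T_i\subseteq\clos\clos\bigcup_{j\in\cI}T_j=\clos\bigcup_{j\in\cI}T_j$, which is the desired inclusion.

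I do not expect any genuine obstacle here: this is a routine fact, included only because it will be used to tidy up a pseudospectral union formula (writing a union of closed pseudospectra with one outer closure, rather than closing each summand). The single point to be careful about is the temptation to ``simplify'' by pulling the closure through the possibly infinite union $\bigcup_{i\in\cI}$ — that is false in general, which is exactly why the outer $\clos$ must appear on both sides of the identity. The argument above sidesteps this by using only monotonicity together with one application of idempotence.
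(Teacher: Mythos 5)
Your proof is correct and follows essentially the same route as the paper's: both establish ``$\subseteq$'' by starting from $T_i\subseteq\bigcup_j T_j$, applying monotonicity of the closure, taking the union over $i$, and closing once more with idempotence, while ``$\supseteq$'' is immediate from $T_i\subseteq\clos T_i$. No gaps.
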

\begin{Proof}
\framebox{$\subseteq$} 
Start with $T_j\subseteq \cup_{i\in\cI}\, T_i$ for any $j\in\cI$, take the closure on both sides, then the union $\cup_{j\in\cI}$ on the left. Then again take the closure on both sides.
Direction \framebox{$\supseteq$}  is obvious.
\end{Proof}
Then here is our result on the $\limsup$ of $\speps A_n$ and $\Speps A_n$:
\begin{proposition} \label{prop:limsupSpeps}
For a sequence $(A_n)\in\BDS$ and $\cB(A_n)$ from \eqref{eq:BAn}, one has
\begin{eqnarray} \label{eq:limsupspeps}
\limsup\speps A_n &=& \clos\bigcup_{B\in\cB(A_n)}\speps B\ =\  \bigcup_{B\in\cB(A_n)}\clos\speps B
\qquad\text{and}
\\ \label{eq:limsupSpeps}
\limsup\Speps A_n &=& \bigcup_{B\in\cB(A_n)}\Speps B.
\end{eqnarray}
Moreover, all sets in \eqref{eq:limsupspeps} and \eqref{eq:limsupSpeps} are equal.
\end{proposition}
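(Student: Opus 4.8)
The plan is to prove Proposition~\ref{prop:limsupSpeps} by translating the known asymptotics of $\|A_n^{-1}\|$ into a statement about the $\limsup$ of sub- and super-level sets of the function $\mu(A_n-\lambda I)$, using \eqref{eq:muB} as the crucial input. First I would note that for $(A_n)\in\BDS$ the shifted sequence $(A_n-\lambda I)$ is again in $\BDS$ for every $\lambda\in\C$, and that its stability-indicator set transforms in the obvious way: $\cB(A_n-\lambda I)=\{B-\lambda I:B\in\cB(A_n)\}$. (This follows because $\oplus(A_n-\lambda I)=(\oplus A_n)-\lambda I$, limit operators of $(\oplus A_n)-\lambda I$ are $B_h-\lambda I$ for $B_h$ a limit operator of $\oplus A_n$, and $\lambda I$ passes through the $\Lay$ operation unchanged; for the $\cS$-case one invokes Propositions~\ref{prop:a-f1} and \ref{prop:a-f2}.) Hence \eqref{eq:muB} applied to $(A_n-\lambda I)$ gives, for every $\lambda$,
\[
\liminf_n \mu(A_n-\lambda I)\ =\ \min_{B\in\cB(A_n)}\mu(B-\lambda I).
\]

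Next I would unwind the set-theoretic meaning of $\limsup$ of the level-set sequences. For the closed pseudospectra: $\lambda\in\limsup\Speps A_n$ means there is a subsequence $n_k$ and points $\lambda_{n_k}\in\Speps A_{n_k}$, i.e.\ $\mu(A_{n_k}-\lambda_{n_k}I)\le\eps$, with $\lambda_{n_k}\to\lambda$. Using (upper semi-)continuity of $\mu(A-\lambda I)$ in $\lambda$ (a standard $1$-Lipschitz estimate, $|\mu(A-\lambda I)-\mu(A-\lambda'I)|\le|\lambda-\lambda'|$) and a diagonal argument, this is equivalent to $\liminf_n\mu(A_n-\lambda I)\le\eps$, which by the displayed identity equals $\min_{B\in\cB(A_n)}\mu(B-\lambda I)\le\eps$, i.e.\ $\mu(B-\lambda I)\le\eps$ for some $B\in\cB(A_n)$, i.e.\ $\lambda\in\bigcup_{B}\Speps B$. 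That proves \eqref{eq:limsupSpeps}. For the open pseudospectra in \eqref{eq:limsupspeps} one argues similarly but must take closures: $\lambda\in\limsup\speps A_n$ forces $\liminf_n\mu(A_n-\lambda I)\le\eps$ (limit of strict inequalities need only give $\le$), hence $\lambda\in\bigcup_B\Speps B=\bigcup_B\clos\speps B$ (using the Globevnik--Shargorodsky fact $\Speps B=\clos\speps B$ recalled in the tools section); conversely $\speps B\subseteq\limsup\speps A_n$ for each fixed $B$ by choosing, for $\lambda$ with $\mu(B-\lambda I)<\eps$, approximants $\lambda_n\to\lambda$ using the converse direction of \eqref{eq:muB}, and then taking the closure of the union and invoking Lemma~\ref{lem:Ti} to commute the closure past the union. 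The equality of the two right-hand sides in \eqref{eq:limsupspeps} is then exactly $\clos\bigcup\speps B=\bigcup\clos\speps B=\bigcup\Speps B$, again via Lemma~\ref{lem:Ti} together with $\Speps B=\clos\speps B$; and since $\limsup$ of a set sequence is always closed, the chain of equalities forces $\limsup\speps A_n=\limsup\Speps A_n$, so all sets in \eqref{eq:limsupspeps} and \eqref{eq:limsupSpeps} coincide.

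I expect the main obstacle to be the careful handling of the diagonal/subsequence argument that passes between ``$\exists$ subsequence with $\lambda_{n_k}\to\lambda$ and $\mu(A_{n_k}-\lambda_{n_k}I)\le\eps$'' and ``$\liminf_n\mu(A_n-\lambda I)\le\eps$'': one direction uses continuity of $\mu$ in $\lambda$ uniformly over the bounded family (which holds by the $1$-Lipschitz bound, independent of $n$), while the other direction needs, for a \emph{fixed} $\lambda$ realizing the $\liminf$, a way to perturb it slightly to stay inside $\speps A_n$ for infinitely many $n$ — this is where the ``$\speps$ vs.\ $\Speps$'' distinction bites and where the Globevnik--Shargorodsky equality $\Speps B=\clos\speps B$ is essential to close the loop. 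A secondary technical point worth stating explicitly is that $\cB(A_n)$ is a set of band-dominated operators on which $\mu(\cdot-\lambda I)$ is again continuous in $\lambda$, so the minimum over $B\in\cB(A_n)$ in \eqref{eq:muB} is continuous in $\lambda$ as a finite (it is finite by Proposition~\ref{prop:limsupAn-1}'s maximum statement applied through \eqref{eq:muB}) or at least lower-semicontinuous min, which legitimizes the level-set manipulations above. Everything else is routine bookkeeping with closures and unions.
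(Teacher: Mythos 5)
Your proposal is correct and follows essentially the same route as the paper's proof: the $\lambda$-shift identity for $\cB$ (Proposition~\ref{prop:Stab-lambdaI}), the $\liminf$-formula \eqref{eq:muB}, Lipschitz continuity of $\mu$, Globevnik--Shargorodsky together with Lemma~\ref{lem:Ti} to pass between $\speps$ and $\Speps$, and closedness of the $\limsup$. The only caveat is that the claimed ``equivalence'' of $\lambda\in\limsup\Speps A_n$ with $\liminf_n\mu(A_n-\lambda I)\le\eps$ does not hold by a direct diagonal argument in the backward direction (the boundary case $\liminf=\eps$ is the issue) --- but you correctly flag this in your final paragraph and route that direction through $\speps B\subseteq\limsup\speps A_n$, the equality $\Speps B=\clos\speps B$ and the closedness of $\limsup$, which is precisely how the paper assembles its closing chain of inclusions.
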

\begin{Proof}
Equality of \eqref{eq:limsupspeps} and \eqref{eq:limsupSpeps} follows from $\clos\speps B=\Speps B$, by Globevnik-Shargorodsky \cite{Globevnik,Shargorodsky08}.
\eqref{eq:limsupSpeps} follows from \eqref{eq:limsupspeps} by passing to the closure under the $\limsup$, which does not change the result, see e.g.~\cite[Prop.\,3.5]{HaRoSi2} in combination with Lemma \ref{lem:Ti}.
So it remains to prove \eqref{eq:limsupspeps}. We start with
\begin{equation} \label{eq:1}
\bigcup_{B\in\cB(A_n)}\speps B\ \subseteq\ \limsup\speps A_n.
\end{equation}
If $\lambda\in\cup_B\, \speps B$ then $\exists C\in\cB(A_n)$ with $\lambda\in\speps C$, i.e.
\[
\eps\ >\ \mu(C-\lambda I)\ \ge\ \min_{B\in\cB(A_n)}\mu(B-\lambda I) \stackrel{P.\,\ref{prop:Stab-lambdaI}}= \min_{D\in\cB(A_n-\lambda I)}\mu(D)\ \stackrel{\eqref{eq:muB}}=\ \liminf \mu(A_n-\lambda I_n),
\]

so that $\mu(A_n-\lambda I_n)<\eps$, i.e.~$\lambda\in\speps A_n$, holds for infinitely many $n\in\N$. So clearly, $\lambda\in\limsup\speps A_n$, and we have \eqref{eq:1}.
Taking the closure on both sides of \eqref{eq:1}, noting that $\limsup$ is already closed \cite[Prop.\,3.2]{HaRoSi2}, gives
\begin{equation} \label{eq:2}
\clos\bigcup_{B\in\cB(A_n)}\speps B\ \subseteq\ \limsup\speps A_n.
\end{equation}
Next we show
\begin{equation} \label{eq:3}
\limsup\speps A_n\ \subseteq\ \bigcup_{B\in\cB(A_n)}\Speps B.
\end{equation}
If $\lambda\in\limsup\speps A_n$ then $\lambda$ is a partial limit, i.e.~$\lambda=\lim \lambda_{n_k}$, of a sequence $(\lambda_n)_{n\in\N}$ with $\lambda_n\in\speps A_n$, 
i.e.~$\mu(A_n-\lambda_nI_n)<\eps$, for all $n\in\N$. By Lipschitz continuity of $\mu$, e.g. \cite[Lemma 2.1]{LiSchmeck:Haus}, it follows that
\[
\mu(A_{n_k}-\lambda I_{n_k})\ \le\ \mu(A_{n_k}-\lambda_{n_k} I_{n_k})+|\lambda-\lambda_{n_k}|\ <\ \eps+|\lambda-\lambda_{n_k}|,
\]
so that
\[
\eps\ \ge\ \liminf\mu(A_{n_k}-\lambda I_{n_k})\ \ge\ \liminf\mu(A_n-\lambda I_n)\ \stackrel{\eqref{eq:muB}}=\ \min_{B\in\cB(A_n)}\mu(B-\lambda I),
\]
whence $\lambda\in\Speps B$ for some $B\in\cB(A_n)$, and we have \eqref{eq:3}.

It remains to puzzle the pieces together, where $(GS)$ is by Globevnik-Shargorodsky \cite{Globevnik,Shargorodsky08}:
\begin{eqnarray*}
\limsup\speps A_n &\stackrel{\eqref{eq:3}}\subseteq& \bigcup_{B\in\cB(A_n)}\Speps B\
\stackrel{(GS)}=\ \bigcup_{B\in\cB(A_n)}\clos\speps B\\
&\subseteq& \clos \bigcup_{B\in\Stab(A_n)}\clos\speps B\ \stackrel{L.\,\ref{lem:Ti}}=\ 
\clos \bigcup_{B\in\cB(A_n)}\speps B\\
&\stackrel{\eqref{eq:2}}\subseteq& \limsup\speps A_n.
\end{eqnarray*}
This shows equality of all sets in this chain of inclusions and hence proves \eqref{eq:limsupspeps}.
\end{Proof}

Whether and when the two $\limsup$ in Proposition \ref{prop:limsupSpeps} are proper (Hausdorff) limits is also answered in Section~\ref{sec:limsup=lim}.

\subsection{The $\limsup$ of the condition numbers, $\kappa(A_n)=\|A_n\|\cdot\|A_n^{-1}\|$}
The situation is less satisfactory for the condition numbers. Their $\limsup$ is between the largest product, $\|B\|\|B^{-1}\|$, and the product of the largest factors, $\|B\|$ and $\|C^{-1}\|$, with $B,C\in\cB(A_n)$. 
\begin{proposition} \label{prop:limsupkappa}
For $(A_n)\in\BDS$ with $\cB(A_n)$ from \eqref{eq:BAn}, one has
\begin{eqnarray*}
\sup_{B\in\cB(A_n)}\kappa(B)\ \le\ \limsup \kappa(A_n)&\le& \limsup \|A_n\|\cdot\limsup\|A_n^{-1}\|\\[-3mm]
&=&\|\cB(A_n)\|_\infty\cdot\|\big(\cB(A_n)\big)^{-1}\|_\infty.
\end{eqnarray*}
\end{proposition}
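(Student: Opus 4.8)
The plan is to establish the two inequalities separately, the right one being essentially immediate and the left one requiring a small argument via the stacked operator. For the right-hand inequality, I would simply use that $\kappa(A_n)=\|A_n\|\cdot\|A_n^{-1}\|$, so that $\limsup\kappa(A_n)\le(\limsup\|A_n\|)(\limsup\|A_n^{-1}\|)$ by the elementary fact that $\limsup(a_nb_n)\le(\limsup a_n)(\limsup b_n)$ for nonnegative sequences (interpreting products with $\infty$ in the usual way when $(A_n)$ is not stable). The two factors are then identified with $\|\cB(A_n)\|_\infty$ and $\|(\cB(A_n))^{-1}\|_\infty$ by Propositions~\ref{prop:limsupAn} and~\ref{prop:limsupAn-1}, which gives the displayed equality on the right.

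For the left-hand inequality I would argue that each individual $\kappa(B)$ with $B\in\cB(A_n)$ is a lower bound for $\limsup\kappa(A_n)$; taking the supremum over $B$ then finishes. Fix $B\in\cB(A_n)$. If $(A_n)$ is not stable then some layer of some limit operator of $\oplus A_n$ is non-invertible, so the right side of Proposition~\ref{prop:limsupAn-1} is $+\infty$, forcing $\limsup\kappa(A_n)=\limsup\|A_n\|\cdot\|A_n^{-1}\|=\infty$ (using $\liminf\|A_n\|>0$, which holds since $\|A_n\|\ge\|B_0\|$ for some nonzero $B_0\in\cB(A_n)$ — or the statement is vacuous if $\cB(A_n)=\{0\}$), and the inequality is trivial. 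So assume $(A_n)$ is stable. Then by the proofs of Propositions~\ref{prop:limsupAn} and~\ref{prop:limsupAn-1} we simultaneously have, for this same $B$,
\[
\|B\|\ \le\ \|\cB(A_n)\|_\infty\ =\ \limsup\|A_n\|
\qquad\text{and}\qquad
\|B^{-1}\|\ \le\ \|(\cB(A_n))^{-1}\|_\infty\ =\ \limsup\|A_n^{-1}\|,
\]
but this only gives $\kappa(B)\le\limsup\|A_n\|\cdot\limsup\|A_n^{-1}\|$, which is the \emph{weaker} bound. To get $\kappa(B)\le\limsup\kappa(A_n)$ I instead need to realize $\|B\|$ and $\|B^{-1}\|$ \emph{along a common subsequence of $n$}. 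The route is: $B$ is a layer, $B=C_j$ say, of some limit operator $\oplus C_n\in\Lim(\oplus A_n)$, arising from a shift sequence $h=(h_k)$, $S_{-h_k}(\oplus A_n)S_{h_k}\to\oplus C_n$ pointwise. Because the layer index is one coordinate of $\Z^2$, the shifts $h_k$ induce a sequence of layer-indices $n(k)$ (after passing to a subsequence, either constant $=j$ or $\to\infty$), and the corresponding layers $A_{n(k)}$, suitably translated in the remaining coordinate, converge pointwise to $B=C_j$. Pointwise convergence of operators together with stability (uniform boundedness of inverses) yields both $\|B\|\le\liminf\|A_{n(k)}\|$ and $\|B^{-1}\|\le\liminf\|A_{n(k)}^{-1}\|$. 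Hence
\[
\kappa(B)\ =\ \|B\|\,\|B^{-1}\|\ \le\ \liminf_k\|A_{n(k)}\|\cdot\liminf_k\|A_{n(k)}^{-1}\|\ \le\ \liminf_k\kappa(A_{n(k)})\ \le\ \limsup_n\kappa(A_n),
\]
where the middle step uses $\liminf(a_kb_k)\ge\liminf a_k\cdot\liminf b_k$ for nonnegative sequences. Taking $\sup_{B\in\cB(A_n)}$ completes the left inequality.

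\textbf{Main obstacle.} The delicate point is the second paragraph: one must extract, for a given stability indicator $B\in\cB(A_n)=\Lay(\Lim(\oplus A_n))$, a \emph{single} subsequence $(A_{n(k)})$ of the original finite sections such that \emph{both} $\|A_{n(k)}\|\to\|B\|$ and $\|A_{n(k)}^{-1}\|\to\|B^{-1}\|$ (or at least liminf-dominate them). Proposition~\ref{prop:limsupAn} and Proposition~\ref{prop:limsupAn-1} on their own only deliver the norm and the inverse-norm via potentially \emph{different} subsequences, which is exactly why the two sides of the proposition need not coincide in general. The mechanism that saves the lower bound is that $B$ is literally a layer of a limit operator of $\oplus A_n$, so it is a simultaneous pointwise limit of translated copies of the operators $A_{n(k)}$ — the layer structure ties the two quantities to one and the same index sequence. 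Making this translation rigorous requires unwinding the definition of $\Lay(\Lim(\oplus A_n))$ (how a shift $h_k\in\Z^2$ acts on layers) together with the elementary facts that pointwise operator convergence is lower-semicontinuous for $\|\cdot\|$ and, under a uniform inverse bound, also for $\|(\cdot)^{-1}\|$. I expect this bookkeeping — and the careful handling of the non-stable and degenerate $\cB(A_n)=\{0\}$ cases — to be the only real work; everything else is the two $\limsup$/$\liminf$ inequalities for products of nonnegative sequences.
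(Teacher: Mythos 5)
Your proposal is correct and follows essentially the same route as the paper: the right-hand inequality is the elementary $\limsup$ product bound combined with Propositions~\ref{prop:limsupAn} and~\ref{prop:limsupAn-1}, and the left-hand inequality is obtained by realizing each $B\in\cB(A_n)$ as a pointwise limit of (translated) layers $A_{n(k)}$ along a single index sequence and then applying lower semicontinuity of the norm (Lemma~\ref{lem:liminf}) together with pointwise convergence of the inverses under stability — which is exactly the Polski's theorem argument the paper's sketch invokes. Your identification of the "common subsequence" issue as the crux is precisely why the paper cites Polski's theorem rather than just multiplying the two earlier propositions.
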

\begin{proof}[\bf Sketch of proof.]
For instable $(A_n)$, the statement is $\infty\le\infty\le\infty$; for stable $(A_n)$, it follows by Polski's theorem \cite[Prop.\,2.2]{BoSi2} and Lemma~\ref{lem:liminf}, bounding $\|B\|$ and $\|B^{-1}\|$ from above.
\end{proof}
An equality for $\limsup\kappa(A_n)$ in case $(A_n)\in\cS$ is derived in Proposition \ref{prop:conv} below.
The inequalities in Proposition \ref{prop:limsupkappa} can be strict, see Example \ref{ex:kappa}.
We do not know whether $\sup\kappa(B)$ is also a maximum. When exactly $\limsup=\lim$ holds for any of the three occurrences in Proposition \ref{prop:limsupkappa} is the topic of Section \ref{sec:limsup=lim}. If one of $\limsup \|A_n\|$ and $\limsup\|A_n^{-1}\|$ is a limit then the second ``$\le$'' sign is an equality. If even both are limits then also $\limsup\kappa(A_n)$ is a limit.

\begin{example} \label{ex:kappa}
{\bf a) } Let $A=BC$ and $A_n=(P_nBP_n)(P_nCP_n)$ for $n\in\N$ with
\[
B=\diag(\dots,D,D,\boxed 1,D,D,\dots)
\qquad\text{and}\qquad 
C=\diag(\dots, E, E,\boxed 1,E,E, \dots),
\]
where 
$D={2~~1\choose 0~\sfrac12}$, $E={2~~0\choose -1~\sfrac12}$ and $B_{00}=\boxed 1=C_{00}$.
Then, for $k\in\N$,
\[
A_{2k}=\diag(DE,\dots DE,\boxed 1, DE,\dots,DE)
\quad\text{and}\quad
A_{2k+1}=\diag(\sfrac14,A_{2k},4).
\]
Both $DE={~3~~~\sfrac12\choose -\sfrac12~\sfrac14}$ and $(DE)^{-1}={\sfrac14~-\sfrac12\choose \sfrac12~~~3}$ have norm\footnote{Meaning the induced operator norm for $p=2$. The tedious computation of the largest singular value can be replaced by noting that $\|DE\|_F=\|(DE)^{-1}\|_F=\sqrt{9+\frac14+\frac14+\frac1{16}}=\frac14\sqrt{153}\in(3,\frac{13}4)$ for the Frobenius norm and recalling that $\frac 1{\sqrt2}\|M\|_F\le\|M\|\le\|M\|_F$ for $2\times 2$ matrices $M$, so that $2<\frac3{\sqrt 2}<\|DE\|=\|(DE)^{-1}\|<\frac{13}4<4$.} $N:=\frac18(11+\sqrt{185})\approx 3.075$. So
\[
\begin{array}{rp{3mm}cp{3mm}cp{5mm}l}
\|A_n\|=N,&&\|A_n^{-1}\|=N,&&\kappa(A_n)=N^2&&\text{if $n\ge 2$ is even}\\
\text{and}\qquad\|A_n\|=4,&&\|A_n^{-1}\|=4,&&\kappa(A_n)=16&&\text{if $n\ge 3$ is odd},\\
\end{array}
\]
whence $\limsup\kappa(A_n)=16$. The set $\cB(A):=\Stab(A_n)$ consists of $A,F,G,H,J$ with
\[
F=\diag(DE,\dots),\quad G=\diag(\dots,DE),\quad H=\diag(\sfrac14, DE,\dots),\quad J=\diag(\dots,DE,4),
\]
so that the first ``$\le$'' sign in Proposition \ref{prop:limsupkappa} is ``$<$'' since $\|J\|=4$ maximizes the norm, $\|H^{-1}\|=4$ maximizes the norm of the inverse and $\kappa(H)=\kappa(J)=4N$ maximize the condition number.

{\bf b) } Instead, putting $A_n=(P_nBP_n)(P_nCP_n)$ with
\[
B=\diag(\dots,D,D,D,\dots)
\qquad\text{and}\qquad 
C=\diag(\dots, E, E,E, \dots),
\]
both $A_{2k}$ and $A_{2k+1}$ cut through a block on the left or right endpoint, leading to
\[
\begin{array}{rp{3mm}cp{3mm}cp{5mm}l}
\|A_n\|=4,&&\|A_n^{-1}\|=N,&&\kappa(A_n)=4N&&\text{if $n\ge 2$ is even}\\
\text{and}\qquad\|A_n\|=N,&&\|A_n^{-1}\|=4,&&\kappa(A_n)=4N&&\text{if $n\ge 3$ is odd},\\
\end{array}
\]
so that $\|A_n\|$ and $\|A_n^{-1}\|$ are both alternating (oppositely) between $4$ and $N$.
Even though $\limsup\kappa(A_n)=4N$ is a limit now, the second ``$\le$'' sign in Proposition \ref{prop:limsupkappa} is a ``$<$'' here.
\end{example}

\subsection{Pure finite sections: improved asymptotic results of $\|A_n\|$ and $\kappa(A_n)$} \label{sec:pureimproved}
First a standard result that is sometimes \cite{BoSi2,HaRoSi2,BoGru} stated as an add-on to Banach-Steinhaus:
\begin{lemma} \label{lem:liminf}
If $A\in L(X)$ and $\|A_nx\|\to\|Ax\|$ for all $x\in X$ then $\|A\|\le\liminf\|A_n\|$.
\end{lemma}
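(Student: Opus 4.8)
The statement is the ``lower semicontinuity of the operator norm under pointwise-norm convergence'', and the argument is elementary; there is no real obstacle, the only point to get right is why one obtains $\liminf$ (and not merely $\limsup$) on the right-hand side.

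First I would fix an arbitrary $x\in X$ and exploit the trivial pointwise estimate $\|A_nx\|\le\|A_n\|\,\|x\|$, valid for every $n\in\N$. Passing to $\liminf$ on both sides and using monotonicity of $\liminf$ together with the hypothesis $\|A_nx\|\to\|Ax\|$ (so that $\liminf_n\|A_nx\|=\lim_n\|A_nx\|=\|Ax\|$), I get
\[
\|Ax\|\ =\ \liminf_{n\to\infty}\|A_nx\|\ \le\ \liminf_{n\to\infty}\big(\|A_n\|\,\|x\|\big)\ =\ \|x\|\cdot\liminf_{n\to\infty}\|A_n\|.
\]
Here it does not matter whether $\liminf_n\|A_n\|$ is finite or $+\infty$; in the latter case the inequality is vacuous.

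Taking the supremum over all $x\in X$ with $\|x\|=1$ on the left then yields $\|A\|=\sup_{\|x\|=1}\|Ax\|\le\liminf_{n\to\infty}\|A_n\|$, which is the claim. (If one prefers, one may note in passing that, since $\|A_nx\|$ converges for each $x$, the sequence $(\|A_nx\|)_n$ is bounded for each $x$, so by the Banach--Steinhaus theorem $\sup_n\|A_n\|<\infty$; but this boundedness is not needed for the inequality above.) The subtle point worth stressing is that the bound is for $\liminf$ rather than $\limsup$: replacing the full sequence by a subsequence along which $\|A_n\|\to\limsup_n\|A_n\|$ would only give the weaker estimate $\|A\|\le\limsup_n\|A_n\|$, whereas the inequality $\|A_nx\|\le\|A_n\|\,\|x\|$ survives $\liminf$ because $a_n\le b_n$ for all $n$ implies $\liminf_n a_n\le\liminf_n b_n$.
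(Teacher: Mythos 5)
Your proof is correct and is essentially the same elementary argument as the paper's: both rest on the inequality $\|A_nx\|\le\|A_n\|\,\|x\|$ combined with the hypothesis $\|A_nx\|\to\|Ax\|$. The only cosmetic difference is that the paper first picks a near-maximizing unit vector $x$ and then runs an $\eps/2$-argument, whereas you take $\liminf$ for each fixed $x$ and afterwards take the supremum over unit vectors; the content is identical.
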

~\\[-17mm]
\begin{Proof}
Let $\eps>0$ and $x\in X$ with $\|x\|=1$ and $\|Ax\|\stackrel{\eps/2}\approx \|A\|$, where $a \stackrel{\delta}\approx b$ means $|a-b|<\delta$. For sufficiently large $n\in\N$, by assumption, $\|A_n x\|\stackrel{\eps/2}\approx\|Ax\|$, so that $\|A\|\stackrel{\eps}\approx\|A_n x\|\le\|A_n\|$.\qedhere
\end{Proof}
As a consequence, for pure finite sections, we can improve Propositions \ref{prop:limsupAn} and \ref{prop:limsupkappa} as follows:
\begin{proposition} \label{prop:limAn}
For the pure finite sections, $A_n=P_n AP_n$, of an operator $A\in\BDO$, 
\begin{eqnarray*}
\lim\|A_n\| &=& \|A\|,\\
\limsup\|A_n^{-1}\| &=& \|\big(\Stab(A_n)\big)^{-1}\|_\infty
\qquad\text{and}\\
\limsup\kappa(A_n) &=& \|A\|\cdot \|\big(\Stab(A_n)\big)^{-1}\|_\infty\,.
\end{eqnarray*}
\end{proposition}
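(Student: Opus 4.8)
The plan is to handle the three formulas in turn. Only the first, $\lim\|A_n\|=\|A\|$, requires a genuinely new argument; the other two follow at once from it together with the already established Propositions~\ref{prop:limsupAn-1} and \ref{prop:limsupkappa}, once we note that the pure finite section sequence $(A_n)=(P_nAP_n)$ is one of the generators of the finite section algebra $\cS$, hence $(A_n)\in\cS\subset\BDS$, and so, by \eqref{eq:BAn}, $\cB(A_n)=\Stab(A_n)$.

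For the norm: since $P_n$ is a coordinate projection on $\ell^p(\Z)$, we have $\|A_n\|=\|P_nAP_n\|\le\|A\|$ for every $n$, hence $\limsup\|A_n\|\le\|A\|$. For the reverse inequality I would invoke Lemma~\ref{lem:liminf}: it suffices to check that $\|A_nx\|\to\|Ax\|$ for each $x\in X$, and indeed $A_nx\to Ax$ in norm because $\|A_nx-Ax\|\le\|A\|\,\|P_nx-x\|+\|P_nAx-Ax\|\to 0$, using $\|P_ny-y\|\to0$ for all $y\in X$. Lemma~\ref{lem:liminf} then gives $\|A\|\le\liminf\|A_n\|$, so $\|A_n\|\to\|A\|$. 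For $p\in\{1,\infty\}$, where the strong convergence $P_n\to I$ need not hold, the same lower bound instead follows by first approximating $A$ in operator norm by a band operator $B$: if $x$ is finitely supported and nearly norms $B$, then $P_nBP_nx=Bx$ for all large $n$, so $\liminf\|P_nBP_n\|\ge\|B\|$, and one passes back to $A$.

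For the norm of the inverse, Proposition~\ref{prop:limsupAn-1} applies verbatim to $(A_n)=(P_nAP_n)\in\cS$ and, with $\cB(A_n)=\Stab(A_n)$, yields $\limsup\|A_n^{-1}\|=\|(\Stab(A_n))^{-1}\|_\infty$. For the condition number I would combine the first two parts: assuming $A\neq 0$ we have $\|A_n\|\to\|A\|\in(0,\infty)$, and since $\limsup(a_nb_n)=a\cdot\limsup b_n$ for any $a_n\to a\in(0,\infty)$ and $b_n\in[0,\infty]$, it follows that $\limsup\kappa(A_n)=\limsup(\|A_n\|\,\|A_n^{-1}\|)=\|A\|\cdot\limsup\|A_n^{-1}\|=\|A\|\cdot\|(\Stab(A_n))^{-1}\|_\infty$; the degenerate case $A=0$ (where $A_n=0$ is never invertible, so both sides are $\infty$) is settled by convention. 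Equivalently, this equality is exactly the case of Proposition~\ref{prop:limsupkappa} in which the middle ``$\le$'' becomes an equality because $\limsup\|A_n\|$ is a genuine limit. I do not expect a real obstacle here: the only points needing care are the justification of $A_nx\to Ax$ outside $p\in(1,\infty)$ and the trivial case $A=0$ in the last formula.
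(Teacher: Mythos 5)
Your proposal is correct and follows essentially the same route as the paper: the upper bound $\|A_n\|\le\|A\|$ from $\|P_n\|=1$, the lower bound from Lemma~\ref{lem:liminf} applied to $A_n\to A$, and then Proposition~\ref{prop:limsupAn-1} together with $\kappa(A_n)=\|A_n\|\,\|A_n^{-1}\|$ for the remaining two formulas. Your extra remarks (the band-operator approximation for $p\in\{1,\infty\}$, where the paper instead defers to the $\cP$-theory framework of Section~\ref{sec:Banach}, and the degenerate case $A=0$) are sensible refinements but do not change the argument.
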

\begin{Proof} By $A_n\to A$ and Lemma \ref{lem:liminf}, it follows that $\|A\|\le\liminf\|A_n\|$.
Morover, by $\|P_n\|=1$, we have $\|A_n\|= \|P_nAP_n\|\le\|A\|$. It follows that $\|A\|\le\liminf\|A_n\|\le\limsup\|A_n\|\le\|A\|$, so that $\lim\|A_n\|$ exists and equals $\|A\|$.
The rest is by Proposition \ref{prop:limsupAn-1} and $\kappa(A_n)=\|A_n\|\|A_n^{-1}\|$.\qedhere
\end{Proof}
Note that $\limsup\kappa(A_n)$ is now given by an equality and that it is a limit if and only if $\limsup\|A_n^{-1}\|$ is a limit. When exactly that is the case is answered in Section~\ref{sec:limsup=lim} below.

\section{Dominant directions of $\oplus A_n$ and the definition of $\Stab(A_n)$} \label{sec:domdir}

The propositions in Section \ref{sec:limsup} ask for the limit operators of $\oplus A_n$. Let us study this set and identify its maximal elements in terms of $\|B\|$ and $\|C^{-1}\|$. Our analysis of the latter is limited to the algebra $\cS$. We start with pure finite sections and then proceed to the composed case.
\begin{definition} \label{def:dom}
Given two operators, $A$ and $B$, we will say that $A$ {\sl dominates} $B$ (or that $B$ {\sl is dominated by} $A$) if $\|A\|\ge\|B\|$ and $\|A^{-1}\|\ge\|B^{-1}\|$, while keeping the convention of putting $\|A^{-1}\|=\infty$ in case of non-invertibility.
\end{definition}
In particular, if $A$ dominates $B$ then also invertibility of $A$ implies that of $B$. So if, among all $B_n\in \Lay(\Lim(\oplus A_n))$, we ignore the $B_n$ that are dominated by others from the set, the remaining set still captures stability of $(A_n)$ and the $\limsup$'s of Section \ref{sec:limsup}.

\subsection{First: pure finite sections, $A_n=P_nAP_n$}
For $A_n=P_nAP_n$ with $A\in\BDO$, shift $\oplus A_n$ along a sequence $h=(h_k)=(\,{i_k\choose j_k}\,)$ in $\Z^2$:
\begin{eqnarray} \nonumber
(\oplus_n A_n)_h &\ot&
S_{-{i_k\choose j_k}}(\oplus_n A_n)S_{{i_k\choose j_k}} \ =\ \oplus_n\, S_{-i_k} A_{n+j_k} S_{i_k}\ =\ \oplus_n\, S_{-i_k} P_{n+j_k} A P_{n+j_k} S_{i_k}\\ 
&=&\oplus_n\, (S_{-i_k} P_{n+j_k} S_{i_k}) (S_{-i_k} A S_{i_k}) (S_{-i_k} P_{n+j_k} S_{i_k})\ \to\ \oplus_n\, B_n \label{eq:oplusBn}
\end{eqnarray}
as $k\to\infty$ if the pointwise limit exists. By uniqueness of the limit, $(\oplus A_n)_h=\oplus B_n$.
\begin{proposition} \label{prop:a-f1}
Let $(A_n)$ be the sequence of pure finite sections, $A_n=P_nAP_n$, of an operator $A\in\BDO$ and let $\oplus B_n$ be a limit operator of $\oplus A_n$ with respect to a sequence $h$ in $\Z^2$.
\begin{abc}
\item
Depending on the direction of $h$ in $\Z^2$, the layers $B_n$ are of the form
\begin{itemize}
\itemsep0mm
\item[(a)]\ 
\begin{tikzpicture}[scale=0.3]
\fill[gray!20] (0,0) -- (1,1) -- (-1,1);
\draw[-latex,line width=0.15mm] (0,0) -- (0,1);
\end{tikzpicture}
\quad the operator $A$ itself,

\item[(b)]\ 
\begin{tikzpicture}[scale=0.3]
\fill[gray!20] (0,0) -- (1,1) -- (-1,1);
\draw[-latex,line width=0.15mm] (0,0) -- (0.4,1);
\end{tikzpicture}
\quad a limit operator of $A$,

\item[(c)]\ 
\begin{tikzpicture}[scale=0.3]
\fill[gray!20] (0,0) -- (1,1) -- (-1,1);
\draw[-latex,line width=0.15mm] (0,0) -- (1,1);
\end{tikzpicture}
\quad $P_-A_gP_-$ with $A_g\in\Lim_+(A)$,

\item[(d)]\ 
\begin{tikzpicture}[scale=0.3]
\fill[gray!20] (0,0) -- (1,1) -- (-1,1);
\draw[-latex,line width=0.15mm] (0,0) -- (-1,1);
\end{tikzpicture}
\quad $P_+A_gP_+$ with $A_g\in\Lim_-(A)$,

\item[(e)]\ 
\begin{tikzpicture}[scale=0.3]
\fill[gray!20] (0,0) -- (1,1) -- (-1,1);
\draw[-latex,line width=0.15mm] (0,0) -- (1,0.4);
\end{tikzpicture}
\quad $0$\quad or

\item[(f)]\ translates, $B_n=S_{-c} B'_n S_c$, of any of the operators $B'_n$ in (a)--(e).
\end{itemize}

\item
Case (b) is dominated by (a) and case (f) is dominated by the previous cases. 
\end{abc}
\end{proposition}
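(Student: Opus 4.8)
I would prove a) by passing to a subsequence of $h$ on which the two coordinate sequences (and their sum and difference) settle down, then reading each layer $B_n$ off the explicit formula in \eqref{eq:oplusBn}, the five shapes in a) corresponding to the five qualitative behaviours of that subsequence; b) then follows from two elementary observations, one about limit operators and one about translates.

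\textbf{Reduction to a nice subsequence.} Since $S_{-h_k}(\oplus A_n)S_{h_k}\to\oplus B_n$ pointwise and pointwise convergence (to the same limit) is inherited by subsequences, we may freely thin out $h=(\binom{i_k}{j_k})$; this is harmless because $\oplus B_n$, being the limit along all of $h$, is the limit along every subsequence, so whatever classification we obtain is the right one. By Bolzano-Weierstra\ss\ and the standard subsequence extraction of limit operator theory \cite{RaRoSiBook,Li:Book} we may assume that each of $(i_k)$, $(j_k)$, $(i_k+j_k)$, $(i_k-j_k)$ is eventually constant or tends to $+\infty$ or to $-\infty$, and that $S_{-i_k}AS_{i_k}$ converges pointwise, to an operator $A_g\in\Lim_+(A)$ if $i_k\to+\infty$, to $A_g\in\Lim_-(A)$ if $i_k\to-\infty$, and to $S_{-c}AS_c$ if $i_k\equiv c$ eventually. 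As $|h_k|\to\infty$, at least one of $(i_k),(j_k)$ is unbounded; the ``direction of $h$'' in the statement is exactly which case of this dichotomy we land in.

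\textbf{Identifying the layers.} By \eqref{eq:oplusBn} the $n$-th layer of $S_{-h_k}(\oplus A_n)S_{h_k}$ is $S_{-i_k}A_{n+j_k}S_{i_k}$, so $B_n=\lim_k S_{-i_k}A_{n+j_k}S_{i_k}$. If $(j_k)$ is bounded then $(i_k)$ is unbounded and $A_{n+j_k}$ is eventually the fixed finite-rank operator $P_{n+j_k}AP_{n+j_k}$ (or $0$), whose limit operators all vanish, so $B_n=0$; likewise $B_n=0$ when $j_k\to-\infty$, since then $A_{n+j_k}=0$ eventually. So suppose $j_k\to+\infty$; then $n+j_k\to+\infty$ for every $n\in\Z$, $A_{n+j_k}=P_{n+j_k}AP_{n+j_k}$, and using $S_{-i_k}P_mS_{i_k}=P_{(-m-i_k)..(m-i_k)}$ we get
\[
B_n=\lim_k\,P_{I_k}\,(S_{-i_k}AS_{i_k})\,P_{I_k}=\lim_k\,P_{I_k}\,A_g\,P_{I_k},\qquad I_k:=(-n-i_k-j_k)..(n-i_k+j_k),
\]
an interval whose width $2(n+j_k)$ tends to $\infty$. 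I would now split on $(i_k)$. If $i_k\equiv c$, both endpoints of $I_k$ escape to $\mp\infty$, so $P_{I_k}\to I$ strongly and $B_n=S_{-c}AS_c$ --- case (a) if $c=0$, case (f) otherwise. If $i_k\to+\infty$, the left endpoint of $I_k$ tends to $-\infty$ while the right endpoint is $n-(i_k-j_k)$: when $i_k-j_k\to-\infty$ it tends to $+\infty$, so $P_{I_k}\to I$ and $B_n=A_g\in\Lim_+(A)$ --- case (b); when $i_k-j_k\to+\infty$ it tends to $-\infty$, $I_k$ drifts entirely off to $-\infty$, $P_{I_k}\to0$ strongly and $B_n=0$ --- case (e); when $i_k-j_k\equiv c'$ it is the constant $n-c'$, so $P_{I_k}\to P_{..(n-c')}$ and $B_n=P_{..(n-c')}A_gP_{..(n-c')}$, which is $P_-A_gP_-$ with $A_g\in\Lim_+(A)$ when $n=c'$ --- case (c) --- and a translate of such an operator otherwise --- case (f). The subcase $i_k\to-\infty$ is the mirror image: interchange left and right endpoints, replace $(i_k-j_k)$ by $(i_k+j_k)$, $\Lim_+(A)$ by $\Lim_-(A)$, and $P_-$ by $P_+$; it produces cases (b), (e) and (d)/(f). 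This runs through all directions, proving a).

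\textbf{Domination (b), and the main obstacle.} For case (b), $B_n=A_g$ is a limit operator of $A$, and $A$ dominates each of its limit operators, $\|A_g\|\le\|A\|$ and $\|A_g^{-1}\|\le\|A^{-1}\|$; this is the familiar norm- and inverse-preserving identification of the coset $A+K(X)$ with $\Lim(A)$ (applied to $A$ and to its Banach space adjoint), see \cite{RaRoSiBook,Li:Book}. For case (f), $B_n=S_{-c}B'_nS_c$ with $B'_n$ of one of the types (a)--(e), and since $S_c$ is an invertible isometry we get $\|B_n\|=\|B'_n\|$ and $\|B_n^{-1}\|=\|(B'_n)^{-1}\|$, so $B_n$ is dominated by $B'_n$, an operator from one of the previous cases. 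The only part that requires genuine care is the bookkeeping in ``Identifying the layers'': one must decide correctly, from the asymptotics of $(i_k)$ and of $(i_k\pm j_k)$, whether the finite window $I_k$ ultimately exhausts $\Z$ (giving $A_g$), retains a half-line $..m$ or $m..$ (giving a one-sided compression $P_-A_gP_-$ or $P_+A_gP_+$, up to translation), or slides off to $\pm\infty$ (giving $0$) --- but once the subsequence reduction is in place, each of these three limits is routine.
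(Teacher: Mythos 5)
Your proof is correct and follows essentially the same route as the paper's: both reduce to the explicit formula $B_n\leftarrow P_{I_k}\,(S_{-i_k}AS_{i_k})\,P_{I_k}$ from \eqref{eq:oplusBn} and classify the limit by the asymptotics of the endpoints of the shifted window $I_k$, with part b) handled by the standard facts $\|A_g\|\le\|A\|$, $(A_g)^{-1}=(A^{-1})_g$ and shift-invariance of norms. The only (cosmetic) differences are that you keep a general layer index $n$ and do an upfront subsequence extraction making all of $(i_k),(j_k),(i_k\pm j_k)$ monotone or constant, whereas the paper normalises to $B_0$ and absorbs bounded perturbations of the canonical directions into case (f).
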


\begin{remark} \label{rem:a-f}
{\bf a) }
Since every $A_n$ is only of interest as an operator on $\ell^p(-n..n)$, we can ignore the case (e) and we have to understand the operators (c) as operators on $\im P_-=\ell^p(..0)$ and the operators (d) as operators on $\im P_+=\ell^p(0..)$.
Alternatively, extend every $A_n$ to $\ell^p(\Z)$ by $c$ times the identity before forming $\oplus A_n$. Choose $c\in\R$ such that it does not change the property of interest, e.g. put $c=0$ in \eqref{eq:limsupAn} and $c>\sup\|A_n\|$ in \eqref{eq:limsupAn-1}. This way was followed in \cite[\S6]{HagLiSei}. In the current paper we ignore the $n$th layer outside of $-n..n$ whence, in the limit, we have to be a bit
flexible and tolerant about having operators on $\ell^p(\Z), \ell^p(0..)$ and $\ell^p(..0)$ in the same set.

{\bf b) }
It is tempting to classify the cases (a)--(f) via the angle\footnote{Indeed, by compactness of the unit circle, $h_k/|h_k|$ has a convergent subsequence that we can pass to without changing the limit operator $(\oplus A_n)_h$, hence giving $h$ an angle, asymptotically.} that $h$ asymptotically encloses with the $n$-axis: (a) is for $0^\circ$,  (b) for $(0^\circ,45^\circ)$, (c) and (d) are for $\pm 45^\circ$, respectively, (e) is for $\not\in [-45^\circ,45^\circ]$, and (f) is a finite shift of a sequence in one of (a)--(e).
While this is roughly what happens, it is incorrect since (b) and (e) also reach $0^\circ$ and $\pm 45^\circ$. E.g., $h_k={k\choose k^2}$ has angle $0^\circ$ with the $n$-axis but is in case (b), not (a)+(f), or ${k^2\pm k\choose k^2}$ has angle $45^\circ$ and is (b), resp.~(e), not (c)+(f).
So we stick with the following clumsy way of distinguishing (a)--(f). 
\end{remark}

\begin{proof}[\bf Proof of Proposition \ref{prop:a-f1}]
{\bf a) } Let $A_n=P_nAP_n$ and let the sequence $h=(h_k)=(\,{i_k\choose j_k}\,)$ in $\Z^2$ be such that $|h_k|\to\infty$ and that the limit operator $(\oplus A_n)_h=:\oplus B_n$ exists.
By \eqref{eq:oplusBn}, for each $n\in\Z$,
\begin{equation} \label{eq:B0}
B_n\ \ot\  (S_{-i_k} P_{n+j_k} S_{i_k}) (S_{-i_k} A S_{i_k}) (S_{-i_k} P_{n+j_k} S_{i_k})
\quad\text{as}\quad k\to\infty.
\end{equation}
Without loss, we can focus on $B_0$ since every other layer, say $B_m$, is the $0$th layer, $C_0$, of
\[
\oplus C_n\ =\ \oplus B_{n+m}\ =\ S_{-{0\choose m}}(\oplus B_n)S_{0\choose m}\ =\ (\oplus A_n)_{h+{0\choose m}},
\]
which is just another limit operator of $\oplus A_n$.
\begin{itemize}
\itemsep0mm
\item[(a)]\ If $h$ is a subsequence of $\N\cdot{0\choose 1}$, i.e.~$h_k={0\choose j_k}$ with $j_k\to+\infty$ then $S_{-i_k}=I=S_{i_k}$, so that, by \eqref{eq:B0} with $n=0$,
$B_0\ot P_{j_k} A P_{j_k} \to A$ as $k\to\infty$, whence $B_0=A$.

\item[(b)]\ If $|i_k|\to\infty$ but $|i_k|-j_k\to-\infty$, so that
$i_k-j_k\to-\infty$ and $-i_k-j_k\to-\infty$ then
\[
S_{-i_k} P_{j_k} S_{i_k}\ =\ P_{-j_k-i_k..j_k-i_k}\ \to\ I
\quad\text{as}\quad k\to\infty,
\quad\text{so that, by \eqref{eq:B0},}\quad
B_0=A_i,
\]
the limit operator of $A$ with respect to the sequence $i=(i_k)_k$. 

\item[(c)]\ If $h$ is a subsequence of $\N\cdot{1\choose 1}$, i.e.~$h_k={j_k\choose j_k}$ with $j_k\to+\infty$ then
\[
S_{-i_k} P_{j_k} S_{i_k}\ =\ P_{-2j_k..0}\ \to\ P_{..0}\ =\ P_-
\ \text{as}\ k\to\infty,
\quad\text{so that, by \eqref{eq:B0},}\quad
B_0=P_-A_jP_-
\]
with $A_j$ denoting the limit operator of $A$ w.r.t.~the sequence $j=(j_k)_k\to+\infty$.

\item[(d)]\ If $h$ is a subsequence of $\N\cdot{-1\choose 1}$, i.e.~$h_k={-j_k\choose j_k}$ with $j_k\to+\infty$ then
\[
S_{-i_k} P_{j_k} S_{i_k}\ =\ P_{0..2j_k}\ \to\ P_{0..}\ =\ P_+
\ \text{as}\  k\to\infty,
\quad\text{so that, by \eqref{eq:B0},}\quad
B_0=P_+A_{-j}P_+
\]
with  $A_{-j}$ denoting the limit operator of $A$ w.r.t.~the sequence $-j=(-j_k)\to-\infty$. 

\item[(e)]\ If $|i_k|-j_k\to+\infty$, i.e. $i_k-j_k\to +\infty$ or $-i_k-j_k\to+\infty$ then
\[
S_{-i_k} P_{j_k} S_{i_k}\ =\ P_{-j_k-i_k..j_k-i_k}\ \to\ 0
\quad\text{as}\quad k\to\infty,
\quad\text{so that}\quad
B_0=0.
\]

\item[(f)]\ If $h$ differs from one of (a)--(e) by a sequence $g$ with a bounded subsequence then pass to a subsequence of $h$, where $g$ is constant, say $g\equiv c$, (Bolzano-Weierstra\ss\ in $\Z^2$), not changing the limit operator, $(\oplus A_n)_h=\oplus B_n$.
So $B_0=S_{-c}B'_0S_c$ with $B'_0$ from one of (a)--(e).
\end{itemize}

{\bf b) } For (b), it remains to note that $\|A\|\ge\|A_g\|$ for every limit operator $A_g$ of $A$ and that $(A_g)^{-1}=(A^{-1})_g$ if $A$ is invertible, see e.g.~\cite{RaRoSi1998}, whence also $\|A^{-1}\|\ge\|(A^{-1})_g\|=\|(A_g)^{-1}\|$.

For (f), note that $\|S_{-c}BS_c\|=\|B\|$ and $\|(S_{-c}BS_c)^{-1}\|=\|S_{-c}B^{-1}S_c\|=\|B^{-1}\|$.
\end{proof}

After Proposition \ref{prop:a-f1} we are left with the layers $B_n$ of type (a), (c) and (d), and this is what we will collect in $\Stab(A_n)$.
In the case of pure finite sections, $A_n=P_nAP_n$, checking these three directions is enough, by Proposition \ref{prop:a-f1}, to find, among $\Lay(\Lim(\oplus A_n))$,
\begin{itemize} \itemsep-1mm
\item a maximizer of $\|B_n\|$,
\item a maximizer of $\|B_n^{-1}\|$ and
\item a subset of operators $B_n$ whose invertibility implies that of all the others.
\end{itemize}
In Proposition \ref{prop:a-f2} below we show that the same three directions, ${0\choose 1}$, ${1\choose 1}$ and ${-1\choose 1}$, are sufficient for the same purpose also for composed finite sections, $(A_n)\in\cS$. Wrapping up, we evaluate
\begin{equation} \label{eq:B0gen}
\oplus B_n \ot S_{-{i_k\choose j_k}}(\oplus A_n)S_{{i_k\choose j_k}}  = \oplus\, S_{-i_k} A_{n+j_k} S_{i_k},
\  \text{ so that }\  B_0 \ot  S_{-i_k} A_{j_k} S_{i_k}\text{ as }k\to\infty,
\end{equation}
for $j_k\to+\infty$ and (a) $i_k=0$, (c) $i_k=j_k$ and (d) $i_k=-j_k$.
So here is the definition:
\begin{definition} \label{def:Stab}
For a sequence $(A_n)\in\cS$, let $\,\Stab(A_n)$ denote the set consisting of the pointwise limits of all subsequences of
\[
A_n,\qquad S_{-n}A_nS_n\qquad\text{and}\qquad S_nA_nS_{-n}.
\]
We call the elements of $\Stab(A_n)$ the {\sl stability indicators} of $(A_n)$.
\end{definition}
\begin{remark}
{\bf a) } Every sequence $(A_n)\in\cS$ converges pointwise. So, for the first sequence in Definition \ref{def:Stab}, there is actually no need to pass to subsequences. The pointwise limit is the operator $A$ that was to be approximated in the first place. In particular, $A\in\Stab(A_n)$ holds.

{\bf b) } In cases, where also $S_{-n}A_nS_n$ and $S_nA_nS_{-n}$ converge, e.g.~for $(A_n)\in\clos\alg\{P_nLP_n:L \text{ banded Laurent, see Example \ref{ex:Laurent}}\}$, $\Stab(A_n)$ consists of just three elements and all $\limsup$'s in Section~\ref{sec:limsup} are limits, by the results of Section~\ref{sec:limsup=lim} below.

{\bf c) } That at least $\|A_n\|$ converges in situation b) can also be shown by applying Lemma~\ref{lem:liminf} to the limits $A,\, B,\, C$ of $A_n,\ S_{-n}A_nS_n$ and $S_nA_nS_{-n}$, using that all $S_k$ are isometries. Indeed,
\[
\|A\|,\|B\|, \|C\| \stackrel{L.\,\ref{lem:liminf}}\le \liminf\|A_n\|\ \le\ \limsup\|A_n\| \stackrel{P.\,\ref{prop:limsupAn}}= \|\Stab(A_n)\|_\infty\ =\ \max\{\|A\|,\|B\|, \|C\|\},
\]
whence $\liminf\|A_n\|= \limsup\|A_n\|=\lim\|A_n\|= \max\{\|A\|,\|B\|, \|C\|\}$.
\end{remark}
\begin{proposition} \label{prop:Stabpure}
For pure finite sections, $A_n=P_nAP_n$ with $A\in\BDO$, we have
\[
\Stab(A_n)\ =\ \Big\{\ A,\ \ P_-A_fP_-,\ \ P_+A_gP_+\ \ :\ \ A_f\in\Lim_+(A), \ A_g\in\Lim_-(A)\ \Big\}.
\]
\end{proposition}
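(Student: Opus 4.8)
The strategy is to evaluate the three sequences featuring in Definition~\ref{def:Stab} directly for $A_n=P_nAP_n$ and to read off all of their subsequential pointwise limits. For the first sequence there is nothing to do beyond a standard fact: since $p\in(1,\infty)$ we have $P_n\to I$ pointwise on $\ell^p(\Z)$, hence $P_nAP_nx-Ax=P_n(AP_nx-Ax)+(P_nAx-Ax)\to 0$ for every $x$, using $\|P_n\|=1$. So $A_n\to A$ pointwise, and the first sequence contributes exactly $\{A\}$ to $\Stab(A_n)$, in line with the remark following Definition~\ref{def:Stab}.

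For the second sequence I would factor $S_{-n}A_nS_n=(S_{-n}P_nS_n)(S_{-n}AS_n)(S_{-n}P_nS_n)$ and compute, with the shift convention of the paper, $S_{-n}P_nS_n=P_{-2n..0}\to P_-$ pointwise, while $\|S_{-n}AS_n\|=\|A\|$ stays bounded. Given any subsequence, the Bolzano-Weierstra\ss\ existence theorem for limit operators (recorded just after the Definition of $\Lim$) lets me pass to a further subsequence along which $S_{-n}AS_n\to A_f$ for some $A_f\in\Lim_+(A)$; by joint continuity of multiplication on norm-bounded sets in the pointwise topology, $S_{-n}A_nS_n\to P_-A_fP_-$ along it, and since a convergent subsequence shares its limit with all of its further subsequences, every subsequential limit of $(S_{-n}A_nS_n)$ has this form. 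Conversely, any $A_f\in\Lim_+(A)$ equals $\lim_k S_{-g_k}AS_{g_k}$ for some subsequence $g=(g_k)$ of $(1,2,\dots)$, and evaluating along $n=g_k$ gives $S_{-n}A_nS_n\to P_-A_fP_-$. Hence the subsequential limits of $(S_{-n}A_nS_n)$ are precisely the operators $P_-A_fP_-$ with $A_f\in\Lim_+(A)$. The third sequence is handled identically: $S_nP_nS_{-n}=P_{0..2n}\to P_+$, while $S_nAS_{-n}=S_{-(-n)}AS_{(-n)}$ has subsequential limits exactly $\Lim_-(A)$, yielding the operators $P_+A_gP_+$ with $A_g\in\Lim_-(A)$. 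Taking the union of the three contributions gives the claimed formula for $\Stab(A_n)$.

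The only point that needs care -- though a routine one -- is the interchange of limits used twice above, namely that the pointwise limit of the triple product equals the product of the pointwise limits of the factors. This rests on the uniform bound $\|S_{\mp n}AS_{\pm n}\|=\|A\|$ together with the standard fact that $B_n\to B$ and $C_n\to C$ pointwise with $\sup_n\|B_n\|<\infty$ force $B_nC_n\to BC$ pointwise; applied to the $P$-factors and the limit-operator factors, and combined with the nonemptiness of $\Lim_g(A)$ along every subsequence $g$ of $(1,2,\dots)$ and of $(-1,-2,\dots)$, it both pins down every subsequential limit and shows each element of $\Lim_\pm(A)$ is attained. One could alternatively obtain the proposition as the three ``surviving'' cases (a), (c), (d) of Proposition~\ref{prop:a-f1}, but the direct computation above is shorter and matches Definition~\ref{def:Stab} verbatim.
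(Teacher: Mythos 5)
Your computation is correct and is essentially the paper's own argument: the paper leaves Proposition~\ref{prop:Stabpure} without a separate proof because it is exactly the specialization of \eqref{eq:B0gen} and cases (a), (c), (d) in the proof of Proposition~\ref{prop:a-f1} (where $P_{j_k}\to I$, $S_{-j_k}P_{j_k}S_{j_k}=P_{-2j_k..0}\to P_-$ and $S_{j_k}P_{j_k}S_{-j_k}=P_{0..2j_k}\to P_+$ are computed, with the same uniform-boundedness argument for passing limits through products) to the three directions singled out in Definition~\ref{def:Stab}. Your direct verification against the definition, including the two-sided identification of the subsequential limits of $S_{\mp n}AS_{\pm n}$ with $\Lim_\pm(A)$, matches this in all essentials.
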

\begin{proposition} \label{prop:Stab-lambdaI}
In either case,\ \ {\bf a)\,} $(A_n)\in\cS$ and $\cB(A_n):=\Stab(A_n)$ or\\
{\bf b)\,} $(A_n)\in\BDS$ and $\cB(A_n):=\Lay(\Lim(\oplus A_n))$, we have
\[
\cB(A_n-\lambda I_n)\ =\ \cB(A_n)-\lambda I\ :=\ \{B-\lambda I:B\in\cB(A_n)\},\qquad \lambda\in\C.
\]
\end{proposition}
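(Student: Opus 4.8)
The plan is to prove both cases simultaneously by tracking how the shift-conjugation and limit-operator constructions behave under the additive perturbation $A_n \mapsto A_n - \lambda I_n$. The key observation is that $\lambda I$ is fixed (up to the identification of $I_n$ on $\ell^p(-n..n)$ with $I$ on $\ell^p(\Z)$) under all the operations used to build $\cB(A_n)$: pointwise limits, shift conjugation $B \mapsto S_{-k}BS_k$, and the layer/$\oplus$ constructions. So the perturbation simply passes through every step.

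For case (b), with $\cB(A_n) = \Lay(\Lim(\oplus A_n))$: first I would note that $\oplus(A_n - \lambda I_n) = (\oplus A_n) - \lambda I_Y$ on $Y = \ell^p(\Z^2)$ (with the convention of Proposition~\ref{prop:stable}(iii), where layers are extended by a multiple of the identity — crucially, the perturbation of the extension constant is again just $\lambda$). Then, since $S_{-h_k}(B - \lambda I)S_{h_k} = S_{-h_k}BS_{h_k} - \lambda I$, pointwise convergence $S_{-h_k}(\oplus A_n)S_{h_k} \to \oplus B_n$ is equivalent to $S_{-h_k}(\oplus A_n - \lambda I_Y)S_{h_k} \to \oplus B_n - \lambda I_Y$, so $\Lim(\oplus A_n - \lambda I_Y) = \{D - \lambda I_Y : D \in \Lim(\oplus A_n)\}$. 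Finally the $\Lay$ operation commutes with subtracting $\lambda I$ because the $n$-th layer of $\oplus B_n - \lambda I_Y$ is $B_n - \lambda I$. Chaining these three equalities gives $\cB(A_n - \lambda I_n) = \cB(A_n) - \lambda I$.

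For case (a), with $\cB(A_n) = \Stab(A_n)$ and $(A_n)\in\cS$: here I would invoke Definition~\ref{def:Stab} directly. Note first that $(A_n - \lambda I_n) \in \cS$ since $(P_n(\lambda I)P_n) = (\lambda P_n) \in \cS$ and $\cS$ is an algebra (closed under the operations of $\cF$); more precisely $\lambda I_n$ is identified with $P_n(\lambda I)P_n$. The three generating sequences in Definition~\ref{def:Stab} for $(A_n - \lambda I_n)$ are $A_n - \lambda I_n$, $S_{-n}(A_n - \lambda I_n)S_n = S_{-n}A_nS_n - \lambda I$ (using that $S_n$ is an isometry with $S_{-n}I_n S_n$ acting as the identity on the appropriate subspace), and similarly $S_nA_nS_{-n} - \lambda I$. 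Since pointwise limits of subsequences commute with the additive constant $-\lambda I$, the set of all such limits for $(A_n - \lambda I_n)$ is exactly $\Stab(A_n) - \lambda I$.

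The main obstacle, and the only genuinely delicate point, is the bookkeeping around the identification of $I_n$ (the identity on $\ell^p(-n..n)$, or equivalently $P_n$ viewed on that subspace) with the identity $I$ on the various limit spaces $\ell^p(\Z)$, $\ell^p(0..)$, $\ell^p(..0)$, and in case (b) on $\ell^p(\Z^2)$. One must check that, in each of the directional cases (a), (c), (d) of Proposition~\ref{prop:a-f1} (and their analogues in Proposition~\ref{prop:a-f2}), the perturbation $-\lambda P_n$ converges pointwise to exactly $-\lambda$ times the identity on the relevant limit space — e.g. $S_{-i_k}(-\lambda P_{j_k})S_{i_k} \to -\lambda I$, $-\lambda P_-$, or $-\lambda P_+$ respectively, matching the projections $P_-$, $P_+$ already appearing in the layer formulas. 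This is the same computation as the one carried out for $P_{j_k}$ in the proof of Proposition~\ref{prop:a-f1}, so it requires no new idea; it is purely a matter of stating that $\lambda I$ rides along with every $P_n$-sandwich. Once this is granted, the proof is a short chain of elementary identities, and I would present it as such.
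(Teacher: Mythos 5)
Your proposal is correct and follows essentially the same route as the paper: for case a) the paper likewise reads off Definition~\ref{def:Stab} and uses $I_n\to I$, $S_{-n}I_nS_n\to P_-$, $S_nI_nS_{-n}\to P_+$, and for case b) it writes $\oplus(A_n-\lambda I_n)=\oplus A_n-\lambda\oplus I_n$ and identifies the layers of $\Lim(\oplus I_n)$ as $I$, $P_{k-n..}$ or $P_{..k+n}$ — i.e.\ exactly the ``$\lambda I$ rides along with every $P_n$-sandwich'' bookkeeping you single out, which is indeed the only real content and is resolved by the identification in Remark~\ref{rem:whichI}. The only cosmetic difference is that in b) the paper keeps the zero-extension $\oplus I_n=\oplus P_n$ and computes its directional limits, whereas you absorb it into $I_Y$ via the constant-extension convention; both amount to the same computation.
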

\begin{remark} \label{rem:whichI}
Recall that $B\in\cB(A_n)$ can be an operator on $\ell^p(\Z)$ (bi-infinite matrix) or $\ell^p(k..)$ or $\ell^p(..k)$ with $k\in\Z$ (different semi-infinite matrices). When writing $B-\lambda I$ then $I$ shall denote the identity on the corresponding space.
\end{remark}
\begin{proof}[\bf Proof of Proposition \ref{prop:Stab-lambdaI}]
{\bf a) } By Definition \ref{def:Stab}, $\Stab(A_n-\lambda I_n)$ consists of the pointwise limits of subsequences of
\[
A_n-\lambda I_n,\quad S_{-n}(A_n-\lambda I_n)S_n
\qquad\text{and}\qquad S_n(A_n-\lambda I_n)S_{-n}
\]
as $n\to\infty$.
The conclusion follows from $I_n\to I$, $S_{-n}I_nS_n\to P_-$ and $S_nI_nS_{-n}\to P_+$.

{\bf b) } 
Every $\oplus B_n\in\Lim(\oplus(A_n-\lambda I_n))=\Lim(\oplus A_n-\lambda\oplus I_n)$ is of the form $\oplus C_n-\lambda\oplus D_n$, where $\oplus C_n\in\Lim(\oplus A_n)$ and $D_n=I$ or $P_{k-n..}$ or $P_{..k+n}$ with $k\in\Z$. After decomposition in layers, $B_n=C_n-\lambda D_n$, the claim follows.
\end{proof}

\subsection{Now: composed finite sections, $(A_n)\in\cS$}
Let us check that still ${0\choose 1}$, ${1\choose 1}$ and ${-1\choose 1}$ are the dominant directions in $\oplus A_n$ when $(A_n)\in\cS$, say \eqref{eq:Analg} holds. Shifting $\oplus A_n$ along the integer sequence $h=(h_l)=(\,{\alpha_l\choose \beta_l}\,)$ in $\Z^2$, leads to
\begin{eqnarray} \nonumber
(\oplus_n A_n)_h &\ot&
S_{-{\alpha_l\choose \beta_l}}(\oplus_n A_n)S_{{\alpha_l\choose\beta_l}} \ =\ \oplus_n\, S_{-\alpha_l} A_{n+\beta_l} S_{\alpha_l}\\ \nonumber
 &=& \oplus_n\, S_{-\alpha_l} \Big(\lim_i\sum_j\prod_k (P_{n+\beta_l}A^{(i,j,k)}P_{n+\beta_l})\Big) S_{\alpha_l}\\ \label{eq:oplusBn2}
&=&\oplus_n\,\lim_i\sum_j\prod_k (S_{-\alpha_l} P_{n+\beta_l} S_{\alpha_l}) (S_{-\alpha_l} A^{(i,j,k)} S_{\alpha_l}) (S_{-\alpha_l} P_{n+\beta_l} S_{\alpha_l})\ \to\ \oplus_n\, B_n\qquad
\end{eqnarray}
as $l\to\infty$ if the limits exist.
With very much analogy to Proposition \ref{prop:a-f1}, we have:
\begin{proposition} \label{prop:a-f2}
For $(A_n)\in\cS$,
let $\oplus B_n$ be a limit operator of $\oplus A_n$ w.r.t.~a sequence $h$ in $\Z^2$.
\begin{abc}
\item
Depending on the direction of $h$ in $\Z^2$, the operators $B_n$ are of the form
\begin{itemize}
\itemsep0mm
\item[(a)]\ 
\begin{tikzpicture}[scale=0.3]
\fill[gray!20] (0,0) -- (1,1) -- (-1,1);
\draw[-latex,line width=0.15mm] (0,0) -- (0,1);
\end{tikzpicture}
\quad the pointwise limit of $A_n$, that is $\lim_{i\to\infty}\sum_{j\in J_i}\prod_{k\in K_i} A^{(i,j,k)}=: A$,

\item[(b)]\ 
\begin{tikzpicture}[scale=0.3]
\fill[gray!20] (0,0) -- (1,1) -- (-1,1);
\draw[-latex,line width=0.15mm] (0,0) -- (0.4,1);
\end{tikzpicture}
\quad a limit operator of $A$,

\item[(c)]\ 
\begin{tikzpicture}[scale=0.3]
\fill[gray!20] (0,0) -- (1,1) -- (-1,1);
\draw[-latex,line width=0.15mm] (0,0) -- (1,1);
\end{tikzpicture}
\quad $\lim_{i\to\infty}\sum_{j\in J_i}\prod_{k\in K_i} (P_-A_g^{(i,j,k)}P_-)$ with $A_g^{(i,j,k)}\in\Lim_+(A^{(i,j,k)})$,

\item[(d)]\ 
\begin{tikzpicture}[scale=0.3]
\fill[gray!20] (0,0) -- (1,1) -- (-1,1);
\draw[-latex,line width=0.15mm] (0,0) -- (-1,1);
\end{tikzpicture}
\quad $\lim_{i\to\infty}\sum_{j\in J_i}\prod_{k\in K_i} (P_+A_g^{(i,j,k)}P_+)$ with $A_g^{(i,j,k)}\in\Lim_-(A^{(i,j,k)})$m

\item[(e)]\ 
\begin{tikzpicture}[scale=0.3]
\fill[gray!20] (0,0) -- (1,1) -- (-1,1);
\draw[-latex,line width=0.15mm] (0,0) -- (1,0.4);
\end{tikzpicture}
\quad $0$\quad or

\item[(f)]\ translates, $B_n=S_{-c} B'_n S_c$, of any of the operators $B'_n$ in (a)--(e).
\end{itemize}

\item
Case (b) is dominated by (a) and case (f) is dominated by the previous cases. 
\end{abc}
\end{proposition}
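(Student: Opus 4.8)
The plan is to follow the proof of Proposition~\ref{prop:a-f1} almost line by line, the one genuinely new point being that one must commute, in the representation \eqref{eq:Analg} of $A_n$, the (uniform) limit $\lim_i$, the finite sums $\sum_{j\in J_i}$ and the finite products $\prod_{k\in K_i}$ with the pointwise passage to a limit operator. As there, I would first reduce to the $0$th layer: every layer $B_m$ of $\oplus B_n$ is the $0$th layer of a limit operator of $\oplus A_n$ (namely with respect to $h$ shifted by $m$ in the $n$-direction), so it suffices to identify $B_0$. Writing the shift sequence as $h=(h_l)$ with components $\alpha_l$ (in the $m$-direction) and $\beta_l$ (in the $n$-direction), \eqref{eq:oplusBn2} says that $B_0$ is the strong limit of $g_l:=S_{-\alpha_l}A_{\beta_l}S_{\alpha_l}$. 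Abbreviating $A_n^{(i)}:=\sum_{j\in J_i}\prod_{k\in K_i}(P_nA^{(i,j,k)}P_n)$, $R_l:=S_{-\alpha_l}P_{\beta_l}S_{\alpha_l}=P_{-\beta_l-\alpha_l..\beta_l-\alpha_l}$ and $T_l^{(i,j,k)}:=S_{-\alpha_l}A^{(i,j,k)}S_{\alpha_l}$, I would record that
\[
g_l\ =\ \lim_{i\to\infty}g_l^{(i)},\qquad g_l^{(i)}\ :=\ S_{-\alpha_l}A_{\beta_l}^{(i)}S_{\alpha_l}\ =\ \sum_{j\in J_i}\prod_{k\in K_i}R_l\,T_l^{(i,j,k)}\,R_l,
\]
and that, since the $S_{\alpha_l}$ are isometric isomorphisms, this convergence is \emph{uniform in $l$}: $\|g_l-g_l^{(i)}\|=\|A_{\beta_l}-A_{\beta_l}^{(i)}\|\le\sup_n\|A_n-A_n^{(i)}\|\to0$, and likewise $\|g_l^{(i)}-g_l^{(i')}\|\le\sup_n\|A_n^{(i)}-A_n^{(i')}\|$, bounds independent of $l$.

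Second, the case distinction by the direction of $h$ is carried out exactly as in the proof of Proposition~\ref{prop:a-f1}. After passing to a subsequence of $h$ (which changes neither $\oplus B_n$ nor, by a diagonal argument over the countably many triples $(i,j,k)$, the existence of all the relevant limits), the computations (a)--(e) of that proof give, verbatim, $R_l\to I$ strongly in cases (a) and (b), $R_l\to P_-$ in (c), $R_l\to P_+$ in (d), $R_l\to0$ in (e); and whenever $|\alpha_l|\to\infty$, the operators $T_l^{(i,j,k)}$ converge strongly (together with their adjoints, in the sense needed for $p\neq2$) to $A^{(i,j,k)}$ itself (case (a), where $\alpha_l\equiv0$), to a limit operator $A_\alpha^{(i,j,k)}$ of $A^{(i,j,k)}$ (case (b)), or to some $A_g^{(i,j,k)}\in\Lim_+(A^{(i,j,k)})$ (case (c)) respectively $A_g^{(i,j,k)}\in\Lim_-(A^{(i,j,k)})$ (case (d)). Since finite sums and products are strongly continuous on norm-bounded sets, $g_l^{(i)}$ converges strongly, as $l\to\infty$, to the corresponding sum-product $G^{(i)}$ of those limits, i.e.\ to a $\sum_j\prod_k$ of $A^{(i,j,k)}$'s, of limit operators, of $P_-(\cdot)P_-$'s, or of $P_+(\cdot)P_+$'s. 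A standard $3\eps$ argument exploiting the $l$-uniformity of $\lim_i$ then yields $G^{(i)}\to B_0$ strongly as $i\to\infty$; and since $\|G^{(i)}-G^{(i')}\|\le\liminf_l\|g_l^{(i)}-g_l^{(i')}\|\le\sup_n\|A_n^{(i)}-A_n^{(i')}\|$ is a null sequence in $(i,i')$, the sequence $(G^{(i)})_i$ is norm-Cauchy, so $B_0=\lim_i G^{(i)}$ \emph{in operator norm} --- precisely the forms claimed in (a)--(e). In case (b) I would instead, more cheaply, use that $\,\cdot\mapsto\cdot_\alpha\,$ is a contractive algebra homomorphism on $\BDO$ (it preserves operations and norm limits, as recalled in Section~\ref{sec:tools}), which gives directly $B_0=\lim_i\sum_j\prod_k A_\alpha^{(i,j,k)}=\big(\lim_i\sum_j\prod_k A^{(i,j,k)}\big)_\alpha=A_\alpha\in\Lim(A)$; and in case (e) every product in $G^{(i)}$ carries a factor $R_l\to0$, hence $G^{(i)}=0$ and $B_0=0$. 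Case (f) is copied from Proposition~\ref{prop:a-f1}(f): pass to a subsequence on which the bounded sequence by which $h$ differs from one of (a)--(e) is constant $\equiv c$ (Bolzano-Weierstra\ss\ in $\Z^2$), which leaves the limit operator unchanged, so $B_0=S_{-c}B_0'S_c$ with $B_0'$ from one of (a)--(e). Part~b) is then literally Proposition~\ref{prop:a-f1}(b): $\|A_g\|\le\|A\|$ and $(A_g)^{-1}=(A^{-1})_g$ for limit operators, and $\|S_{-c}BS_c\|=\|B\|$, $\|(S_{-c}BS_c)^{-1}\|=\|B^{-1}\|$.

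The step I expect to be the main obstacle is exactly this commuting of the three limits --- in particular, showing that in cases (c) and (d) the operator $B_0$ is a genuine \emph{norm} limit of operators $G^{(i)}$ of the stated compressed form, for which no closed formula for $\lim_i G^{(i)}$ is available a priori. What makes it go through is the uniformity in $n$ --- hence, after conjugation by the isometries $S_{\alpha_l}$, in $l$ --- of the convergence in the definition \eqref{eq:Analg} of $\cS$, combined with lower semicontinuity of the operator norm under strong convergence. Everything else, namely the direction analysis and part~b), is a verbatim transcription of the corresponding parts of Proposition~\ref{prop:a-f1}.
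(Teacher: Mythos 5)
Your proof is correct and follows essentially the same route as the paper's: the same reduction to the $0$th layer, the same case distinction (a)--(f) driven by the direction of $h$ via the formula \eqref{eq:oplusBn2}, and the same treatment of (e), (f) and part b) by reference to Proposition~\ref{prop:a-f1}. The only difference is that you make explicit the interchange of $\lim_i$, $\sum_j$, $\prod_k$ with the pointwise passage to the limit operator (via uniformity in $l$ after conjugation by the isometries $S_{\alpha_l}$, strong continuity of finite sum-products on bounded sets, and lower semicontinuity of the norm), a step the paper's proof leaves implicit in the phrase ``by \eqref{eq:B02}''.
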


\noindent
Again, Remark \ref{rem:a-f} applies.

\begin{Proof}
\quad {\bf a) } Let the sequence $h=(h_l)=(\,{\alpha_l\choose \beta_l}\,)$ in $\Z^2$ be such that $|h_l|\to\infty$ and that the limit operator $(\oplus A_n)_h=:\oplus B_n$ exists.
As in the proof of Proposition \ref{prop:a-f1}, w.l.o.g., let $n=0$. By \eqref{eq:oplusBn2},
\begin{equation} \label{eq:B02}
B_0\ \ot\  \lim_{i\to\infty}\sum_{j\in J_i}\prod_{k\in K_i} (S_{-\alpha_l} P_{\beta_l} S_{\alpha_l}) (S_{-\alpha_l} A^{(i,j,k)} S_{\alpha_l}) (S_{-\alpha_l} P_{\beta_l} S_{\alpha_l})
\quad\text{as}\quad l\to\infty.
\end{equation}
\begin{itemize}
\itemsep0mm
\item[(a)]\ If $h$ is a subsequence of $\N\cdot{0\choose 1}$, i.e.~$h_l={0\choose \beta_l}$ with $\beta_l\to+\infty$ then $S_{-\alpha_l}=I=S_{\alpha_l}$, so that, by \eqref{eq:B02},
$B_0\ot \lim_i\sum_j\prod_k P_{\beta_l}A^{(i,j,k)}P_{\beta_l} \to A$ as $l\to\infty$, whence $B_0=A$.

\item[(b)]\ If $|\alpha_l|\to\infty$ but $|\alpha_l|-\beta_l\to-\infty$, so that
$\alpha_l-\beta_l\to-\infty$ and $-\alpha_l-\beta_l\to-\infty$ then
\[
S_{-\alpha_l} P_{\beta_l} S_{\alpha_l}\ =\ P_{-\beta_l-\alpha_l..\beta_l-\alpha_l}\ \to\ I
\quad\text{as}\quad l\to\infty,
\]
so that, by \eqref{eq:B02} and by the standard properties of limit operators, e.g.~\cite[Prop.\,3.4]{Li:Book},
\[
B_0\ =\ \lim_{i\to\infty}\sum_{j\in J_i}\prod_{k\in K_i} A_\alpha^{(i,j,k)}
\ =\ \Big(\lim_{i\to\infty}\sum_{j\in J_i}\prod_{k\in K_i} A^{(i,j,k)}\Big)_\alpha\ =\ A_\alpha,
\]
meaning the limit operator of $A$ with respect to the sequence $\alpha=(\alpha_l)_l$ in $\Z$. 

\item[(c)]\ If $h$ is a subsequence of $\N\cdot{1\choose 1}$, i.e.~$h_l={\beta_l\choose \beta_l}$ with $\beta_l\to+\infty$ then
\[
S_{-\alpha_l} P_{\beta_l} S_{\alpha_l}\ =\ P_{-2\beta_l..0}\ \to\ P_{..0}\ =\ P_-
\quad\text{as}\quad l\to\infty,
\]
so that, by \eqref{eq:B02},
\[
B_0\ =\ \lim_{i\to\infty}\sum_{j\in J_i}\prod_{k\in K_i}(P_- A_\beta^{(i,j,k)}P_-)
\]
with $\beta=(\beta_l)_l\to+\infty$.

\item[(d)]\ This is again in analogy to (c).

\item[(e, f)]\ Both are as in Proposition \ref{prop:a-f1}.
\end{itemize}

\noindent
{\bf b) } See the proof of Proposition \ref{prop:a-f1}. \qedhere
\end{Proof}
... again leaving us with (a), (c) and (d) and confirming our Definition \ref{def:Stab} of $\Stab(A_n)$.
\begin{proposition} \label{prop:Stabalg}
If $(A_n)$ is in the finite section algebra $\cS$, say \eqref{eq:(An)alg} holds, we have
\begin{eqnarray*}
\Stab(A_n) \ =\ \Big\{\ A,&& \lim_{i\to\infty}\sum_{j\in J_i}\prod_{k\in K_i} (P_-A_f^{(i,j,k)}P_-),\quad \lim_{i\to\infty}\sum_{j\in J_i}\prod_{k\in K_i} (P_+A_g^{(i,j,k)}P_+)\\
&& \qquad:\quad A_f^{(i,j,k)}\in\Lim_+(A^{(i,j,k)}), \quad A_g^{(i,j,k)}\in\Lim_-(A^{(i,j,k)})\ \Big\}
\end{eqnarray*}
with $A:=\lim_{i\to\infty}\sum_{j\in J_i}\prod_{k\in K_i} A^{(i,j,k)}$ denoting the pointwise limit of $A_n$.
\end{proposition}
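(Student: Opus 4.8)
The plan is to read off $\Stab(A_n)$ straight from Definition \ref{def:Stab} and to match its three constituent pieces with the cases (a), (c), (d) of Proposition \ref{prop:a-f2}. Indeed, $\Stab(A_n)$ is by definition the union of the sets of pointwise limits of subsequences of $A_n$, of $S_{-n}A_nS_n$ and of $S_nA_nS_{-n}$; and, via \eqref{eq:B0gen}, a subsequence $(S_{-n_l}A_{n_l}S_{n_l})_l$ is exactly the $0$-th layer of the translate $S_{-h_l}(\oplus A_n)S_{h_l}$ for $h_l={n_l\choose n_l}$ (direction ${1\choose1}$), while $(S_{n_l}A_{n_l}S_{-n_l})_l$ is the $0$-th layer for $h_l={-n_l\choose n_l}$ (direction ${-1\choose1}$). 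The sequence $A_n$ itself converges pointwise to $A$, accounting for the first listed element.

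First I would carry out, for these two diagonal directions, the computation already done in the proof of Proposition \ref{prop:a-f2}(c),(d): insert the representation \eqref{eq:Analg}, push each shift through the finite sums $\sum_{j\in J_i}$ and products $\prod_{k\in K_i}$ and, crucially, through $\lim_{i\to\infty}$ -- legitimate because that limit is uniform in $n$. The conjugated cut-offs satisfy $S_{-n_l}P_{n_l}S_{n_l}=P_{-2n_l..0}\to P_-$ and $S_{n_l}P_{n_l}S_{-n_l}=P_{0..2n_l}\to P_+$ strongly; and, after passing to a common subsequence of $(n_l)$ along which all the (countably many) conjugates $S_{\mp n_l}A^{(i,j,k)}S_{\pm n_l}$ converge -- a diagonal extraction, available in the scalar case by compactness of the limit-operator construction and in general under the richness hypothesis of Section \ref{sec:Banach} -- the middle factors tend to limit operators $A_f^{(i,j,k)}\in\Lim_+(A^{(i,j,k)})$, resp.\ $A_g^{(i,j,k)}\in\Lim_-(A^{(i,j,k)})$, sharing one index sequence $f$, resp.\ $g$. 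This shows that every element of $\Stab(A_n)$ is one of $A$, $\lim_i\sum_j\prod_k(P_-A_f^{(i,j,k)}P_-)$ or $\lim_i\sum_j\prod_k(P_+A_g^{(i,j,k)}P_+)$.

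For the reverse inclusion I would run the same chain of equalities backwards: given any sequence $f=(f_l)$ in $\N$ with $f_l\to\infty$ such that $A_f^{(i,j,k)}:=\lim_l S_{-f_l}A^{(i,j,k)}S_{f_l}$ exists for all $i,j,k$ (which is precisely what the shared subscript in ``$A_f^{(i,j,k)}\in\Lim_+(A^{(i,j,k)})$'' encodes), the same interchange of limits shows that $\lim_i\sum_j\prod_k(P_-A_f^{(i,j,k)}P_-)$ is the pointwise limit of the subsequence $(S_{-f_l}A_{f_l}S_{f_l})_l$ of $S_{-n}A_nS_n$, hence lies in $\Stab(A_n)$; symmetrically for the $P_+$-operators and $S_nA_nS_{-n}$. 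Assembling the three pieces gives the claimed identity; Proposition \ref{prop:Stabpure} is then the special case $J_i=K_i=\{\ast\}$, $A^{(i,j,k)}\equiv A$.

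The part I expect to cost the most care is the double-limit interchange between the algebraic limit $\lim_i$ of \eqref{eq:Analg} and the subsequential shift limit $l\to\infty$ (a Moore--Osgood-type argument, using in addition that a product of uniformly bounded, strongly convergent operator sequences converges strongly), together with the bookkeeping of the diagonal extraction that forces all the operators $A^{(i,j,k)}$ to converge along one and the same subsequence. Everything else is essentially a matter of matching notation between Definition \ref{def:Stab} and Proposition \ref{prop:a-f2}.
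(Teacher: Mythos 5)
Your proposal is correct and follows essentially the same route as the paper, which states Proposition \ref{prop:Stabalg} as an immediate consequence of Definition \ref{def:Stab} and the computation in cases (a), (c), (d) of Proposition \ref{prop:a-f2}; you merely make explicit the Moore--Osgood interchange of $\lim_i$ with the shift limit and the diagonal extraction to a common index sequence $f$, both of which the paper leaves implicit in \eqref{eq:oplusBn2}--\eqref{eq:B02}. Your reading of the shared subscript $f$ as a single common sequence along which all $A_f^{(i,j,k)}$ converge is the intended one (compare Proposition \ref{prop:Stab_h} and Corollary \ref{cor:minirich}) and is exactly what makes the reverse inclusion go through.
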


\section{Subsequence versions of our $\limsup$ formulas} \label{sec:subseq}
As a preparation for Section \ref{sec:limsup=lim}, we prove subsequence versions of Propositions \ref{prop:limsupAn} and \ref{prop:limsupAn-1}, implying subsequence versions of Propositions \ref{prop:limsupSpeps} and \ref{prop:limsupkappa}. 
The goal is to show, for $(A_n)\in\BDS$,
\begin{eqnarray} 
\label{eq:limsupAhn}
\limsup\|A_{h_n}\| &=& \|\cB_h(A_n)\|_\infty\,,\\
\label{eq:limsupAhn-1}
\limsup\|A_{h_n}^{-1}\| &=& \|\big(\cB_h(A_n)\big)^{-1}\|_\infty\,,\\
\label{eq:limsupkappaAhn}
\sup_{B\in\cB_h(A_n)}\kappa(B)\ \le\ 
\limsup\kappa(A_{h_n}) &\le& \|\cB_h(A_n)\|_\infty\cdot\|\big(\cB_h(A_n)\big)^{-1}\|_\infty\,,\\[-3mm]
\label{eq:limsupSpepsAhn}
\limsup\Speps A_{h_n} &=& \bigcup_{B\in\cB_h(A_n)}\Speps B
\end{eqnarray}
for arbitrary monotonic sequences $h=(h_n)$ in $\N$. 

For $(A_n)\in\BDS$ and a monotonic sequence $h=(h_n)$ in $\N$, put
\begin{equation} \label{eq:B_hAn}
\cB_h(A_n)\ :=\ \left\{
\begin{array}{cp{5mm}l}
\Stab_h(A_n)&&\text{if } (A_n)\in\cS,\\
\Lay(\Lim_{{*\choose h}}(\oplus A_n))&&\text{otherwise,}
\end{array}
\right.
\end{equation}
where $\Stab_h(A_n)$ is given by Definition \ref{def:Stab_h} below and $\Lim_{{*\choose h}}(\oplus A_n)$ is the set of all limit operators $(\oplus A_n)_g$ with sequences $g=({i_k\choose j_k})_k$ in $\Z^2$ with $i_k\in\Z$ and $j=(j_k)$ a subsequence of $h$.
\begin{definition} \label{def:Stab_h}
For a sequence $(A_n)\in\cS$ and a monotonic sequence $h=(h_n)$ in $\N$, let $\Stab_h(A_n)$ denote the set consisting of the pointwise limits of all subsequences of
\[
A_{h_n},\qquad S_{-h_n}A_{h_n}S_{h_n}\qquad\text{and}\qquad S_{h_n}A_{h_n}S_{-h_n}.
\]
\end{definition}
\begin{proposition} \label{prop:Stab_h}
If $h=(h_n)$ is a monotonic sequence in $\N$ and $(A_n)\in\cS$, say  \eqref{eq:(An)alg} holds, then
\begin{eqnarray*}
\Stab_h(A_n) \ =\ \Big\{\ A,&& \lim_{i\to\infty}\sum_{j\in J_i}\prod_{k\in K_i} (P_-A_f^{(i,j,k)}P_-),\qquad \lim_{i\to\infty}\sum_{j\in J_i}\prod_{k\in K_i} (P_+A_g^{(i,j,k)}P_+)\\
&& \qquad:\quad A_f^{(i,j,k)}\in\Lim_h(A^{(i,j,k)}), \quad A_g^{(i,j,k)}\in\Lim_{-h}(A^{(i,j,k)})\ \Big\}\,.
\end{eqnarray*}
For $A_n=P_nAP_n$, this simplifies to $\{A,\ P_-A_fP_-,\ P_+A_gP_+\ :\ A_f\in\Lim_h(A),\ A_g\in\Lim_{-h}(A)\}$.
\end{proposition}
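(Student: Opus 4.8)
The plan is to argue exactly as in the proof of Proposition~\ref{prop:a-f2}, only with the $n$-axis component of the shift frozen to range over subsequences of $h$, and without the detour through $\oplus A_n$: by Definition~\ref{def:Stab_h}, $\Stab_h(A_n)$ is generated by the pointwise limits of subsequences of the three sequences $A_{h_n}$, $S_{-h_n}A_{h_n}S_{h_n}$ and $S_{h_n}A_{h_n}S_{-h_n}$, so it suffices to identify the limits of each. The first sequence is harmless: every $(A_n)\in\cS$ converges pointwise to $A=\lim_i\sum_j\prod_k A^{(i,j,k)}$ (see the text following \eqref{eq:Analg}), hence so does every subsequence, contributing the element $A$.

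For the second sequence I would expand $A_{h_n}$ by \eqref{eq:Analg}, with the inner limit $\lim_i$ \emph{uniform in $n$}, and conjugate by the isometries $S_{\pm h_n}$; since conjugation commutes with the finite sums and products and, by uniformity and isometry, with $\lim_i$, this gives
\[
S_{-h_n}A_{h_n}S_{h_n}=\lim_{i\to\infty}\sum_{j\in J_i}\prod_{k\in K_i}\big(S_{-h_n}P_{h_n}S_{h_n}\big)\big(S_{-h_n}A^{(i,j,k)}S_{h_n}\big)\big(S_{-h_n}P_{h_n}S_{h_n}\big),
\]
still uniformly in $n$. Since $h_n\to\infty$, we have $S_{-h_n}P_{h_n}S_{h_n}=P_{-2h_n..0}\to P_-$ strongly, and a diagonal Bolzano--Weierstra\ss\ extraction over the countably many $A^{(i,j,k)}$ (exactly as in the construction of $\Lim$) produces a subsequence $\beta$ of $h$ along which $S_{-\beta_n}A^{(i,j,k)}S_{\beta_n}$ converges pointwise to some $A_\beta^{(i,j,k)}\in\Lim_h(A^{(i,j,k)})$ for \emph{all} triples $(i,j,k)$ simultaneously. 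An $\eps/3$-argument, splitting off the tail of the uniform $\lim_i$, then yields that along $\beta$ the sequence $S_{-h_n}A_{h_n}S_{h_n}$ converges pointwise to $\lim_i\sum_j\prod_k(P_-A_\beta^{(i,j,k)}P_-)$, understood as an operator on $\ell^p(..0)$ in the sense of Remark~\ref{rem:whichI}. This is the inclusion ``$\subseteq$'' for the second family; ``$\supseteq$'' is the same computation read backwards, starting from a subsequence $\beta$ of $h$ (refined so that all $A_\beta^{(i,j,k)}$ exist). The third sequence is symmetric: $S_{h_n}P_{h_n}S_{-h_n}=P_{0..2h_n}\to P_+$, and the limit operators range over $\Lim_{-h}$.

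The cases (b), (e), (f) of Proposition~\ref{prop:a-f2} do not reappear here, because Definition~\ref{def:Stab_h} probes only the three directions ${0\choose 1}$, ${1\choose 1}$, ${-1\choose 1}$ along $h$; this is exactly why the compact three-family description used for $\Stab(A_n)$ in Proposition~\ref{prop:Stabalg} carries over verbatim. For pure finite sections $A_n=P_nAP_n$ the representation has a single term with $A^{(i,j,k)}=A$, so the formula collapses to $\{A,\ P_-A_fP_-,\ P_+A_gP_+:A_f\in\Lim_h(A),\ A_g\in\Lim_{-h}(A)\}$ as in Proposition~\ref{prop:Stabpure}. The main obstacle is the limit interchange: showing that the algebraic limit $\lim_i$ defining membership in $\cS$ commutes with the pointwise operator limit along the subsequence $\beta$, which relies on the uniformity of $\lim_i$ in $n$ from \eqref{eq:Analg} together with the diagonal extraction of one $\beta$ serving all countably many factors at once; a secondary bookkeeping issue is tracking the domains ($\ell^p(\Z)$ vs.\ $\ell^p(..0)$ vs.\ $\ell^p(0..)$) of the limit operators, as flagged in Remark~\ref{rem:whichI}.
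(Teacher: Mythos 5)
Your argument is correct and is essentially the computation the paper itself relies on: the paper states Proposition~\ref{prop:Stab_h} without a separate proof, as a direct consequence of Definition~\ref{def:Stab_h} together with the cases (a), (c), (d) of the computation \eqref{eq:oplusBn2} in Proposition~\ref{prop:a-f2}, which is exactly what you reproduce (with the $n$-component of the shift running along $h$, $S_{\mp h_n}P_{h_n}S_{\pm h_n}\to P_\mp$, and a diagonal extraction plus the uniformity of $\lim_i$ to justify the limit interchange). Your reading of the right-hand side as ranging over limit operators taken along a \emph{common} subsequence $f\subseteq h$ is the intended one and matches Proposition~\ref{prop:Stabalg}.
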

For the study of the stability of subsequences $(A_{h_n})$ in \cite{Li:FSMsubs}, the ``unwanted'' layers of $\oplus A_n$ have been removed, destroying its triangular pattern. Because this pattern and its dominant directions play an important role in our paper, we instead replace the unwanted layers, $A_m$ with $m\not\in\im h$, by $cP_m$ with $c\in\R$ chosen to make these layers irrelevant for the current purpose.

\noindent
\begin{minipage}{83mm}
So fix $c\in\R$ and put
\[
A_{n,c}'\ :=\ \left\{
\begin{array}{cl}
A_n,&n\in\im h,\\
cP_n,& n\not\in\im h.
\end{array}
\right.
\]
Then
\begin{equation} \label{eq:StabAn'}
\cB(A_{n,c}')\ =\ \cB_h(A_n)\cup\{cI\}
\end{equation}
with identity $I$ as in Remark \ref{rem:whichI} and both
\begin{eqnarray} \label{eq:limsupAn'}
\limsup\|A_{n,0}'\|&=&\limsup\|A_{h_n}\|\\
\label{eq:limsupAn'-1}
\limsup\|A_{n,c}'^{-1}\|&=&\limsup\|A_{h_n}^{-1}\|
\end{eqnarray}
\end{minipage}
\begin{minipage}{82mm}
\begin{center}
\begin{tikzpicture}[scale=0.24]
\foreach \k in {2,3,5,7,11}
  \fill[gray!50] (-\k,\k) rectangle (\k+1,\k+1);
\draw[gray!30,very thin,step=1cm] (-14,0) grid (15,15);
\foreach \k in {1,2,...,12} {
   \draw [black,line width=0.5mm] (-\k,\k) rectangle (\k+1,\k+1); 
}   
\foreach \j in {1,2,3} {
  \filldraw (0.5,{13.0+0.5*\j}) circle (1.5pt);
  \filldraw ({13.0+0.5*\j},{13.0+0.5*\j}) circle (1.5pt);  
  \filldraw ({-12-0.5*\j},{13.0+0.5*\j}) circle (1.5pt);  
}
\node [text=gray] at (-11.4,1.5) { \Large $\bf \Z^2$};
\end{tikzpicture}\\[2mm]
{\footnotesize Pattern of $\oplus A_{n,c}'$ with $h=(2,3,5,7,11,\dots)$.\\
Grey layers are original layers $A_n$, unchanged,\\[-1.2mm]
and white layers are $cP_n$ with a fixed $c\in\R$.
}
\end{center}
\end{minipage}
~\\
holding for large enough $c$, say $c>\sup\|A_n\|$, since then
\begin{equation} \label{eq:clarge}
c^{-1}\ <\ (\sup\|A_n\|)^{-1}\ =\ \inf\|A_n\|^{-1}\ \le\ \inf\|A_n^{-1}\| \le\ \inf\|A_{h_n}^{-1}\|\ \le\ \limsup\|A_{h_n}^{-1}\|.
\end{equation}
\begin{proof}[\bf Proof of \eqref{eq:limsupAhn}, \eqref{eq:limsupAhn-1}, \eqref{eq:limsupkappaAhn} and \eqref{eq:limsupSpepsAhn}]
It is enough to show \eqref{eq:limsupAhn} and \eqref{eq:limsupAhn-1}. The rest follows like Propositions \ref{prop:limsupSpeps} and \ref{prop:limsupkappa} follow from \ref{prop:limsupAn} and \ref{prop:limsupAn-1}. For $c>\sup\|A_n\|$,
\begin{eqnarray*}
\limsup\|A_{h_n}\| &\stackrel{\eqref{eq:limsupAn'}}=& \limsup\|A_{n,0}'\|\ \stackrel{\ci7}=\ \|\Lim(\oplus A_{n,0}')\|_\infty\ \stackrel{\ci8}=\ \|\cB_h(A_n)\|_\infty\\
\text{and}\quad
\limsup\|A_{h_n}^{-1}\| &\stackrel{\eqref{eq:limsupAn'-1}}=& \limsup\|A_{n,c}'^{-1}\|\ \stackrel{\ci7}=\ \|\big(\Lim(\oplus A_{n,c}')\big)^{-1}\|_\infty\ \stackrel{\ci9}=\ \|\big(\cB_h(A_n)\big)^{-1}\|_\infty\,.
\end{eqnarray*}
\begin{itemize}
\item[\ci7] This is by steps \ci1, \ci2, \ci5, \ci3 and \ci6 in the proofs of Propositions \ref{prop:limsupAn} and \ref{prop:limsupAn-1}. Note that these steps hold for all sequences in $\BDS$.

\item[\ci8] 
Let $\oplus B_n\in\Lim(\oplus A_{n,0}')$. W.l.o.g, just consider $B_0$.
By \eqref{eq:B0gen}, $B_0\ot S_{-i_k}A_{j_k,0}'S_{i_k}$ as $k\to\infty$.
\begin{itemize}
\item[a)]
If $j_k\in\im h$ eventually then $B_0\in\Lay(\Lim_{*\choose h}(\oplus A_n))$, by \eqref{eq:B_hAn}. If $(A_n)\in\cS$, further restrict to directions ${0\choose 1}$ and $ {\pm1\choose 1}$, i.e.~to $\Stab_h(A_n)$, without missing any maximizers.

\item[b)] 
If $j_k\not\in\im h$ eventually then $B_0=0$, clearly dispensable when maximizing $\|B_n\|$.

\item[c)] 
Also if $j_k$ keeps alternating between $\im h$ and its complement and the limit $B_0$ exists then $B_0=0$, as we see by passing to the subsequence $\not\in\im h$.
\end{itemize}

\item[\ci9] 
The argument is as in \ci8, only that now case b) and c) are of the form $B_0=cI$, which does not contribute to maximizing $\|B_n^{-1}\|$, by \eqref{eq:clarge}.\ \qedhere
\end{itemize} 
\end{proof}

\section{When is $\limsup$ a limit?} \label{sec:limsup=lim}
\begin{convention} \label{conv:f}
For $\s\in\{\|\cdot\|,\|(\cdot)^{-1}\|,\Speps\}$, we interpret $\s(A_n)$ as $\|A_n\|$ or $\|A_n^{-1}\|$ or $\Speps A_n$ and, correspondingly, $\s(\cB(A_n))$ as $\|\cB(A_n)\|_\infty$ or $\|\big(\cB(A_n)\big)^{-1}\|_\infty$ or $\cup_{B\in\cB(A_n)}\,\Speps B$.
Depending on the context, we interpret the signs $\prec, \preceq$ either as $<,\le$ or $\subsetneq,\subseteq$.
\end{convention}

Then, for monotonic sequences $h=(h_n)$ in $\N$, \eqref{eq:limsupAhn}, \eqref{eq:limsupAhn-1} and \eqref{eq:limsupSpepsAhn} can be summarized as
\begin{equation} \label{eq:limsupf}
\limsup \s(A_{h_n})\ =\ \s(\cB_h(A_n)),\qquad (A_n)\in\BDS.
\end{equation}
For the full sequence, $h=(h_n)=(n)$, we recover Propositions \ref{prop:limsupAn}, \ref{prop:limsupAn-1} and \ref{prop:limsupSpeps}.

So let $(A_n)\in\BDS$ and $\s\in\{\|\cdot\|,\|(\cdot)^{-1}\|,\Speps\}$, see Convention \ref{conv:f}.
Now, when is the $\limsup$ in \eqref{eq:limsupf} a limit? The answer is simple:
If a subsequence $h=(h_n)$ of the naturals exists, where $\limsup \s(A_{h_n})$ differs from $\limsup \s(A_n)$ then $\s(A_n)$ is not convergent; otherwise it is. So
let us search for monotonic sequences $h$ in $\N$ with
\begin{equation} \label{eq:<F}
\s(\cB_h(A_n))\ \stackrel{\eqref{eq:limsupf}}=\ \limsup \s(A_{h_n})\ \prec \ \limsup \s(A_n)\ \stackrel{\eqref{eq:limsupf}}=\ \s(\cB(A_n)).
\end{equation}
The search for promising subsequences $h$ of $\N$ is not as hopeless as it first seems. 
For a monotonic sequence $g$ in $\N$, write $h\subseteq g$ if $h$ is a subsequence of $g$ and note that, by \eqref{eq:B_hAn},
\begin{equation} \label{eq:mon}
h\subseteq g
\quad\text{implies}\quad
\cB_h(A_n)\ \subseteq\ \cB_g(A_n),
\quad\text{whence also}\quad
\s(\cB_h(A_n))\ \preceq\ \s(\cB_g(A_n)).
\end{equation}
This monotonicity, \eqref{eq:mon}, is guiding our search for sequences $h$ with \eqref{eq:<F}. 
\begin{definition} \label{def:mini}
We say that a monotonic sequence $g$ in $\N$ is {\sl minimizing} for the operator sequence $(A_n)\in\BDS$ and write $g\in m(A_n)$ if $\cB_h(A_n)=\cB_g(A_n)$ for all subsequences $h\subseteq g$. 
\end{definition}
\begin{example}
For pure finite sections, $A_n=P_nAP_n$ with $A\in\BDO$, and a monotonic sequence $f$ in $\N$, by $\Lim_f(A)\ne\varnothing$, first pass to a subsequence $g\subseteq f$ such that the limit operator $A_g$ exists and then to a subsequence $h\subseteq g\subseteq f$ such that also $A_{-h}$ exists. By Proposition \ref{prop:Stab_h}, 
\[
\cB_h(A_n)\ =\ \Stab_h(A_n)\ =\ \{A,\ P_-A_hP_-,\ P_+A_{-h}P_+\}
\]
cannot get smaller by passing to subsequences of $h$.
So $h$ is a minimizing sequence for $(A_n)$. 
\end{example}

This was a good practice, here is the general statement for $(A_n)\in\cS$:
\begin{corollary}\label{cor:minirich}
For $(A_n)\in\cS$, every monotonic sequence in $\N$ has a minimizing subsequence.
\end{corollary}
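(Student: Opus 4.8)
The plan is to reduce, via Proposition~\ref{prop:Stab_h}, the existence of a minimizing subsequence to a statement about the limit operators of the countably many building blocks in a representation of $(A_n)$, and then to extract the desired subsequence by a diagonal argument. Fix a representation \eqref{eq:(An)alg} of $(A_n)\in\cS$. Since each $J_i$ and $K_i$ is finite, only countably many operators $A^{(i,j,k)}\in\BDO$ occur in it; enumerate them as $B^{(1)},B^{(2)},\dots$. Note that, by Definition~\ref{def:Stab_h}, $\Stab_h(A_n)$ itself does not depend on the chosen representation, while Proposition~\ref{prop:Stab_h} expresses it through one: for every monotonic $g$ in $\N$, the set $\Stab_g(A_n)$ is completely determined by the operator $A$ together with the families $\Lim_g(B^{(m)})$ and $\Lim_{-g}(B^{(m)})$, $m\in\N$. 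Hence it suffices to find a subsequence $h$ of the given monotonic sequence $f$ along which all of these families are as small as possible, i.e., do not shrink further under passing to subsequences.

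To this end I would build a descending chain $f\supseteq h^{(1)}\supseteq h^{(2)}\supseteq\cdots$ of monotonic sequences in $\N$: given $h^{(m-1)}$, use that $\Lim_{h^{(m-1)}}(B^{(m)})\ne\varnothing$ (repeated Bolzano-Weierstra\ss\ in scalar-valued $\ell^p(\Z)$, as recalled after the definition of limit operators) to pass to a subsequence along which $S_{-h^{(m)}_n}B^{(m)}S_{h^{(m)}_n}$ converges pointwise, and thin once more so that $S_{h^{(m)}_n}B^{(m)}S_{-h^{(m)}_n}$ converges as well. Let $h$ be a diagonal sequence, $h_n:=h^{(n)}_{k_n}$ with the $k_n$ chosen so that $(h_n)$ is strictly increasing; then $h$ is a monotonic subsequence of $f$, and for each $m$ a tail of $h$ is a subsequence of $h^{(m)}$, so that both $S_{-h_n}B^{(m)}S_{h_n}$ and $S_{h_n}B^{(m)}S_{-h_n}$ converge. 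Equivalently, $\Lim_h(B^{(m)})$ and $\Lim_{-h}(B^{(m)})$ are singletons for every $m\in\N$.

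It remains to see that this $h$ is minimizing. Let $h'\subseteq h$ be any subsequence. For each $m$ we have $\varnothing\ne\Lim_{h'}(B^{(m)})\subseteq\Lim_h(B^{(m)})$ with $\Lim_h(B^{(m)})$ a singleton, hence $\Lim_{h'}(B^{(m)})=\Lim_h(B^{(m)})$; likewise $\Lim_{-h'}(B^{(m)})=\Lim_{-h}(B^{(m)})$. Since by Proposition~\ref{prop:Stab_h} these families together with $A$ determine $\Stab_{h'}(A_n)$ and $\Stab_h(A_n)$ through the same formula, we get $\Stab_{h'}(A_n)=\Stab_h(A_n)$ (both being, in fact, sets of at most three operators). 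By \eqref{eq:B_hAn} this reads $\cB_{h'}(A_n)=\cB_h(A_n)$, so $h\in m(A_n)$. The only point that requires a little care is the diagonal extraction itself -- ensuring that $h$ is genuinely a strictly increasing subsequence of $f$ while simultaneously gaining control of both shift directions for all countably many $B^{(m)}$ -- but this is the standard diagonal trick and presents no real obstacle once the enumeration of the building blocks is fixed.
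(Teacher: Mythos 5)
Your proposal is correct and follows essentially the same route as the paper: enumerate the countably many building blocks $A^{(i,j,k)}$, extract a nested chain of subsequences along which each one has limit operators in both directions $\pm h$, diagonalize, and observe via Proposition~\ref{prop:Stab_h} that the resulting (singleton) limit-operator families, and hence $\Stab_h(A_n)$, cannot shrink under further subsequences. The only cosmetic difference is that you make the ``singletons cannot get smaller'' step and the representation-independence of $\Stab_h(A_n)$ explicit, which the paper leaves implicit.
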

\begin{Proof}
For $(A_n)\in\cS$, say \eqref{eq:(An)alg} holds, enumerate the set of all involved $A^{(i,j,k)}$ by $A^{(1)}, A^{(2)}, \dots$. For a monotonic sequence $f$ in $\N$, first pass to $g^{(1)}\subseteq f$ such that $A^{(1)}_{g^{(1)}}$ and $A^{(1)}_{-g^{(1)}}$ exist, then to a subsequence $g^{(2)}\subseteq g^{(1)}\subseteq f$ such that $A^{(2)}_{g^{(2)}}$ and $A^{(2)}_{-g^{(2)}}$ exist, and so on. Then take $h=(h_n)\subseteq f$ with $h_n=g^{(n)}_n$ for $n\in\N$ and note that all limit operators $A^{(i,j,k)}_h$ and $A^{(i,j,k)}_{-h}$ exist, whence
\begin{equation} \label{eq:Stabhmin}
\cB_h(A_n) = \Stab_h(A_n) = \Big\{A,\ \lim_{i\to\infty}\sum_{j\in J_i}\prod_{k\in K_i} (P_-A_h^{(i,j,k)}P_-),\ \lim_{i\to\infty}\sum_{j\in J_i}\prod_{k\in K_i} (P_+A_{-h}^{(i,j,k)}P_+)\Big\}
\end{equation}
cannot get any smaller via subsequences of $h$, so that $h$ is a minimizing sequence for $(A_n)$.
\end{Proof}
\begin{proposition} \label{prop:minlim}
If $g$ is a minimizing sequence for $(A_n)\in\BDS$ then $\s(A_{g_n})$ is convergent.
\end{proposition}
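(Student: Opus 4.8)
The plan is to deduce everything from the $\limsup$-formula \eqref{eq:limsupf} together with the defining property of a minimizing sequence, so that no new analysis is needed. Write $y_n:=\s(A_{g_n})$. For $\s=\|\cdot\|$ these lie in the compact interval $[0,M]$ with $M:=\sup_n\|A_n\|<\infty$; for $\s=\|(\cdot)^{-1}\|$ in the compact extended half-line $[0,\infty]$ (so "convergent" is meaningful even when $(A_n)$ is unstable); and for $\s=\Speps$ in the compact metric space of nonempty compact subsets of the ball $\{|\lambda|\le M+\eps\}$ under the Hausdorff metric, noting that each $\Speps A_{g_n}$ is a nonempty compact subset of that ball since the layer $A_{g_n}$ acts on the finite-dimensional space $\ell^p(-g_n..g_n)$. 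In each case "$\s(A_{g_n})$ is convergent" means that $(y_n)$ converges in the relevant compact space, and it is enough to exclude a second partial limit.

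First I would record the reduction. A subsequence $h=(g_{k_n})_n$ of $g$ gives exactly a subsequence $(y_{k_n})_n$ of $(y_n)$, and since $g$ is minimizing, Definition~\ref{def:mini} gives $\cB_h(A_n)=\cB_g(A_n)$, hence $\s(\cB_h(A_n))=\s(\cB_g(A_n))$. Applying \eqref{eq:limsupf} to $h$ and to $g$ then yields
\[
\limsup_n \s(A_{h_n})\ =\ \s(\cB_h(A_n))\ =\ \s(\cB_g(A_n))\ =\ \limsup_n \s(A_{g_n}),
\]
that is, every subsequence of $(y_n)$ has the same $\limsup$ as $(y_n)$ itself — the ordinary upper limit in the two numeric cases, the Kuratowski upper limit $\limsup$ from Section~\ref{sec:tools} in the pseudospectral case.

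It then remains to invoke the elementary principle that a non-convergent sequence in one of our compact spaces must admit a subsequence whose $\limsup$ is strictly smaller, which contradicts the display. In the numeric cases this is trivial: non-convergence means $\liminf_n y_n\prec\limsup_n y_n$, and a subsequence realizing the $\liminf$ has the strictly smaller $\limsup$. In the pseudospectral case I would use the Hausdorff theorem ($T_n\Hto T$ iff $\liminf T_n=\limsup T_n$) cited in Section~\ref{sec:tools}: non-convergence of $(y_n)=(\Speps A_{g_n})$ produces some $\lambda\in\limsup_n y_n\setminus\liminf_n y_n$; from $\lambda\notin\liminf_n y_n$, i.e.\ $\dist(\lambda,\Speps A_{g_n})\not\to 0$, extract a subsequence $h\subseteq g$ along which $\dist(\lambda,\Speps A_{h_n})\ge\delta>0$; then $\lambda\notin\limsup_n\Speps A_{h_n}$, while $\limsup_n\Speps A_{h_n}\subseteq\limsup_n\Speps A_{g_n}$ always holds, so the inclusion of upper limits is strict — again contradicting the display. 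Hence $(y_n)$ converges.

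The main obstacle, modest as it is, will be the bookkeeping in the pseudospectral case: one must be careful that the relevant $\limsup$ is the set-theoretic (Kuratowski) upper limit, that passing to a subsequence can only shrink it, and that a point staying at a positive distance from $\Speps A_{h_n}$ genuinely cannot be a partial limit of a sequence $\lambda_n\in\Speps A_{h_n}$. Everything else — the compact space in which convergence is tested, and the reduction through \eqref{eq:limsupf} and Definition~\ref{def:mini} — is routine, and the same three-line skeleton handles all three choices of $\s$.
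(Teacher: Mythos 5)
Your argument is correct and follows essentially the same route as the paper's own (one-line) proof: by Definition~\ref{def:mini} and \eqref{eq:limsupf}, every subsequence of $\s(A_{g_n})$ has the same $\limsup$, which forces convergence. You merely spell out the elementary compactness/Hausdorff-theorem details that the paper leaves implicit, and those details are handled correctly.
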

\begin{Proof}
By Definition \ref{def:mini} and \eqref{eq:limsupf}, all subsequences of $\s(A_{g_n})$ have the same $\limsup$.
\end{Proof}
\begin{example} \label{ex:L-FSalg}
For $(A_n)\in\clos\alg\{P_nLP_n:L\text{ banded Laurent}\}$, see Example \ref{ex:Laurent}, already $\Stab(A_n)$ is of the minimal form \eqref{eq:Stabhmin} with limit operators $L_g=L$.
So $h=(h_n)=(n)$ is minimizing and all three spectral quantities $\s(A_n)$ converge, by Proposition \ref{prop:minlim}.
\end{example}
\begin{proposition} \label{prop:conv}
{\bf a) }
For $(A_n)\in\BDS$ and $\s\in\{\|\cdot\|,\|(\cdot)^{-1}\|,\Speps\}$, the quantity $\s(A_n)$ is convergent if and only if $\limsup \s(A_n)=\limsup \s(A_{g_n})$ for all minimizing sequences $g$ of $(A_n)$.

{\bf b) }
If $(A_n)\in\cS$, we have
\[
\begin{array}{rcl}
\limsup\|A_n\| &=& \max\limits_{g\in m(A_n)} \|\Stab_g(A_n)\|_\infty\,,\\
\limsup\|A_n^{-1}\| &=& \max\limits_{g\in m(A_n)} \|\big(\Stab_g(A_n)\big)^{-1}\|_\infty\,,\\
\limsup\kappa(A_n) &=& \Big(\max\limits_{g\in m(A_n)} \|\Stab_g(A_n)\|_\infty\Big)\cdot\Big(\max\limits_{g\in m(A_n)} \|\big(\Stab_g(A_n)\big)^{-1}\|_\infty\Big)\,,\\
\limsup\Speps A_n &=& \bigcup\limits_{g\in m(A_n)} \bigcup\limits_{B\in\Stab_g(A_n)}\Speps B.
\end{array}
\]
For the corresponding $\liminf$, replace $\max_g$ by $\min_g$ and $\cup_g$ by $\cap_g$.
\end{proposition}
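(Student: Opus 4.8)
The whole proposition is a consequence of the subsequence formulas of Section~\ref{sec:subseq}. I would rest it on four ingredients: the subsequence identity \eqref{eq:limsupf}, the monotonicity \eqref{eq:mon}, the existence of minimizing subsequences (Corollary~\ref{cor:minirich} for $(A_n)\in\cS$; for general $(A_n)\in\BDS$ one gets this by the same Bolzano--Weierstra\ss/diagonal reasoning, using only richness of $\oplus A_n$), and two elementary facts: every bounded real sequence has a subsequence converging to its $\limsup$ and one converging to its $\liminf$; and for a bounded sequence of nonempty \emph{closed} sets one has $\liminf T_n=\bigcap_h\limsup T_{h_n}$ over all monotonic $h$ in $\N$ (``$\subseteq$'' is clear; for ``$\supseteq$'', if $\dist(\lambda,T_n)\not\to0$, pick $h$ with $\dist(\lambda,T_{h_n})\ge\delta>0$, so $\lambda\notin\limsup T_{h_n}$). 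Each $\Speps A_n$ is closed and nonempty, so the latter applies to $T_n=\Speps A_n$.

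\textbf{Part a).} ``$\Rightarrow$'' is immediate: if $\s(A_n)$ converges, every subsequence --- in particular $\s(A_{g_n})$ with $g$ minimizing --- has the same limit. For ``$\Leftarrow$'' I argue by contraposition. Assume $\s(A_n)$ does not converge. If $\s\in\{\|\cdot\|,\|(\cdot)^{-1}\|\}$, then $\liminf\s(A_n)\prec\limsup\s(A_n)$, and I choose $h$ with $\s(A_{h_n})\to\liminf\s(A_n)$. If $\s=\Speps$, then $\clos\liminf\Speps A_n\subsetneq\limsup\Speps A_n$; I pick $\lambda_0$ in the difference, and since $\Speps A_n$ is closed, $\dist(\lambda_0,\Speps A_n)\not\to0$, so there is $h$ with $\dist(\lambda_0,\Speps A_{h_n})\ge\delta>0$, whence $\lambda_0\notin\limsup\Speps A_{h_n}$. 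In both cases $\limsup\s(A_{h_n})\prec\limsup\s(A_n)$. Thinning $h$ to a minimizing $g\subseteq h$ and using \eqref{eq:mon}, $\limsup\s(A_{g_n})=\s(\cB_g(A_n))\preceq\s(\cB_h(A_n))=\limsup\s(A_{h_n})\prec\limsup\s(A_n)$, contradicting the hypothesis.

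\textbf{Part b).} Here $(A_n)\in\cS$, so $\cB=\Stab$, $\cB_g=\Stab_g$. The pivotal set identity is $\Stab(A_n)=\bigcup_{g\in m(A_n)}\Stab_g(A_n)$: ``$\supseteq$'' holds since $g\subseteq(n)$ and by \eqref{eq:mon}; for ``$\subseteq$'', a $B\in\Stab(A_n)$ is, by Definition~\ref{def:Stab}, the pointwise limit of a subsequence (indexed by a monotonic $(n_k)$) of one of $A_n$, $S_{-n}A_nS_n$, $S_nA_nS_{-n}$; by Corollary~\ref{cor:minirich} I pass to a minimizing $g\subseteq(n_k)$, and the defining subsequence thinned to $g$ still converges to $B$, so $B\in\Stab_g(A_n)$ by Definition~\ref{def:Stab_h}. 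Hence, by Proposition~\ref{prop:limsupSpeps} and distributing the union, $\limsup\Speps A_n=\bigcup_{B\in\Stab(A_n)}\Speps B=\bigcup_{g\in m(A_n)}\bigcup_{B\in\Stab_g(A_n)}\Speps B$. For $\s\in\{\|\cdot\|,\|(\cdot)^{-1}\|\}$, \eqref{eq:mon} gives $\s(\Stab_g(A_n))\preceq\s(\Stab(A_n))$ for each $g$, while $\s(\Stab(A_n))$ is \emph{attained} (Propositions~\ref{prop:limsupAn} and~\ref{prop:limsupAn-1}, and \cite[\S6]{HagLiSei}) by some $B\in\Stab(A_n)$ lying in some $\Stab_g(A_n)$ with $g$ minimizing; so $\max_{g\in m(A_n)}\s(\Stab_g(A_n))=\s(\Stab(A_n))=\limsup\s(A_n)$. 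For the condition numbers: along any minimizing $g$, both $\|A_{g_n}\|$ and $\|A_{g_n}^{-1}\|$ converge (Proposition~\ref{prop:minlim}), to $\|\Stab_g(A_n)\|_\infty$ and $\|\big(\Stab_g(A_n)\big)^{-1}\|_\infty$ by \eqref{eq:limsupAhn}--\eqref{eq:limsupAhn-1}, hence so does $\kappa(A_{g_n})$, to their product; taking $g$ to be a minimizing refinement of a subsequence along which $\kappa(A_n)\to\limsup\kappa(A_n)$, and noting $\limsup\kappa(A_{g'_n})\le\limsup\kappa(A_n)$ for every minimizing $g'$, yields $\limsup\kappa(A_n)=\max_{g\in m(A_n)}\Big(\|\Stab_g(A_n)\|_\infty\cdot\|\big(\Stab_g(A_n)\big)^{-1}\|_\infty\Big)$. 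The $\liminf$ assertions are proved the same way, starting from subsequences converging to the $\liminf$, and for $\Speps$ combining $\liminf\Speps A_n=\bigcap_h\limsup\Speps A_{h_n}$ with the fact that every monotonic $h$ thins to a minimizing $g$ with $\limsup\Speps A_{g_n}\subseteq\limsup\Speps A_{h_n}$.

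\textbf{Main obstacle.} With the subsequence machinery of Section~\ref{sec:subseq} and the ``$\|\cdot\|_\infty$ attained'' results in hand, the remaining work is bookkeeping; the genuinely delicate point is the set-valued case, namely guaranteeing that replacing a $\limsup$- (resp.\ $\liminf$-) realizing subsequence by a minimizing one neither drops nor spuriously adds pseudospectral mass. This is exactly what the two detours handle: $\Stab(A_n)=\bigcup_g\Stab_g(A_n)$ for the $\limsup$, and the identity $\liminf T_n=\bigcap_h\limsup T_{h_n}$ (needing only that $\Speps A_n$ is closed) for the $\liminf$. A minor loose end is the $\BDS$ analogue of Corollary~\ref{cor:minirich} used in part a), which I expect to go through unchanged.
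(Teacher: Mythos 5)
Your argument follows the same route as the paper's: identify every partial limit of $\s(A_n)$ as the limit along some subsequence, thin that subsequence to a minimizing one via Corollary~\ref{cor:minirich}, and use the monotonicity \eqref{eq:mon} together with \eqref{eq:limsupf}; your two auxiliary observations (the identity $\Stab(A_n)=\bigcup_{g\in m(A_n)}\Stab_g(A_n)$ and $\liminf T_n=\bigcap_h\limsup T_{h_n}$ for closed $T_n$) are exactly the steps the paper leaves implicit, and both are correct. The loose end you flag in part a) --- existence of minimizing subsequences for general $(A_n)\in\BDS$, where Corollary~\ref{cor:minirich} is only stated for $\cS$ --- is equally unaddressed in the paper's own (one-line) proof of a), so you are not worse off there; note only that without it the ``for all minimizing $g$'' hypothesis could be vacuous.

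One discrepancy deserves explicit attention. For the condition numbers you prove
\[
\limsup\kappa(A_n)\ =\ \max_{g\in m(A_n)}\Big(\|\Stab_g(A_n)\|_\infty\cdot\|\big(\Stab_g(A_n)\big)^{-1}\|_\infty\Big),
\]
i.e.\ the maximum over $g$ of the \emph{product}, whereas the displayed formula in the proposition is the \emph{product of the two separate maxima}. These are not the same: by the first two lines of the proposition the latter equals $\limsup\|A_n\|\cdot\limsup\|A_n^{-1}\|$, and Example~\ref{ex:kappa}\,b) shows this can be strictly larger than $\limsup\kappa(A_n)$ (there $4N<16$). So the formula as printed cannot be correct, and the version you actually establish --- which is also what the paper's own proof yields, since it computes $\limsup\kappa(A_n)$ as the largest partial limit $\lim\|A_{g_n}\|\cdot\lim\|A_{g_n}^{-1}\|$ over minimizing $g$ --- is the right one. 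You should say so rather than silently proving a statement different from the one displayed.
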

\begin{Proof}
{\bf a) }
The implication $\boxed\Rightarrow$ is obvious and $\boxed\Leftarrow$ holds by monotonicity, \eqref{eq:mon}.

{\bf b) }
For $s\in\{\|\cdot\|,\|(\cdot)^{-1}\|,\kappa,\Speps\}$ and $(A_n)\in\cS$,
every partial limit $S$ of $s(A_n)$ is the limit of a subsequence $s(A_{h_n})$. By Corollary \ref{cor:minirich}, take a minimizing sequence $g\subseteq h$ for $(A_n)$ and note that
$S=\lim s(A_{h_n})=\lim s(A_{g_n})$. For $s=\kappa$, the latter is $\lim\|A_{g_n}\|\cdot\lim\|A_{g_n}^{-1}\|$ since all limits exist, by Proposition \ref{prop:minlim}. Now apply \eqref{eq:limsupf} and recall that $\limsup s(A_n)$ is, by definition, the largest such partial limit $S$, in the sense of the maximum or the set union, respectively.
\end{Proof}
\begin{corollary} \label{cor:div}
So the following are equivalent for $(A_n)\in\BDS$ and $\s\in\{\|\cdot\|,\|(\cdot)^{-1}\|,\Speps\}$:\\[-7mm]
\begin{enumerate}[label=(\roman*)\;] \itemsep-1mm
\item $\s(A_n)$ is divergent,
\item there exists a minimizing sequence $g$ of $(A_n)$ with $\limsup \s(A_{g_n})\prec \limsup \s(A_n)$,
\item there exists a minimizing sequence $g$ of $(A_n)$ with $\s(\cB_g(A_n))\prec \s(\cB(A_n))$\end{enumerate}
~\\[-7mm]
If $(A_n)\in\cS$, say \eqref{eq:(An)alg} holds, then all are equivalent to (iv): there is a monotonic sequence $g$ in $\N$, for which all limit operators $A^{(i,j,k)}_g$ and $A^{(i,j,k)}_{-g}$ exist and $\s(\Stab_g(A_n))\prec \s(\Stab(A_n))$. 
\end{corollary}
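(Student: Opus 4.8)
The whole argument rests on the identity \eqref{eq:limsupf}, which for the full sequence $h=(n)$ reads $\limsup\s(A_n)=\s(\cB(A_n))$ and for an arbitrary monotonic $g$ in $\N$ reads $\limsup\s(A_{g_n})=\s(\cB_g(A_n))$. Hence the equivalence of (ii) and (iii) is immediate: for a minimizing $g$, the relation $\limsup\s(A_{g_n})\prec\limsup\s(A_n)$ is, word for word, $\s(\cB_g(A_n))\prec\s(\cB(A_n))$. For ``(ii)$\Rightarrow$(i)'' I would note that any subsequence of $\s(A_n)$ has $\limsup$ at least $\liminf\s(A_n)$ --- for the real-valued $\s$ because $\liminf\s(A_n)\le\liminf\s(A_{g_n})\le\limsup\s(A_{g_n})$, and for $\s=\Speps$ because $\liminf\s(A_n)\subseteq\liminf\s(A_{g_n})\subseteq\limsup\s(A_{g_n})$ (limits of one sequence restrict to the subsequence). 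Combined with $\limsup\s(A_{g_n})\prec\limsup\s(A_n)$ and $\liminf\s(A_n)\preceq\limsup\s(A_n)$ this yields $\liminf\s(A_n)\prec\limsup\s(A_n)$, so $\s(A_n)$ cannot converge; for $\s=\Speps$ this is divergence by the Hausdorff theorem quoted in Section~\ref{sec:tools}.

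The substantive direction is ``(i)$\Rightarrow$(ii)''. Assume $\s(A_n)$ is divergent. If $\s\in\{\|\cdot\|,\|(\cdot)^{-1}\|\}$ then $\liminf\s(A_n)<\limsup\s(A_n)$ in $[0,\infty]$, in particular $\liminf\s(A_n)<\infty$, and I pick a monotonic $f$ in $\N$ with $\s(A_{f_n})\to\liminf\s(A_n)$, so that $\limsup\s(A_{f_n})<\limsup\s(A_n)$. If $\s=\Speps$ then, by the Hausdorff theorem, $\liminf\Speps A_n\subsetneq\limsup\Speps A_n$; I choose $\lambda\in\limsup\Speps A_n\setminus\liminf\Speps A_n$, then $\delta>0$ and a monotonic $f$ with $\dist(\lambda,\Speps A_{f_n})\ge\delta$ for all $n$ (possible since $\lambda\notin\liminf\Speps A_n$ means $\dist(\lambda,\Speps A_n)\not\to0$), so that no point of $\limsup\Speps A_{f_n}$ lies within $\delta$ of $\lambda$ and hence $\limsup\Speps A_{f_n}\subsetneq\limsup\Speps A_n$. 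In all cases $\limsup\s(A_{f_n})\prec\limsup\s(A_n)$. Now I pass to a minimizing subsequence $g\subseteq f$; by monotonicity \eqref{eq:mon}, $\s(\cB_g(A_n))\preceq\s(\cB_f(A_n))=\limsup\s(A_{f_n})\prec\limsup\s(A_n)=\s(\cB(A_n))$, and \eqref{eq:limsupf} turns this into (ii)/(iii).

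For $(A_n)\in\cS$ it remains to identify (iv) with (iii), where now $\cB_g=\Stab_g$ and $\cB=\Stab$. Given a minimizing $g$ with $\s(\Stab_g(A_n))\prec\s(\Stab(A_n))$, Corollary~\ref{cor:minirich} supplies a further subsequence $g'\subseteq g$ along which all $A^{(i,j,k)}_{g'}$ and $A^{(i,j,k)}_{-g'}$ exist; since $g$ is minimizing, $\Stab_{g'}(A_n)=\Stab_g(A_n)$, so $g'$ witnesses (iv). Conversely, if $g$ is monotonic with all $A^{(i,j,k)}_g$ and $A^{(i,j,k)}_{-g}$ existing, then for every $g'\subseteq g$ these limit operators still exist and coincide with those for $g$, so Proposition~\ref{prop:Stab_h} gives the same explicit set \eqref{eq:Stabhmin}, i.e.\ $\Stab_{g'}(A_n)=\Stab_g(A_n)$; hence $g$ is minimizing, and $\s(\Stab_g(A_n))\prec\s(\Stab(A_n))$ is exactly (iii).

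The main obstacle I anticipate is the existence, in the proof of ``(i)$\Rightarrow$(ii)'', of a minimizing subsequence $g\subseteq f$ for a \emph{general} $(A_n)\in\BDS$: Corollary~\ref{cor:minirich} only asserts this for $(A_n)\in\cS$. I would close the gap by running the same diagonal extraction as in Corollary~\ref{cor:minirich}, but on a sequence of band-operator approximants of the rich band-dominated operator $\oplus A_n$ on $\ell^p(\Z^2)$ (each such approximant's limit operators along a fixed direction being determined by finitely much local matrix data), so that some subsequence stabilizes $\cB_g(A_n)$. The only other points needing care are routine: the bookkeeping of closures for $\s=\Speps$, and the $[0,\infty]$-valued nature of $\|(\cdot)^{-1}\|$, which is what guarantees that ``divergent'' genuinely forces $\liminf\s(A_n)<\limsup\s(A_n)$ with $\liminf\s(A_n)<\infty$.
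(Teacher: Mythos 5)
Your proof is correct and follows essentially the same route as the paper: the corollary is just the contrapositive of Proposition~\ref{prop:conv}\,a) combined with \eqref{eq:limsupf} (for the equivalence of (ii) and (iii)), the monotonicity \eqref{eq:mon}, and Corollary~\ref{cor:minirich} (for (iv)), which is exactly what you unpack. The one issue you rightly flag --- that the existence of a minimizing subsequence $g\subseteq f$ is only proved in the paper for $(A_n)\in\cS$, so the implication (i)$\Rightarrow$(ii) for general $(A_n)\in\BDS$ rests on an unproven extraction --- is present in the paper as well (Proposition~\ref{prop:conv}\,a) cites only \eqref{eq:mon}); your sketched repair via band approximants of $\oplus A_n$ is not yet a complete argument, but identifying the gap is the right observation.
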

\begin{remark}
{\bf a) } Note that $\Stab_g(A_n)$ in Proposition \ref{prop:conv} b) and Corollary \ref{cor:div} $(iv)$ is of the simple form \eqref{eq:Stabhmin} with only three elements, say $\Stab_g(A_n)=\{A,B,C\}$, so that $\s(\Stab_g(A_n))$ is either $\max\{\|A\|,\|B\|,\|C\|\}$, $\max\{\|A^{-1}\|,\|B^{-1}\|,\|C^{-1}\|\}$ or $\Speps A\cup\Speps B\cup\Speps C$.

{\bf b) } If $\s(\Stab(A_n))=\s(A)$ then convergence of $\s(A_n)$ is clear without looking at subsequences, limit operators, etc. Recall, e.g., $\s=\|\cdot\|$ for pure finite sections, $A_n=P_nAP_n$.

{\bf c) } Convergence of $\Speps A_n$ implies that of $\|A_n^{-1}\|$. Indeed, putting $f_n(\lambda) := \mu(A_n-\lambda I_n)$ for $n\in\N,\,\lambda\in\C$, convergence of $\Speps A_n$ for all $\eps>0$ is equivalent to pointwise convergence of $f_n$, by \cite{LiSchmeck:Haus}, while convergence of $\|A_n^{-1}\|$ is just convergence of $f_n(0)$, by \eqref{eq:invmu}.

{\bf d) } The reverse implication of c) is not true, see the following example.
\end{remark}
\begin{example} \label{ex:shiftedflip}
Recall the symmetric block-flip $F$ from Example \ref{ex:symm_blockflip} with $\mu=0$
and put $A:=F+2I$, which is selfadjoint.
Then, with $F_n=P_nFP_n$ and $A_n=P_nAP_n$, $\ \Spec A_n=\Spec F_n+2$, which is $\{1,3\}$ if $n$ is even and $\{1,2,3\}$ if $n$ is odd.

Consequently, $\|A_n^{-1}\|=\mu(A_n)^{-1}=\dist(0,\Spec A_n)^{-1}=1$ for all $n$, which is constant (hence convergent), while
$\Speps A_n = \Spec A_n+\{z\in\C:|z|\le\eps\}$ differs between even and odd $n$ and does not Hausdorff-converge as $n\to\infty$.
\end{example}

\section{$\ell^p(\Z^d,Y)$: Banach space-valued $\ell^p$ over $\Z^d$ with $d\in\N,\ p\in[1,\infty]$} \label{sec:Banach}
\subsection{How to pass to $p\in\{1,\infty\}$}
A serious problem is that $P_n\not\to I$ pointwise if $p=\infty$, affecting the identification between operators and their matrix, the convergence $\|(P_n-I)T\|\to 0$ for compact operators $T$ on $\ell^\infty$, the somehow dual statement $\|T(P_n-I)\|\to 0$ on $\ell^1$, the whole interplay between matrix decay and compact operators with consequences reaching far into Fredholmness and limit operators for $\ell^1$ and $\ell^\infty$.  

The solution is to put the sequence $(P_n)$ in the first place and to change the notion of $\to$ such that $P_n\to I$ does hold, then change the set of compact operators $T$ such that $\|(P_n-I)T\|\to 0$ and $\|T(P_n-I)\|\to 0$ for all such $T$ and finally adapt the notion of Fredholmness to it. This major refurbishment is called $\cP$-theory \cite{RaRoSiBook}, and it received its ultimate polishing in \cite{Sei:Survey}. For $p\in(1,\infty)$, everything coincides with the classic theory, which is why we present that case above.

\subsection{Banach space-valued $\ell^p$ spaces}
Identifications like that of $L^p(\R)$ with the $L^p([0,1])$-valued $\ell^p(\Z)$ motivate to study spaces
\begin{equation} \label{eq:lpZY}
\ell^p(\Z,Y)\ :=\ \{(y_i)_{i\in\Z}\ :\ y_i\in Y,\ (\|y_i\|_Y)\in\ell^p(\Z)\}
\end{equation}
with a Banach space $Y$. Operators $A$ on such a space have matrices $(A_{ij})_{i,j\in\Z}$ with operator entries $A_{ij}$ on $Y$. Again, compactness of $A$ is only loosely related with decay properties of the matrix, as already one non-compact entry $A_{ij}$ can change everything. 
$\cP$-theory \cite{Sei:Survey} also comes to rescue here. Its redefinition of compact operators, convergence and Fredholmness is such that also $\ell^p(\Z,Y)$ smoothly integrates into the classic theory.

Exchanging $\ell^p(\Z)$ for $\ell^p(\Z, Y)$ in the previous sections requires two additional assumptions:\\[-7mm]
\begin{itemize}\itemsep-1mm
\item For many arguments we rely on the fact that $\Lim_h(A)\ne\varnothing$ for $A\in\BDO$ and a monotonic sequence $h$ in $\N$. The proof uses Bolzano-Weierstraß for the entries $A_{ij}$ and that requires scalar (or at least finite-dimensional) matrix entries. Indeed, $\Lim_h(A)=\varnothing$ can happen when $\dim Y=\infty$. To avoid this, restrict consideration to so-called {\sl rich} band-dominated operators $A$, literally imposing that $\Lim_h(A)\ne\varnothing$ for all $h$. Equivalently, $A\in\BDO$ is rich if and only if the set of all its entries $A_{ij}$ is relatively compact in $L(Y)$. 

\item Another property that comes automatic with $\dim Y<\infty$, see \cite{Globevnik,Shargorodsky08}, but has to be assumed additionally when $\dim Y=\infty$ is the continuity of the map $\eps\mapsto\Speps A$ for bounded operators $A$ on $X=\ell^p(\Z,Y)$. We rely on this property in Section \ref{sec:Speps}. It is shown to hold \cite{Globevnik,Shargorodsky08} for bounded linear operators $A$ on a Banach space $X$ that is finite-dimensional or complex uniformly convex or has a complex uniformly convex dual space. If we assume that $Y$ is uniformly $G$-convex in the sense of \cite{BoyKad} (and hence also complex uniformly convex) then, by \cite{BoyKad}, our $X=\ell^p(\Z,Y)$ inherits that property and we are back at continuity of $\eps\mapsto\Speps A$.
\end{itemize}
The last result, \cite{BoyKad}, is unfortunately only known to us for $p\in[1,\infty)$. So for our results on the asymptotics of $\speps A_n$ and $\Speps A_n$, we can (so far) either pass to $p=\infty$ or to $\dim Y=\infty$. For the rest of the paper both can happen at the same time.

\subsection{$\ell^p$-sequences over $\Z^d$ with $d\in\N$}
All our instruments \ci1-\ci9 are available and the arguments extend to $\ell^p$-spaces over $\Z^d$. The stacked operator $\oplus A_n$ then acts on $\ell^p(\Z,\ell^p(\Z^d))\cong \ell^p(\Z^{d+1})$.

As $P_n$ one could, for example, use multiplication by the characteristic function of the cube $(-n..n)^d$.  For $d=2$, the support pattern of $\oplus A_n$ is then an infinite upside down pyramid with square shaped layers. Besides $A$, the stability indicators $(c)$ and $(d)$ are limits along sequences $h={g_k\choose |g_k|_\infty}$ with $g_k\to\infty$ in $\Z^d$; that is, sequences $h$ on the surface of the pyramid in $\Z^{d+1}$. The counterparts of cases $(b),(e),(f)$ in Propositions \ref{prop:a-f1} and \ref{prop:a-f2} are again redundant.

We can still study the concept of minimal sequences $g$ but the corresponding set $\Stab_g(A_n)$ generally has uncountably many (instead of three) elements. This increase is not due to the method being inappropriate; instead, this growth is necessary to capture stability in higher dimensions. Already for $d=2$, a convolution operator $A$ and truncations $P_n$ to $(n\Omega)\cap\Z^2$ with a convex set $\Omega\subset\R^2$ with $0\in{\rm int}(\Omega)$, stability of the sequence $(P_nAP_n)$ is equivalent to invertibility of many compressions $P_UAP_U$ of $A$. For a polygon $\Omega$ with corners $v_1,\dots, v_k\in\Z^2$, the set $U$ needs to run through the $k$ limits, as $n\to\infty$, of $n(\Omega-v_j)$ for $j=1,\dots, k$, bringing us back to Kozak \cite{Koz}. For general $A\in\BDO$, one already has compressions $P_UA_hP_U$ of all kinds of limit operators $A_h$ with corresponding geometries $U$ that are no longer dominated by the $k$ operators with infinite cones $U$ corresponding to the corners of $\Omega$. For a disk $\Omega$, however, one cannot avoid looking at uncountably many compressions $P_UA_hP_U$ with half planes $U$.

\medskip

{\bf Acknowledgements.}
The authors thank Riko Ukena for helpful comments and discussions.

\vfill
\vfill
\noindent {\bf Authors' addresses:}\\
\\
Marko Lindner\hfill \href{mailto:lindner@tuhh.de}{\tt lindner@tuhh.de}\\
Dennis Schmeckpeper\hfill \href{mailto:dennis.schmeckpeper@tuhh.de}{\tt dennis.schmeckpeper@tuhh.de}\\
Institut Mathematik\\
TU Hamburg (TUHH)\\
D--21073 Hamburg\\
GERMANY
\end{document}